\numberwithin{equation}{section}
\newtheorem{thm}{Theorem}%[section]
\newtheorem{conjecture}[thm]{Conjecture}
\newtheorem{proposition}[thm]{Proposition}
\newtheorem{corollary}[thm]{Corollary}
\newtheorem{lemma}[thm]{Lemma}
\newcommand{\eff}{{\rm eff}}
\title{The random pseudo-metric on a graph defined via the zero-set of the Gaussian free field on its metric graph}
\author{Titus Lupu \and Wendelin Werner}
\address {
Institute for Theoretical Studies,
ETH Z\"urich,
Clausiusstr. 47,
8092 Z\"urich,
Switzerland}
\email
{titus.lupu@eth-its.ethz.ch}
\address{
Department of Mathematics,
ETH Z\"urich,
 R\"amistr. 101,
8092 Z\"urich, Switzerland}
\email
{wendelin.werner@math.ethz.ch}
\dedicatory {Dedicated to the memory of Marc Yor}
\begin{document}

\begin{abstract}

We further investigate properties of the Gaussian free field (GFF) on the metric graph associated to a discrete weighted graph (where the edges of the latter are replaced by continuous line-segments of appropriate length) that has been introduced by the first author. 
On such a metric graph, the GFF is a random continuous function that generalises one-dimensional Brownian bridges so that one-dimensional techniques can be used. 

In the present paper, we define and study the pseudo-metric defined on the metric graph (and therefore also on the discrete graph itself), where the length of a path on the 
metric graph is defined to be the local time at level zero accumulated by the Gaussian free field along this path.  

We first derive a pathwise transformation that relates the GFF on the metric graph with the reflected GFF on the metric graph via the pseudo-distance defined by the latter. This is a generalisation of Paul Lévy's result relating the local time at zero of Brownian motion to the supremum of another Brownian motion. We also compute explicitly the distribution of certain functionals of this pseudo-metric and of the GFF. 
In particular, we point out that when the boundary consists of just two points, the law of the pseudo-distance between them depends solely on the resistance of the network between 
them. 

We then discuss questions related to the scaling limit of this pseudo-metric in the two-dimensional case, which should be the conformally invariant way to measure 
distances between CLE(4) loops introduced and studied by the second author with Wu, and  by Sheffield, Watson and Wu. Our explicit laws on metric graphs also
lead to new conjectures for related functionals of the continuum GFF on fairly general Riemann surfaces.
\end{abstract}

%\keywords{conformal invariant growing mechanism; conformal loop ensemble; effective resistance; boundary Poisson kernel; extremal distance; Gaussian free field; metric graph; local set; local time; Riemann surface}
%\subjclass[2010]{60G60,60G15,60K35,60J55,60J65,60J67}

\maketitle
\tableofcontents

\section{Introduction}
\label{SecIntro}

\subsection*{A general question} 
When one is given a weighted graph (or an electric network), what are the natural random metrics or pseudo-metrics that one can define on them? 
This simple question is of course not new and has given rise to a number of works, and in the continuum, the definition of random metrics is sometimes described in theoretical physics under the name of 
quantum gravity (see eg. the references in \cite {DuplantierSheffield}). In the present paper, we will define and describe a simple and fairly natural random pseudo-metric on an electric network, point out some of its nice
features, and briefly discuss some conjectures about scaling limits, in particular in the two-dimensional case. 

\subsection*{One-dimensional local time background}
When $(\beta_{t})_{t\geq 0}$ is a standard one-dimensional Brownian motion started from $0$ and 
$(L_t)_{t\ge 0}$ denotes its local time at zero, 
then it is well-known (it is one of the many results by Paul L\'evy about Brownian motion, see for instance Theorem 2.3, chapter VI, § 2 in \cite{RevuzYor1999BMGrundlehren}), 
that the process $(\vert \beta_{t}\vert, L_t)_{t\geq 0}$ is distributed exactly like 
$(W_t - I_t, - I_t)_{t \ge 0}$ where $W$ is a standard Brownian motion and $I_t := \min_{s \in [0,t]} W_s$.  It can be viewed as a direct consequence of Tanaka's formula
$d \vert \beta_{t}\vert = \hbox {sgn} (\beta_{t}) d \beta_{t} + d L_t$;
Indeed, this shows that $W_t := \vert \beta_{t}\vert - L_t$ is a Brownian motion and it is easy to check that $- L_t = \min_{[0,t]} W$. This is also closely related to the 
so-called Skorokhod reflection Lemma (see also \cite{RevuzYor1999BMGrundlehren}).

Before explaining our generalisation of this result to general metric graphs, let us first describe the  special case of the segment $[0,T]$ (the previous Brownian case  corresponds 
in fact to the case where $T$ is infinite), which is due to Bertoin and Pitman \cite {BertoinPitman}. Instead of Brownian motion, we now consider $(W_t)_{t \in [0,T]}$ to be 
a Brownian bridge with $W_0 =w_0$ and $W_T = w_T$, where $w_0$ and $w_T$ are fixed and non-negative. For $t \in [0,T]$, we then define $I_t := \max ( \min_{[0,t]} W, \min_{[t,T]} W )$ (see Figure \ref {BBridge}).
We also consider another Brownian bridge $\beta$ on $[0,T]$ from $w_0$ to $w_T$, that we use to define the ``reflected Brownian bridge'' $|\beta|$. We can then define the local time at $0$ process $L$ of this 
reflected bridge (or equivalently of $\beta$). 
For each $t \in [0,T]$, we define $\delta_t := \min ( L_t, L_T - L_t ) = \min ( L [0,t], L[t,T])$.  
Then (see Theorem 4.1  in \cite {BertoinPitman}),
the process $( |\beta_t| - \delta_t )_{ t \in [0,T]}$ is distributed exactly like $W$ and the process $(\min ( I_t, 0))_{t \in [0,T]}$ is distributed like $-\delta$. 
Hence, we see that L\'evy's theorem has also a more symmetric version, and does not rely on the orientation of the real line. 
In this setting, it is therefore possible to construct a (non-reflected) Brownian bridge $W$ 
from a reflected Brownian bridge $|\beta|$ by using the latter's local time at $0$. 

\begin{figure}[ht!]
  \includegraphics[width=.8\textwidth]{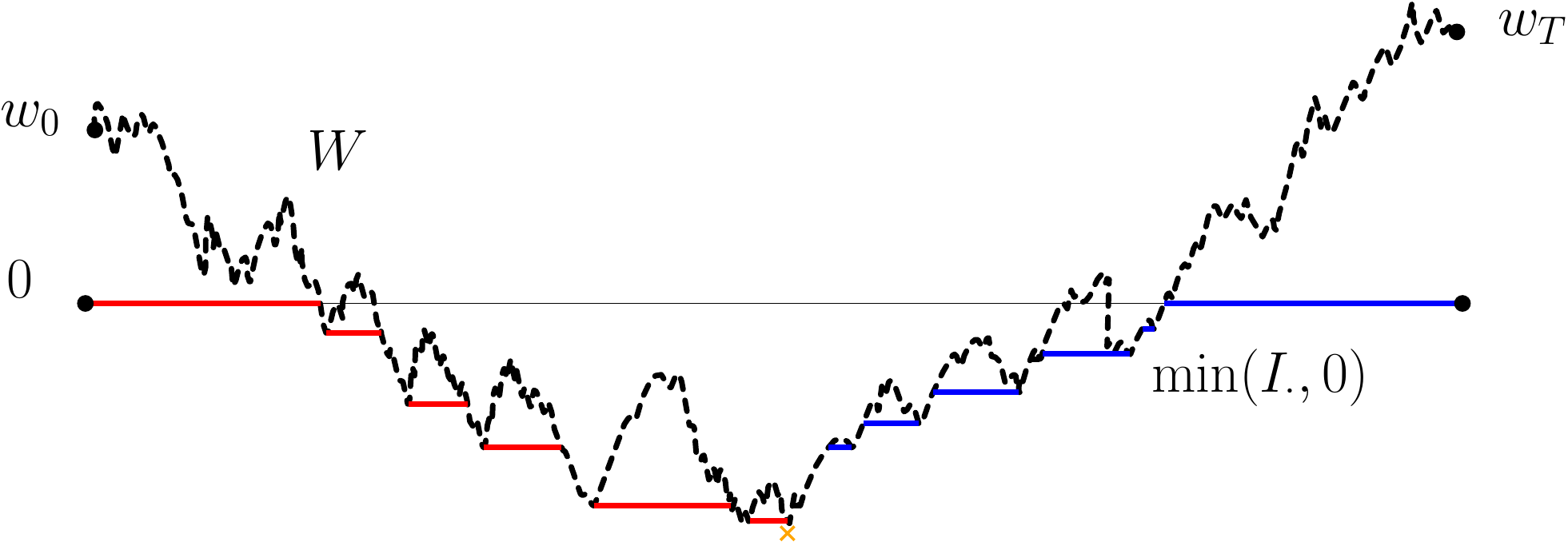}
  \vskip 5mm
  \caption{Sketch of a Brownian bridge $W$ (in dashed) and of the corresponding process $t \mapsto \min (I_t, 0)$ (in plain -- the two colors correspond to the portions before and after the minimum of the bridge). The generalisation of L\'evy's theorem by Bertoin and Pitman \cite {BertoinPitman} states that  the difference between the two is distributed like a reflected Brownian bridge $|\beta|$ from $w_0$ to $w_T$, and that conversely, one can recover $W$ from $|\beta|$. 
}
  \label{BBridge}
\end{figure}
Note that it is easy\footnote {Since this formula will be used throughout the paper, let us recall briefly a possible direct proof: For a Brownian motion $W$ started from $W_0$, one knows from the reflection principle \cite{RevuzYor1999BMGrundlehren} the joint law of $(W_T, I_T)$, from which one can deduce  the law of $(|W_T|, L_T)$ by L\'evy's theorem. We note that by reflection, the laws of $(W_T, L_T) 1_{L_T >0}$ and of $(-W_T, L_T) 1_{L_T >0}$ coincide while $L_T = 0$ implies that $W_T$ has the same sign as $W_0$. From this, one can deduce the joint law of $(W_T, L_T)$, and by conditioning on the value of $W_T$, one gets the law of $L_T$ for the Brownian bridge.} to compute the law of $L_T$, 
\begin{equation}
\label{EqLocTimeZeroBridge}
\mathbb{P}(L_T > \ell )=
\exp \left(-\frac{1}{2T}(\vert w_0 \vert+\vert w_T \vert + \ell )^{2}
+\frac{1}{2T}(w_0 -w_T)^{2}\right)
\end{equation}
for all $\ell \ge  0$ (see for instance \cite{BorodinSalminen1996HandbookBM}, page 155, Formula 1.3.8). 
Note that this formula holds regardless of the signs of $w_0$ and $w_T$. 
In particular, when $w_0$ and $w_T$ have the same sign, then the probability that $L_T$ is positive is 
$\exp(-2w_0 w_T /T )$ (which is the probability that the bridge touches $0$),  but $L_T$ is almost surely positive  when $w_0$ and $w_T$ have opposite signs.
A very  special case is  when $w_0 = w_T =0$, where $L_T^2 / (2T)$ is distributed like an exponential variable with mean $1$. 

\subsection*{GFF on metric graphs background} 
When one is given a finite connected undirected graph ${\mathcal G}$ consisting of a set of vertices $V$, a set of edges $E$ where each edge $e$ is also equipped with a positive and finite conductance $C(e)$, one can define 
a natural continuous-time Markov chain (that we will refer to as the continuous-time random walk) on the graph, and the related notion of harmonic functions. If one is given a non-empty subset $A$ of $V$, one can view $A$ as a boundary and one can then define the GFF with zero-boundary conditions on $A$ to be the centered Gaussian process on $V$, with covariance given by the Green's function of the random walk killed upon reaching $A$. 
More generally, when $h$ is a given function on $A$, one can define the GFF $\phi$ with boundary conditions $h$ on $A$ to be the sum of the aforementioned GFF with zero boundary conditions on $A$, with the deterministic harmonic extension of $h$ to $V$ (i.e., the function that is equal to $h$ on $V$ and is harmonic at all other vertices).

As explained and used in \cite{Lupu2014LoopsGFF,Lupu2015ConvCLE}, it can be very useful to respectively couple random walks and the GFF on such a discrete graph with Brownian motion and the GFF on the metric graph that is naturally associated to it (in particular when relating the GFF to the so-called loop-soups on those graphs).  
This metric graph is the structure $\widetilde {\mathcal G}$ that one obtains when one replaces formally each edge $e$ by a one-dimensional segment of length $R(e)=1/C(e)$. A point $x$ in the metric graph is 
therefore either a vertex in $V$ or a point on one of these segments. One can then easily define Brownian motion on this metric graph (loosely speaking, it moves 
like one-dimensional Brownian motion on the segments, and when at a vertex, it locally chooses to do excursions in each of the incoming segments uniformly). The trace of this Brownian motion on 
the sites of the graph (when parametrised by its local time at these sites) is then exactly the continuous-time random walk on the discrete graph, and the Green's functions on the metric graph does coincide with that of this continuous-time random walk, when looked at the vertices in $V$. 

In particular, this shows that one can define the GFF $\tilde \phi$ on the metric graph (with boundary $A$) in two equivalent ways: Either directly, as the Gaussian process with covariance function given by the Green's function of the Brownian motion on the metric graph. The field $\tilde \phi$ is then a strong Markov field on $\widetilde {\mathcal G}$ (\cite{Lupu2014LoopsGFF}).
 Or alternatively, by first sampling the discrete GFF $\phi$ on the discrete graph $\mathcal G$, which provides the value of $\tilde \phi$ when restricted to the sites of $V$. Then one has, for each edge $e$ to ``fill in'' the values of the GFF, using independent Brownian bridges (the time-length of the bridge is the length of this interval, and the values of the bridges at the end-points are given by the values of the GFF on the vertices).

As pointed out in \cite {Lupu2014LoopsGFF}, considering the metric graph allows to describe the conditional law of the GFF $\tilde \phi$ given its absolute value $|\tilde \phi|$ (or equivalently given its square $\tilde \phi^2$, which is a quantity that is naturally connected to loop-soups, see \cite{LeJan2011Loops}). Indeed, one can easily make sense of the excursions away from $0$ by $|\tilde \phi|$ on the metric graph, and conditionally on $|\tilde \phi|$, the signs of these excursions are just i.i.d. This makes it  very natural to study further what one can do using the local time at $0$ of $|\tilde \phi|$, which is of course closely related to these excursions. This is the purpose of the present paper, which will illustrate the fact that on metric graphs, a number of identities 
and tricks related to one-dimensional Brownian motions, bridges or excursions (see \cite {BertoinPitman,YorABM}) can be adapted and shed some light on important features of the GFF in higher dimensions. 

\subsection*{Some results of the present paper}
Let us consider a real-valued function $h$ on the boundary $A$,
and the GFF $\tilde \phi$ on the metric graph with condition $h$ on $A$.
\begin{figure}[ht!]
  \includegraphics[width=.68\textwidth]{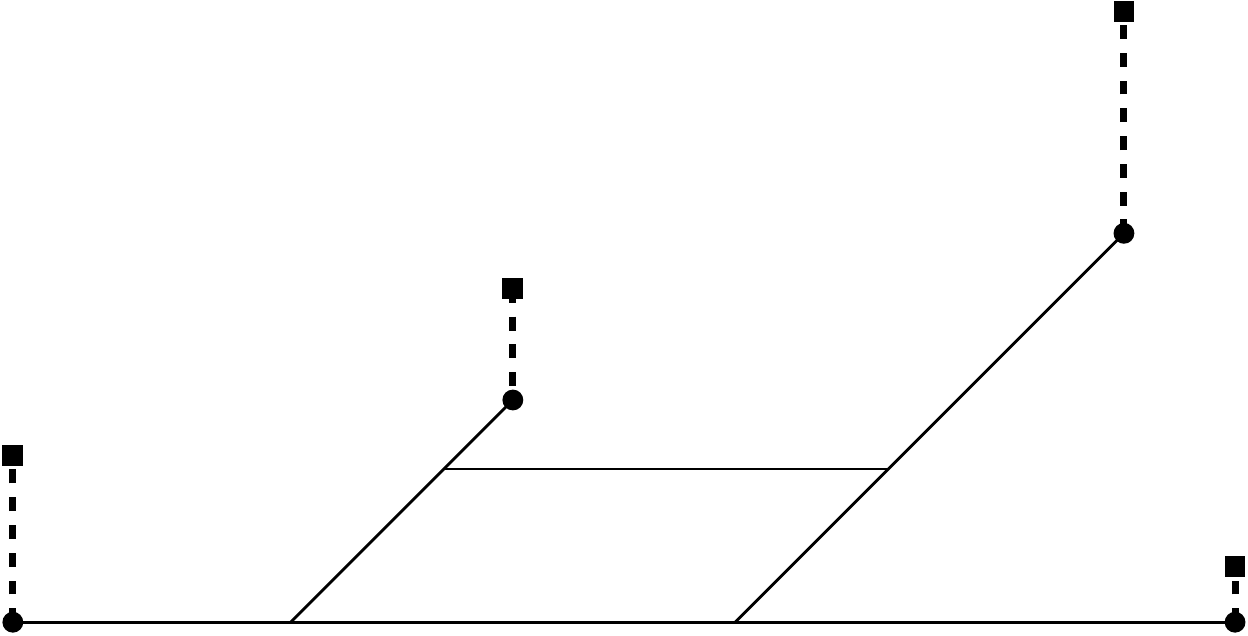}
  \caption{Sketch of a graph and positive boundary conditions at the four boundary points in $A$. {This is a picture in dimension $2+1$, and dashed lines and squares represent the height of the boundary conditions.}
}
  \label{GenLevy0}
\end{figure} 
On each edge, conditionally on the value of $\tilde \phi$ on the vertices, this process is a Brownian bridge, and it is therefore almost surely possible to define its local time at the level zero (on each of the edges).
We now use this local time to define the following pseudo-metric. Given $x$ and $y$ two points in the metric graph, the distance $\delta_{x,y}=\delta_{x,y}(\tilde{\phi})$ is the infimum over all continuous paths $\gamma$ joining $x$ to $y$ of the local time at $0$ accumulated along this path. Even though we will use the word ``distance'' throughout the paper, we stress that the mapping $(x,y) \mapsto \delta_{x,y}$ is in fact a pseudo-metric on the metric graph (and therefore also on the original graph): Two points are at distance zero from each other if and only if they 
belong to the same connected component of the non-zero set of $\tilde{\phi}$. For each $x$ in the metric graph, we consider the distance 
$\delta_{x,A}=\delta_{x,A}(\tilde{\phi})$ to the boundary set $A$ induced by the above pseudo-metric (again, mind that $\delta_{x,A}$ can be zero even if $x \notin A$).
Note that the quantities $\delta_{x,y}$ and $\delta_{x,A}$ can in fact be viewed as functionals of the absolute value of the GFF i.e. of the field $|\tilde \phi|$.

\begin{figure}[ht!]
    \includegraphics[width=.68\textwidth]{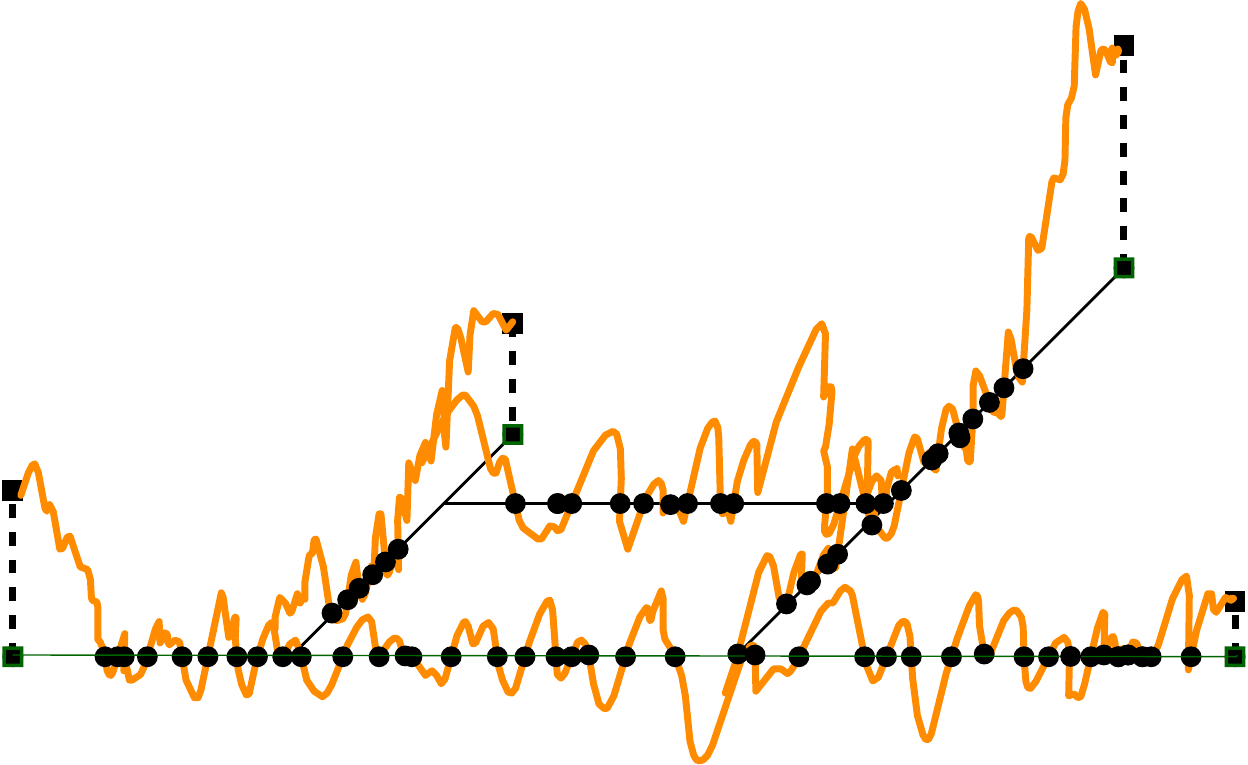} \vskip 5mm
  \caption{Sketch of a GFF and of the corresponding local time measure used to define the pseudo-distance $\delta$.}
  \label{GenLevy1}
\end{figure}

Similarly, when we are given the field $\tilde \phi$, we can define for each $x$ in the metric graph, the quantity $\tilde I_{x,A}$ to be the supremum over all 
continuous paths connecting $x$ to $A$ of the minimum of $\tilde \phi$ along this path, and then finally, $I_{x,A} = I_{x,A} ( \tilde \phi)  := \min (0, \tilde I_{x,A})$.
The processes $\delta_{x,A}$ and $I_{x,A}$ can be viewed as the metric graph generalisations of the local time process (of reflected Brownian motion and of the reflected Brownian bridge) and of the infimum process (of Brownian motion and Brownian bridges) that appear in L\'evy's result and its afore-mentioned generalisation to Brownian bridges.

\begin{figure}[ht!]
    \includegraphics[width=.68\textwidth]{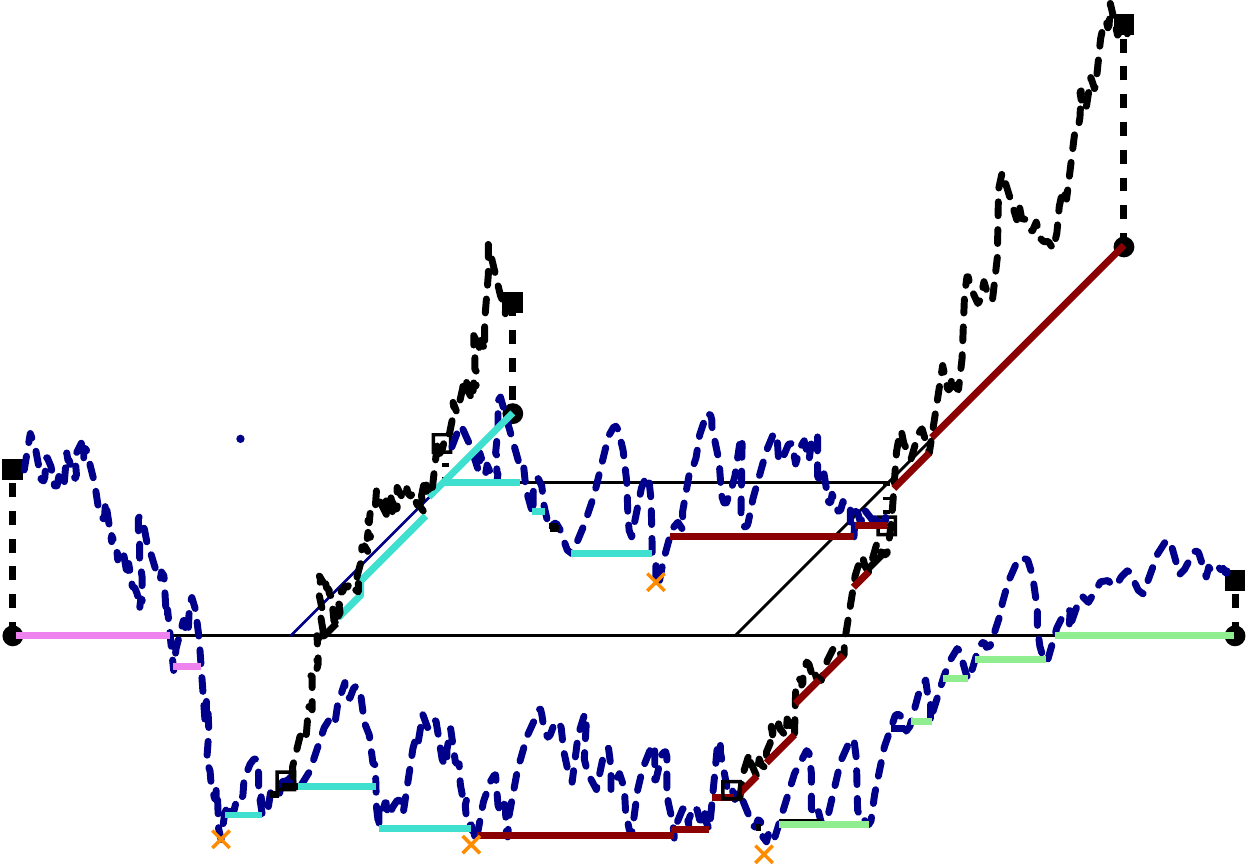} \vskip 5mm
  \caption{Sketch of a GFF (in dashed) with the function $x \mapsto I_{x,A}$ (in plain). Proposition \ref {thmGeneralisedLevy} says in particular that the difference between the two is a reflected GFF with the same boundary conditions, and that one can recover this picture from this reflected GFF. The different colors correspond to the ``basins of attraction'' of the four boundary points, and the crosses do separate these four regions.
}
  \label{GenLevy}
\end{figure}

The following generalisation of L\'evy's theorem to metric graphs will be the main result of Section \ref {S2} in the present paper:

\begin{proposition}
\label{thmGeneralisedLevy}
Consider a finite graph $\mathcal{G}$, a non-negative function $h$ defined on a non-empty subset $A$ of $V$, 
$\tilde \phi$ the metric graph GFF on $\widetilde{\mathcal{G}}$ with boundary condition $h$ on $A$, and define the processes $\delta$ and $I$ as before.
Then the  two pairs of processes
$(\vert\tilde{\phi}_{x}\vert,\delta_{x,A} )
_{x\in \widetilde{\mathcal{G}}}$ and $
(\tilde{\phi}_{x}-I_{x,A},
-I_{x,A} )_{x\in \widetilde{\mathcal{G}}}$
have the same law. 
\end{proposition}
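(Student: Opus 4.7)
My plan is to bootstrap the one-dimensional Bertoin-Pitman identity recalled in the introduction from individual edges to the full metric graph, using the strong Markov property of $\tilde\phi$ at the vertices of $\widetilde{\mathcal G}$.

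The key local observation is that both $\delta_{x,A}$ and $-I_{x,A}$ are ``edge-local'' once the vertex data are known. Explicitly, for an edge $e=[a,b]$ and $x\in e$, any continuous path from $x$ to $A$ exits $e$ through either $a$ or $b$, so
\begin{equation*}
\delta_{x,A} \;=\; \min\bigl(\delta_{a,A}+L^{0}_{[a,x]},\;\delta_{b,A}+L^{0}_{[x,b]}\bigr),
\end{equation*}
where $L^{0}_{[a,x]}$ denotes the local time at zero of the restriction of $\tilde\phi$ to $e$, accumulated on $[a,x]$. An analogous formula expresses $\tilde I_{x,A}$ in terms of $\tilde I_{a,A}$, $\tilde I_{b,A}$ and the running infima of $\tilde\phi$ on $[a,x]$ and $[x,b]$. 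At an interior vertex $v$, $\delta_{v,A}$ (respectively $\tilde I_{v,A}$) is obtained as the minimum (respectively maximum) over all incident edges of the corresponding endpoint contribution.

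With these formulas in hand, it suffices to establish the identity in law between the two finite collections $(|\tilde\phi_v|,\delta_{v,A})_{v\in V}$ and $(\tilde\phi_v-I_{v,A},\,-I_{v,A})_{v\in V}$: conditional on the respective vertex data, the edge restrictions are independent Brownian bridges, and on each edge Bertoin-Pitman --- applied with vertical shifts by $(\delta_{a,A},\delta_{b,A})$ on one side and $(-I_{a,A},-I_{b,A})$ on the other --- yields the identity on that edge. I would prove the vertex identity by induction on the number of interior vertices $|V\setminus A|$. The base case $|V\setminus A|=0$ reduces to a disjoint union of edges with both endpoints on the boundary, and Bertoin-Pitman applies directly. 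For the inductive step, pick an interior vertex $v_0$ and use the strong Markov property of the metric-graph GFF: conditioning on $\tilde\phi_{v_0}$ splits $\widetilde{\mathcal G}$ into independent GFFs on the subgraphs obtained by adding $v_0$ to the boundary, on which the inductive hypothesis applies.

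The main obstacle is the gluing at $v_0$: since $\delta_{v_0,A}$ and $-I_{v_0,A}$ are minima over all edges incident to $v_0$ of global functionals on the corresponding subgraphs, one must check that the edge-wise Bertoin-Pitman identities transport these minima consistently from the $(|\tilde\phi|,\delta)$-side to the $(\tilde\phi-I,-I)$-side. Tracking the ``basin of attraction'' of each vertex --- the boundary point $a\in A$ and the incident edge through which the optimal path exits --- should make this combinatorial bookkeeping work, with the almost sure uniqueness of the minimising edge (ties have probability zero under the GFF law) keeping the case analysis manageable.
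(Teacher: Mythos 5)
Your proposed route is genuinely different from the paper's (which sets up a stochastic calculus framework and uses a coordinated exploration from the boundary, synchronised by local time level), but as written it has at least one structural gap that I do not think can be patched without essentially reinventing the paper's device.

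The decisive problem is with the inductive step. You pick an interior vertex $v_0$, condition on $\tilde\phi_{v_0}$, and promote $v_0$ to a boundary vertex in each of the resulting subgraphs, on which you want to apply the induction hypothesis. But the induction hypothesis (the statement of Proposition \ref{thmGeneralisedLevy}) is only for \emph{non-negative} boundary data, and $\tilde\phi_{v_0}$ is a centred-like Gaussian that takes negative values with positive probability. When $\tilde\phi_{v_0}<0$ the hypothesis simply does not apply, and there is no symmetric reformulation of the proposition that handles mixed-sign boundary data with the same definitions of $\delta_{\cdot,A}$ and $I_{\cdot,A}$. The paper's exploration avoids this precisely because it moves the discovered boundary forward to points where the field takes the value $0$ (the exploration tips stop at zeros of $\tilde\phi$), so that the freshly exposed boundary automatically carries non-negative (indeed zero) data and induction can proceed. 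Choosing an interior vertex $v_0$ rather than a stopping set adapted to the local time at zero destroys this.

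There are two further, more localised gaps. First, the step ``conditional on the vertex data, apply Bertoin--Pitman with vertical shifts on each edge'' is not a valid operation. Conditioning on $(\delta_{v,A})_{v\in V}$ (or on $(-I_{v,A})_v$) conditions on much more than $(\tilde\phi_v)_v$, because $\delta_{v,A}$ involves edge local times; after this conditioning the edge restrictions are no longer independent Brownian bridges, and Bertoin--Pitman in the form of Lemma \ref{LemLevyBridge} cannot be invoked edge by edge. Writing the edge-local decomposition with the quantities $\delta^{e}_{v,A}$ (paths forbidden from re-entering the interior of $e$) instead of $\delta_{v,A}$ restores the desired conditional independence, but then the reduction to a ``vertex identity'' no longer takes the clean form you state. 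Second, the gluing at $v_0$ you flag as the ``main obstacle'' is indeed unresolved: both $\delta_{x,A}$ and $I_{x,A}$ couple the subgraphs through $\delta_{v_0,A}$ and $I_{v_0,A}$, which are minima and maxima over the subgraphs, and the suggested ``basin of attraction'' bookkeeping is precisely what one has to carry out and is where the a.s.\ uniqueness of minimisers is not enough by itself. The paper's proof performs exactly this bookkeeping, but only after replacing the static conditioning at a vertex by a dynamic exploration; the exploration-by-local-time-level Lemma \ref{LemExplor} is the device that makes the ``basins'' tractable.

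In short: the local edge decomposition you write down is correct, and the base case is fine, but the induction on interior vertices with conditioning on $\tilde\phi_{v_0}$ cannot work as stated because of the non-negativity constraint, and the edge-wise lifting is not a direct Bertoin--Pitman application. To make this line of attack rigorous you would be led naturally to condition on the first passage set of level zero rather than on a vertex, and at that point you have recreated the paper's exploration.
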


In particular, this shows that the field
$(\vert\tilde{\phi}_{x}\vert-\delta_{x,A} )_{x\in \widetilde{\mathcal{G}}}$
has the same distribution as $\tilde{\phi}$. 

\medbreak

In Section \ref {S3}, will then point out features of some functionals of the pseudo-metric 
 that we now briefly illustrate: Consider a partition of the boundary $A$ into two non-empty sets
$\widehat{A}$ and $\widecheck{A}$, and suppose that the boundary condition $h$ has a constant sign on each of
$\widehat{A}$ and $\widecheck{A}$ (it does not have to be the same sign on both sets). 
Then we can define the pseudo-distance between $\widehat A$ and $\widecheck A$ 
to be the infimum over all paths that join a point of $\widehat A$ to 
a point in $\widecheck A$, of the cumulative local time along this path. It turns out that its law
is invariant if we modify the network $\mathcal{G}$ into an electrically equivalent circuit, seen simultaneously from all boundary points in $A$. A subcase of the 
general result (Proposition \ref {thmDist2Sets}) that will be given in Section \ref {S3} goes as follows:

\begin {proposition}
\label{PropLaw2Points}
 When $A$ contains only two points $\hat{x}$ and $\check{x}$, then the law of the
distance $\delta_{\hat{x},\check{x}}$ depends only on the {\em effective resistance} 
$R^{\rm eff}(\hat x,\check x)$
of the electric network between $\hat x$ and $\check x$.
This law is therefore given by Formula \eqref {EqLocTimeZeroBridge}, where $w_0 := h( \hat x)$, 
$w_T := h (\check x)$ and $T:= R^{\rm eff}(\hat x,\check x)$.
\end {proposition}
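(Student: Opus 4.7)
Both assertions reduce to a single invariance principle: the law of $\delta_{\hat x,\check x}$ depends on $\mathcal{G}$ only through the effective resistance $T:=R^{\rm eff}(\hat x,\check x)$. Given this, one may replace $\mathcal{G}$ by the two-vertex network consisting of $\hat x$ and $\check x$ joined by a single edge of resistance $T$; on the resulting metric graph $\tilde\phi$ is literally a Brownian bridge on $[0,T]$ from $w_0$ to $w_T$, so $\delta_{\hat x,\check x}$ is exactly its total local time at zero, whose distribution is given by \eqref{EqLocTimeZeroBridge}. The second assertion therefore follows from the first.

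To establish the invariance, I would verify that the three classical electrical reductions (parallel, series, and star-to-mesh) each preserve the law of $\delta_{\hat x,\check x}$. Since iterating them collapses any finite two-terminal electric network to a single edge of resistance $T$, the invariance will follow. The parallel case is direct: for two edges $e,e'$ between $v$ and $v'$ of conductances $C,C'$, conditionally on $(\tilde\phi_v,\tilde\phi_{v'})$ the two bridges are independent, the pseudo-distance restricted to this edge pair equals $\min(L_e,L_{e'})$ (any back-and-forth excursion only adds local time), and applying \eqref{EqLocTimeZeroBridge} on each edge shows that $\mathbb{P}(\min(L_e,L_{e'})>\ell\mid\tilde\phi_v,\tilde\phi_{v'})$ coincides with the corresponding tail on a single edge of conductance $C+C'$. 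The series case is equally direct: an interior degree-two vertex $v$ between $v',v''$ is eliminated by conditioning on $(\tilde\phi_{v'},\tilde\phi_{v''})$ and averaging over the Gaussian $\tilde\phi_v$; the two bridges then concatenate into a single Brownian bridge of length $T'+T''$, whose total local time at zero equals the sum $L'+L''$ contributed by the two original edges to $\delta_{\hat x,\check x}$.

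The main obstacle is the star-to-mesh step: for an interior vertex $v$ of degree $k$ with neighbours $v_1,\dots,v_k$ and conductances $C_1,\dots,C_k$, one must show that replacing the star at $v$ by the mesh on $\{v_1,\dots,v_k\}$ with conductances $C_iC_j/\sum_l C_l$ preserves the law of $\delta_{\hat x,\check x}$. The Gaussian marginals on $(\tilde\phi_{v_i})$ match by the standard star-mesh identity for Green's functions, so the content is a local-time identity: the joint law of the pseudo-distances $\{\delta_{v_i,v_j}\}_{i<j}$ on the star must agree with that on the mesh. A reasonable route is to treat $k=3$ first via a three-bridge excursion decomposition at $v$, then reduce general $k$ by successive partial star-mesh reductions on subsets of neighbours. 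Alternatively, one can bypass this difficulty altogether by first establishing a joint ``generalised Lévy'' identity for the pseudo-distance between two disjoint boundary sets (as in the forthcoming Proposition \ref{thmDist2Sets}) and specialising to $A=\{\hat x,\check x\}$, in which case the present proposition becomes an immediate corollary.
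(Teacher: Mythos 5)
The reduction of the second assertion to the first is fine, as is the handling of parallel and series edges (given the endpoint values $(\tilde\phi_v,\tilde\phi_{v'})$, the parallel pair's contribution is the scalar $\min(L_e,L_{e'})$ whose conditional tail matches a single edge of conductance $C+C'$, and the series case concatenates cleanly). The fatal gap is in the star-to-mesh step, and it is not merely a technical difficulty that you have deferred: the specific claim you propose to prove there --- ``the joint law of the pseudo-distances $\{\delta_{v_i,v_j}\}_{i<j}$ on the star must agree with that on the mesh'' --- is \emph{false}, and the paper explicitly disproves it. In Section~3, after Proposition~\ref{reformulation}, the authors take the three-legged star with constant boundary value $a$ on $\{\hat x_1,\hat x_2,\check x\}$ and compare with the electrically equivalent triangle: on the star, $\mathbb{P}\bigl((\delta_{\hat x_1,\check x},\delta_{\hat x_2,\check x})=(0,0)\bigr)$ decays like $a^3$ as $a\to 0$ (the GFF must stay positive on each of the three legs, each starting near one boundary point), whereas on the triangle it decays like $a^4$ (two edges each need to stay positive near both endpoints). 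So the joint conditional law of the pseudo-distances is not invariant under star-triangle, and the paper goes on to show that even a ``delayed'' version $\delta^u$ fails, precisely to rule out any proof via conditioning-plus-local-rewiring. Your star-mesh step, as formulated, cannot be carried out.

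Only the \emph{minimum} of these pseudo-distances is invariant, and that is exactly the content of Proposition~\ref{thmDist2Sets}. The paper obtains Proposition~\ref{PropLaw2Points} as a special case of that proposition, and proves Proposition~\ref{thmDist2Sets} not by local rewiring but by an exploration started simultaneously from the whole boundary at equal local-time $\delta$-distance, so that the remaining-to-be-discovered graph always carries a constant boundary condition (no residual ``delay'' $u$), coupled with the explicit martingale $\Psi_\ell$ of Lemma~\ref{LemMartingale} and an induction on the number of edges. Your closing remark --- ``bypass this difficulty altogether by first establishing \ldots Proposition~\ref{thmDist2Sets}'' --- is indeed what the paper does, but as stated it is a deferral rather than a proof: you would still need the exploration-martingale machinery (Lemmas~\ref{LemSDE}, \ref{LemExplor}, \ref{LemMartingale}), and that is where the actual work lies.
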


However, as we shall point out, it is easy to see that for the joint law of the distance between more than two boundary points, there is no such simple identity. 
Indeed, the laws of joint distances between three points turn out not to be 
invariant under local {star-triangle transformations (also known as Y-$\Delta$ transformations)} which do however preserve all resistances between boundary points.
 
It is also possible to use this pseudo-distance $\delta$ to define and study interesting random sets. 
Suppose that $h$, $\widehat A$ and $\widecheck A$ are as before. For each $a<\min_{\widehat{A}} h$, define
$\widetilde{\Lambda}_{a}$ to be the set of points $x$ in the metric graph, for which there exists a continuous path joining $x$ to $\widehat{A}$ such that
$\tilde{\phi}(z)\geq a$ along the path. 
%\begin{equation}
%\label{EqFirstHittingSet}
%\widetilde{\Lambda}_{a}:=\left\lbrace z\in\widetilde{\mathcal{G}}\Big\vert\exists \gamma~\text{a continuous path from}~z~
%\text{to}~\widehat{A},~\text{s.t.}~\inf_{\gamma}\tilde{\phi}\geq a
%\right\rbrace.
%\end{equation}
Note that $\widetilde{\Lambda}_{a}$ is a random compact connected set containing $\widehat{A}$. 
We can call it \textit{the first {passage} set of level} $a$ from $\widehat A$. It is an
\textit{optional set} for the metric graph GFF $\tilde{\phi}$, that is to say that given $K$ a deterministic compact subset of $\widetilde{\mathcal{G}}$, the event $\widetilde{\Lambda}_{a}\subseteq K$ is measurable with respect the restriction of $\tilde{\phi}$ to $K$ (this is also closely related to the fact that 
$\widetilde{\Lambda}_{a}$ is a \textit{local set} for the GFF
$\tilde{\phi}$, see
\cite{SchrammSheffield2013ContourContGFF, AruSepulvedaWerner2016BoundedLocSets,Rozanov1982MarkovRandomFields}).

\begin{figure}[ht!]
  \centering % gives better spacing than \begin{center}...\end{center}
  \includegraphics[width=.6\textwidth]{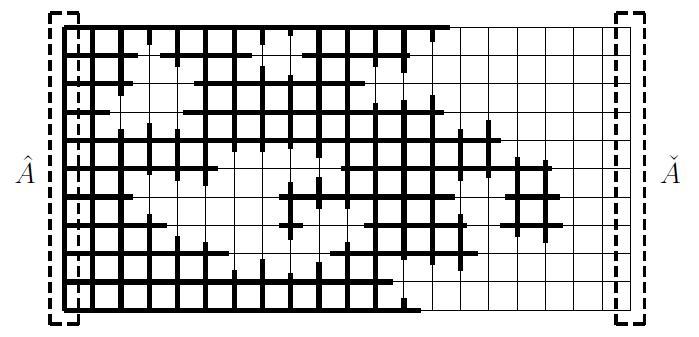}
  \caption{
Representation of $\widetilde{\Lambda}_{a}$, in thick lines. 
Boundary components are surrounded by dashed lines.
}
  \label{figLocalSet}
\end{figure}

We will study the distribution of the 
effective resistance $R^{\rm eff}(\widetilde{\Lambda}_{a},\widecheck{A})$ between the subsets
$\widetilde{\Lambda}_{a}$ and $\widecheck{A}$ in the electrical network $\widetilde{\mathcal{G}}$.
Note that this resistance is zero if and only if $\widetilde{\Lambda}_{a}$ intersects 
$\widecheck{A}$, and that only the connected components
of $\widetilde{\mathcal{G}}\setminus \widetilde{\Lambda}_{a}$ that intersect $\widecheck{A}$ matter to determine $R^{\rm eff}(\widetilde{\Lambda}_{a},\widecheck{A})$.
We will for instance  compute the Laplace transform of the effective conductance $1/ R^{\rm eff}(\widetilde{\Lambda}_{a},\widecheck{A})$ in the case where $h$ is constant on $\widecheck{A}$.

\subsection*{Further comments on the two-dimensional case}
While all the above considerations are not restricted to any particular dimensions (and in a way, this is one punchline of the present setup), one of the motivations for the present work comes from the very special two-dimensional case and the study of the GFF in 
continuous two-dimensional domains. Recall that in two dimensions, conformal invariance is a powerful additional property of the GFF. A lot of geometric properties and features of the 
GFF have been recently obtained using SLE, and some of them turn out to be closely related to properties of Brownian loop-soups 
(see e.g. \cite {SchrammSheffield2013ContourContGFF,SheffieldWerner2012CLE, MillerSheffieldCLE4GFF,MillerSheffieldIG1,QW} and the references therein). 
These results have been essentially obtained 
for the GFF in simply connected domains (where SLE tools are more easily applicable), and by local absolute continuity, they also provide some results (existence of level lines etc) on more general surfaces. Explicit formulas (for probabilities of events for instance) have also been produced in simply connected domains, using SLE techniques, but these appear 
difficult to extend to more general surfaces. 

The GFF on metric graphs approach has recently been shown to provide an alternative and useful approach to level lines of the continuum GFF in two dimensions (see \cite {Lupu2015ConvCLE} and also \cite {QW}).

The results of the present paper do involve quantities (typically the effective resistance) that have nice conformally invariant counterparts in the appropriate scaling limits. 
Indeed, taking the formal scaling limit of our computations in the simply connected case leads to known results for the continuous GFF. In Section \ref {S4}, we will: 
\begin {itemize}
 \item Briefly review the relation between the GFF on metric graphs, loop-soups and the conformal loop ensembles CLE$_4$ and the continuous GFF. 
 \item Explain why the scaling limit of our pseudo-distance should be the distance between CLE$_4$ loops that has been recently introduced and studied by Sheffield, Watson and Wu \cite {SheffieldWatsonWuMetricCLE4} and by Werner and Wu \cite {WernerWu2013ExplorCLE}, and why this could provide an alternative approach to some of its properties. 
 \item Give examples of formulae that conjecturally hold for the GFF (and the scaling limit of the pseudo-distance) on bordered Riemann surfaces, that seem out of the scope of SLE techniques. Here, \textit{extremal distance} (see \cite{Ahlfors2010ConfInv}, Section 4, and \cite{Duffin1962ELRes}) is (not surprisingly) the natural conformal invariant that appears in the scaling limit to describe these conjectural laws.  
\end {itemize}

\section{Generalisation of Lévy's theorem to metric graph GFF}
\label{SecLevy}
\label{S2}

\subsection {Approach by discretisation}

In this section we will establish Proposition \ref{thmGeneralisedLevy}. We will do this with a stochastic calculus-type proof, as 
will also smoothly lead to our further results. However, in the present subsection, we first describe informally another possible proof, based on a discretisations. One goal of this informal 
discussion is to explain that the generalisation of L\'evy's theorem should by no means be thought of as surprising (however, as the reader will probably realise, turning this informal description 
into a rigorous proof would require some technical work). Another goal is to introduce some of the ideas (such as simultaneous explorations of the values of the field when starting from the boundary) that will be used in the continuous setting. 

Let us briefly recall the following approach to L\'evy's theorem via discretisation. When $(S_n, n \ge 0)$ denotes a simple random walk, one can define the hitting times $\tau_{-m}$ of negative integers $-m$. 
Then, one can decompose the path of $S$ into i.i.d. pieces $(U^m, m \ge 1)$, where $U^m$ denotes the portion of $S + m$ between $\tau_{1-m}$ and $\tau_{-m}$. They are distributed each like a simple random walk started from $1$ and stopped at their 
first hitting of $0$. Hence, if we define $\widetilde U^m$ to be $U^m$ with an additional first upwards step from $0$ to $1$, and then concatenates these $U^m$, one gets exactly a reflected random walk  (i.e., distributed like $|S|$). Conversely,
starting from this reflected random walk, one can easily recover $S$ by erasing all its upwards moves from level $0$ to level $1$, and then concatenating the obtained pieces. Hence, in the discrete case, one has in fact a bijective transformation between the 
reflected random walk and the random walk. When one lets the number of steps grow to infinity (and renormalises appropriately, so that the random walk approaches a Brownian motion), then it is not difficult to see that this transformation 
converges to the identity in L\'evy's theorem (indeed, the appropriately renormalised number of visits to the origin by the reflected random walk will converge to the local time at the origin of the limiting Brownian motion). 

If one now wants to use this idea to derive the result for the one-dimensional Brownian bridge on a single segment $[0,T]$ that we described in the introduction, one can use the very same idea, ``discovering'' the random walk bridge from 
its two ends simultaneously. This time, the different pieces of the random walk will not be independent anymore, but one still has a one-to-one correspondence between a random walk bridge and a reflected random walk bridge. 
A new apparent difficulty arises because the total number of steps of the random walk bridge (say $2N$) is not the same as the total number of steps of the reflected bridge (indeed, one has added a certain number of forward upward 
moves from $0$ to $1$, and of the same number of backward upward moves from $0$ to $1$). However, when $N \to \infty$, this number will be of order $N^{1/2}$, and one can therefore resolve this problem by first choosing $N$ uniformly at random in $[n , n + n^{2/3} ]$ and then let $n$ tend to infinity:  
In this way, one has, for each $n$, a bijective correspondence between a random walk bridge of length $2N$, and a random path, which (with a total variation probability $1+ o(1)$ as $n \to \infty$) is close 
to a reflected random walk bridge of length $2N'$ (where $N'$ is distributed like $N$). Letting then $n$ tend to infinity, and using the invariance principle (and the local central limit theorem, so that the discrete 
bridges converge to Brownian bridges), one can fairly easily derive the generalisation of L\'evy's theorem in the case of a single segment. 

A generalisation to the case of metric graphs as stated in Proposition \ref {thmGeneralisedLevy} can be  obtained along similar lines. One divides for instance each segment  $e$ of the metric graph into a large number of 
discrete edges chosen uniformly in $[R(e) n, R(e) n + n^{2/3} ]$, with the constraint that every cycle in the graph must have even length. The approximation of the Gaussian field on the metric graph, by the (renormalised) discrete integer-valued field on 
this discrete graph is then the uniform measure on all integer-valued functions $f(x)$ on the discretised graph, with the constraint that the values of $f$ at any two neighbouring points always differ exactly by one, and with prescribed even values at the 
boundaries. The local central limit theorem also ensures that (when $n$ becomes large), the probabilities of two given possible outcomes does in fact not depend much on the exact outcome of the number of edges. 

Then, again, by discovering the values of this discrete field starting from the boundary, one gets for each $n$ a bijection between the pair $($discretisation of the graph, discrete field$)$ and  a pair given by 
$($discretisation of the graph, reflected discrete field$)$, 
where the latter is (in absolute variation) close to the former where one just changes the field into its absolute value. 
Letting $n$ tend to infinity, and using the invariance principle, the local central limit theorem as well as the fact, one can obtain Proposition~\ref {thmGeneralisedLevy}. 
Of course, we have omitted here quite a number of technical steps in this informal description.

\subsection {Some notations and preliminary remarks}

In the present subsection, we make a few elementary remarks concerning electric networks and their relations to GFF, and we also put down some important notation. 
When one considers a finite electric network (i.e. here, only a graph where the edges $e$ have conductances $C(e) = 1 / R(e)$), and one chooses two vertices $x_1$ and $x_2$, then one can define 
the equivalent or effective conductance $C^{\rm eff} (x_1, x_2)$ of the network between these two points. This means that for all purposes that do only involve the quantities related to $x_1$ and $x_2$, the network behaves exactly like the network with just one edge with conductance $C^{\rm eff} (x_1, x_2)$ joining $x_1$ to $x_2$ (see eg. \cite {LyonsPeres} for background on electric networks and their relations to random walks).

When we are now given a set $F$ of vertices in the graph that contains more than two elements, it is also easy to see that there exists a unique symmetric matrix $(C(x,y))$ indexed by $(x,y)$ in $F \times F$, such that for all purposes that involve quantities related to the vertices in $F$, the network behaves exactly like the network consisting of the sites $F$ and edges joining 
these sites in such a way that the conductance of the edge between $x$ and $y$ is $C(x,y)$ (with the constraint that $C(x,x)$ is always zero). We will refer to this matrix as the effective 
conductance matrix of $F$ in this network, and we will denote it by $(C_F^{\rm eff} (x,y))$. It should be stressed that in general, the quantity $C^{\rm eff}_F (x,y)$ is not equal 
to the previous ``two-point'' effective conductance $C^{\rm eff} (x,y) = C^{\rm eff}_{\{ x,y\}} (x,y)$.

Some readers may actually probably prefer to think of the matrix $C_F^{\rm eff}$ as a boundary excursion kernel (and this interpretation will indeed be useful later). Indeed, one can define the natural Brownian excursion measure $\mu_F$ away from $F$ in the metric graph as the limit when $\varepsilon \to 0$ of the sum over all points $y$ located exactly at distance $\varepsilon$ of $F$, of $\varepsilon^{-1}$ times the law of Brownian motion on the metric graph, started at $y$ and stopped at its first hitting of $F$. This is then an infinite measure on Brownian paths in the metric graph, that start on $F$ and end on $F$, and stay away from $F$ during the interior of their life-time. The mass of the excursions that start and end at the same point $x \in F$ is infinite, but the mass of the excursions that start at $x$ and end at another point $y \neq x$ in $F$ is finite. It is an easy exercise (one way would be to notice that it is obvious in the case where there are no other vertices than $F$ in the graph, and that this mass does not change under local star to complete graph transformations) to check that this mass is in fact 
exactly $C^{\rm eff}_F (x,y)$. Hence, $C^{\rm eff}_F (x,y)$ is in fact really the metric graph version of the boundary Poisson excursion kernel in the metric graph with boundary $F$, and the notation 
{$H_F (x,y)$} could be equally appropriate for it.

This effective conductance matrix is very closely related to the GFF. For instance, one can consider the previous electric network with a non-empty boundary $A$, and a 
real-valued function $h$ defined on $A$. It is then possible to define the GFF $\tilde \phi$ in the graph, with boundary condition $h$ on $A$. 
We then consider another finite set $B = \{ z_1, \ldots, z_n \}$ of other distinct points in the graph, and we would like to describe the joint law of $\tilde \phi$ on $B$. Then, it is easy to see that the Gaussian density of 
$(\tilde \phi (z), z \in B)$ will be described via the effective conductance matrix $C_{A \cup B}^{\eff}$, and more precisely, that it will be proportional to 
\begin {equation}
 \label {partitionfunction}
\prod_{x \in A, z \in B }\exp \left(  {C^{\rm eff}_{A \cup B} ( x,z)} \frac { ( \tilde \phi (z) - h(x))^2} 2 \right) \times \prod_{1 \le i < j \le n }  
\exp \left(  {C^{\rm eff}_{A \cup B} ( z_i,z_j)} \frac {( \tilde \phi (z_i) - \tilde \phi (z_j) )^2}2 \right).
\end {equation}

We are now going to  use natural ways to discover the GFF on the metric graph, by exploring it starting from part of the boundary.
This strategy is again reminiscent of the Schramm-Sheffield theory of local sets  as in \cite {SchrammSheffield2013ContourContGFF, MillerSheffieldIG1, MillerSheffieldQLE}. But of course, in the present metric graph setting, one is in the realm of usual stochastic calculus. 
The idea will again be to consider multiple particles evolving on the metric graph, all starting from the boundary, in such a way that
at any given time, all particles  will be at the same $\delta(\tilde{\phi})$ distance from the boundary.
The particles will progressively discover the GFF. We will write the stochastic differential equation for
$\tilde{\phi}_{\bullet}$ and for 
$\vert\tilde{\phi}_{\bullet}\vert-\delta_{\bullet,A}(\tilde{\phi})$, 
were $\bullet$ stands for the multiple evolving particles, and check that it is the same in both cases.

Let us briefly recall the setup: 
We consider a finite graph $\mathcal{G}=(V,E)$ and $A$ a non-empty subset of $V$. We suppose that $E$ contains no edges that joins a site of $A$ directly to another site of $A$. 
Let $e_{1},\dots,e_{n}$ be the edges adjacent to $A$ i.e. that connect a site of $A$ to a site in $V\setminus A$. $x_{i}$ will denote the endpoint of $e_{i}$ in $A$, $y_{i}$ the endpoint in $V\setminus A$. 
For $r_{i}\in[0,R(e_{i})]$,
$z_i := z_{i}(r_{i})$ will denote the point in the edge $[x_{i},y_{i}]$ lying at distance $r_{i}$ from $x_{i}$.
For this collection $r_i$, we define the set $B:= \{ z_i, 1 \le i \le n\}$, and we look at the effective conductance matrix of $A \cup B$. 
The construction shows immediately, that $C^{\eff}_{A \cup B} (z_i, z_j)$ for $i < j$ depends only on the new electric network that is obtained by simply erasing all the 
portions $[x_i, z_i]$ (and it is then just $C^{\rm eff}_B$ for this network). 
When $A$ is considered to be fixed once and for all, we will write  
$$ C_{ij}^{\rm eff} ( r_1, \ldots, r_n ) := C^{\eff}_{A \cup B} (z_i, z_j).$$
This means that one can first discover the electric quantities on the network on the edges $[x_i, z_i]$ and then rewire the not-yet explored part of the network and replace it by a 
weighted complete graph joining the $z_i$'s, as illustrated in Figure \ref {MetricTLRewiring}.  

\begin{figure}[ht!]
  \centering % gives better spacing than \begin{center}...\end{center}
  \includegraphics[width=.7\textwidth]{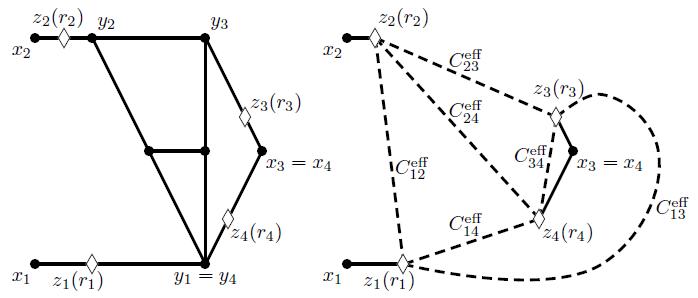}
  \caption{
The original network $\widetilde{\mathcal{G}}$ on the left and the network after rewiring on the right.
Dashed lines are the edges used for rewiring, with corresponding effective conductances. 
}
  \label{MetricTLRewiring}
\end{figure}

Let us now explain how to use this procedure also to discover the GFF $\tilde \phi$ first on the edges $[x_i, z_i]$, and also to describe the marginal law of the GFF on these edges. 
Given $r_{1},\dots,r_{n}$, such that $r_{i}\in [0,R(e_{i})]$, let us define
$$\tilde \phi (r_1, \ldots, r_n) := ( ( \tilde{\phi}_{z_{1}(r)})_{ r \leq r_{1}}, \ldots, (\tilde{\phi}_{z_{n}(r)})_{ r \leq r_{n}} )$$
and let  $\mathcal{F}(r_{1},\dots,r_{n})$ be the $\sigma$-algebra that generated by this process.

The following lemma is then an immediate consequence  of (\ref{partitionfunction}) and of the spatial Markov property of the GFF: 
\begin{lemma}
\label{LemAbsCont}
When $r_{i}\in [0,R(e_{i}))$, the law of $\tilde \phi (r_1, \ldots, r_n)$ is absolutely continuous with respect to
the law of $n$ independent Brownian motions $(W^1, \ldots, W^n)$ respectively started from $(h(x_1), \ldots, h(x_n))$ 
and defined on the time intervals $[0,r_1], \ldots, [0,r_n]$. 
The relative intensity between these two distributions at $w=(w^{1} ( \cdot), \ldots , w^{n} ( \cdot))$ is equal to 
\begin{equation}
\label{EqDensityConduc}
D(r_{1},\dots,r_{n}) [w] :=
\dfrac{1}{Z(r_{1},\dots,r_{n})}
\prod_{1\leq i<j\leq n}\exp\left(-C^{\rm eff}_{ij}(r_{1},\dots,r_{n})
\frac {(w^{(j)}(r_{j})-w^{(i)}(r_{i}))^{2}} 2\right),
\end{equation}
{where $Z(r_{1},\dots,r_{n})$ is the corresponding partition function so that the expected value of $D(r_1, \ldots , r_n)[W]$ when $W=(W^1, \ldots, W^n)$ is equal to $1$.}
\end {lemma}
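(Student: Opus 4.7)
The plan is to combine the spatial Markov property of $\tilde\phi$ with the explicit joint density \eqref{partitionfunction} for the values of the GFF at the sites of $A\cup B$. The first move is to disintegrate both measures being compared according to the endpoint values $(\tilde\phi(z_1),\dots,\tilde\phi(z_n))$: by the Markov property, conditionally on these endpoint values the restriction of $\tilde\phi$ to each segment $[x_i, z_i(r_i)]$ is an independent Brownian bridge from $h(x_i)$ to $\tilde\phi(z_i)$ of time-length $r_i$; by the standard disintegration of Brownian motion, conditionally on $W^i(r_i)=v_i$ the reference law is also exactly such a Brownian bridge. Hence the Radon-Nikodym derivative reduces to the ratio of the joint endpoint densities, and the only remaining task is to identify and simplify that ratio.

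The key computational input is the electric identification of the entries of the effective conductance matrix $C^{\eff}_{A\cup B}$ involving $A$. Because adding the $z_i$ as boundary cuts each edge $e_i$ at $z_i$, the segment $[x_i,z_i]$ is electrically insulated from the rest: any random walk excursion from $x_i$ that tries to leave this segment is killed upon hitting $A\cup B$. Consequently $C^{\eff}_{A\cup B}(x_i,z_i)=1/r_i$, while $C^{\eff}_{A\cup B}(x_i,z_j)=0$ for $j\neq i$, and $C^{\eff}_{A\cup B}(x_i,x_j)=0$ by the standing assumption that $E$ contains no edge between two points of $A$. Substituting these values into \eqref{partitionfunction} and using the shorthand $C^{\eff}_{ij}(r_1,\dots,r_n)=C^{\eff}_{A\cup B}(z_i,z_j)$, the joint density of $(\tilde\phi(z_i))_{i=1}^n$ becomes proportional to
\[
\prod_{i=1}^n\exp\!\left(-\frac{(\tilde\phi(z_i)-h(x_i))^2}{2r_i}\right)\prod_{1\le i<j\le n}\exp\!\left(-C^{\eff}_{ij}(r_1,\dots,r_n)\,\frac{(\tilde\phi(z_j)-\tilde\phi(z_i))^2}{2}\right).
\]

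To conclude, I will compare this with the joint endpoint density of the reference family of independent Brownian motions, which is $\prod_i(2\pi r_i)^{-1/2}\exp(-(w^{(i)}(r_i)-h(x_i))^2/(2r_i))$. The $\exp(-(\cdot-h(x_i))^2/(2r_i))$ factors cancel identically in the ratio, and the Gaussian normalizers $(2\pi r_i)^{-1/2}$ together with the constant of proportionality from \eqref{partitionfunction} assemble into a single deterministic factor which we call $1/Z(r_1,\dots,r_n)$; what remains is exactly \eqref{EqDensityConduc}. The hypothesis $r_i<R(e_i)$ is used precisely here, to ensure that $z_i$ is strictly interior to its edge and therefore that every $C^{\eff}_{A\cup B}(\,\cdot\,,\,\cdot\,)$ appearing above is finite, so the density is well defined.

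The only genuinely delicate step is the electric identification $C^{\eff}_{A\cup B}(x_i,z_i)=1/r_i$ together with the vanishing of the other $A$-to-$B$ and $A$-to-$A$ entries; everything else is a routine Brownian bridge disintegration. Once those entries are in hand, the cancellation of all quadratic-in-$h(x_i)$ terms happens automatically, and the effective-conductance interaction terms between the $z_i$'s survive to give the stated density.
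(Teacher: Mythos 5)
Your proposal is correct and follows exactly the route the paper intends: the paper simply asserts that the lemma is an immediate consequence of \eqref{partitionfunction} and the spatial Markov property, and your argument spells this out by disintegrating both measures over the endpoint values, identifying the single-edge excursion kernel $C^{\rm eff}_{A\cup B}(x_i,z_i)=1/r_i$ so that the one-dimensional Gaussian factors cancel, and leaving the cross-terms that constitute $D(r_1,\dots,r_n)$. One small imprecision is the assertion ``$C^{\rm eff}_{A\cup B}(x_i,z_j)=0$ for $j\neq i$'': when two edges $e_i$ and $e_j$ emanate from the same boundary vertex ($x_i=x_j$) this quantity equals $1/r_j$, not $0$, but since the product in \eqref{partitionfunction} runs over pairs $(x,z)\in A\times B$ rather than index pairs $(i,j)$, the final cancellation is unaffected.
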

In the sequel, we will sometimes simply write $D(r_1, \ldots, r_n)$ as a shorthand for the random variable $D(r_{1},\dots,r_{n})[\tilde \phi(r_1, \ldots, r_n)]$. 
With this notation, given $r_{1},\dots,r_{n}$ and $r'_{1},\dots,r'_{n}$ such that 
$0\leq r_{i}\leq r'_{i}< R(e_{i})$, we can iterate the procedure and see that: 
\begin{equation}
\label{EqDensityMultiMart}
\mathbb{E}[1 / D(r'_{1},\dots,r'_{n}) \vert \mathcal{F}(r_{1},\dots,r_{n})]=
1/ D(r_{1},\dots,r_{n}).
\end{equation}

Note that when $r_{i}=R(e_{i})$, it may happen that $y_{i}=y_{j}$ for some $i \neq j$. Then
$\tilde{\phi}_{z_{i}(r_i)}=\tilde{\phi}_{z_{j}(r_j)}$ and there is no absolute continuity anymore with the law of $n$ independent Brownian motions.

We will consider the class of continuous non-decreasing stochastic
processes $(r_{i}(t))_{1\leq i\leq n, t\geq 0}$, coupled with the metric graph GFF $\tilde{\phi}$, such that
$r_{i}(0)=0$, $r_{i}(t)\in [0,R(e_{i})]$, and
for all $(r_{1},\dots,r_{n})\in\prod_{i=1}^{n}[0,R(e_{i})]$ and $t\geq 0$, 
$$\lbrace\forall i\in\lbrace 1,\dots,n\rbrace, r_{i}(t)\leq r_{i}\rbrace 
\in \mathcal{F}(r_{1},\dots,r_{n}).$$
We will refer to such processes 
$(r_{i}(t))_{1\leq i\leq n, t\geq 0}$ as  \textit{explorations} of the metric graph GFF. Mind that we explore here only the edges adjacent to $A$, and that 
we also allow the exploration to go all the way to $r_i = R(e_i)$. 

The previous lemma allows easily to associate simple martingales to such explorations: Define for each $t \ge 0$,
$$
\tilde{\phi}_{i}(t):=\tilde{\phi}_{z_{i}(r_{i}(t))},
 \quad 
C^{\rm eff}_{ij}(t):=C^{\rm eff}_{ij}(r_{1}(t),\dots,r_{n}(t))
$$
and denote the natural filtration of 
$((\tilde{\phi}_{i}(t),r_{i}(t))_{1\leq i\leq n})_{t\geq 0}$
by $(\mathcal{F}_{t})_{t\geq 0}$.

\begin{lemma}
\label{LemSDE}
The processes 
$(\tilde{\phi}_{i}(t))_{1\leq i\leq n, t\geq 0}$ have the following semi-martingale decomposition:
\begin{equation}
\label{EqSDEExplor}
\tilde{\phi}_{i}(t)=
M_{i}(t)+\sum_{j\neq i}\int_{0}^{t}
C^{\rm eff}_{ij}(s)(\tilde{\phi}_{j}(s)-\tilde{\phi}_{i}(s)) dr_{i}(s),
\end{equation}
where $M_{i}$ is a continuous martingale with respect the filtration of
$(\mathcal{F}_{t})_{t\geq 0}$. Moreover, the quadratic variations satisfy
\begin{equation}
\label{EqVarQuad}
\langle M_{i},M_{i}\rangle_{t}=r_{i}(t),
\qquad \langle M_{i},M_{j}\rangle_{t}=0,~\text{for}~i\neq j.
\end{equation}
\end{lemma}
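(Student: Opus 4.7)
The plan is to derive Lemma \ref{LemSDE} by applying Girsanov's theorem to the absolute continuity supplied by Lemma \ref{LemAbsCont}. First I introduce a reference probability $Q$ under which the ``fields on the explored edges'' are $n$ independent Brownian motions $W^{1},\ldots,W^{n}$ started from $h(x_{1}),\ldots,h(x_{n})$, and I set $\tilde{\phi}_{i}(t):=W^{i}(r_{i}(t))$. Since each $r_{i}$ is continuous, non-decreasing and $\mathcal{F}_{t}$-adapted, the standard theory of Brownian time-change shows that, under $Q$, the processes $\tilde{\phi}_{i}(t)-h(x_{i})$ are continuous $\mathcal{F}_{t}$-local martingales with brackets $\langle\tilde{\phi}_{i},\tilde{\phi}_{i}\rangle_{t}=r_{i}(t)$ and $\langle\tilde{\phi}_{i},\tilde{\phi}_{j}\rangle_{t}=0$ for $i\neq j$. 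Lemma \ref{LemAbsCont} identifies $D(r_{1}(t),\ldots,r_{n}(t))$ with the density $dP/dQ$ on $\mathcal{F}_{t}$, where $P$ is the GFF law, and equation (\ref{EqDensityMultiMart}) says that $D$ is a $Q$-martingale. This is exactly the setting for Girsanov's theorem.

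Girsanov then yields, under $P$, the decomposition $\tilde{\phi}_{i}(t)=M_{i}(t)+A_{i}(t)$, where $M_{i}$ is a $P$-martingale with the same brackets as $\tilde{\phi}_{i}$ under $Q$ (so (\ref{EqVarQuad}) is established), and the drift is
$$ dA_{i}(t) = \frac{d\langle \tilde{\phi}_{i},D\rangle_{t}}{D(t)} = d\langle \tilde{\phi}_{i},\log D\rangle_{t}. $$
From the explicit formula (\ref{EqDensityConduc}),
$$ \log D = -\log Z(r_{1}(t),\ldots,r_{n}(t)) - \frac{1}{2}\sum_{j<k} C^{\rm eff}_{jk}(r_{1}(t),\ldots,r_{n}(t))\bigl(\tilde{\phi}_{k}(t)-\tilde{\phi}_{j}(t)\bigr)^{2}, $$
and since $\log Z$ and each $C^{\rm eff}_{jk}$ are deterministic functions of the $r_{\ell}(t)$'s, hence of finite variation, they contribute nothing to the cross-bracket with $\tilde{\phi}_{i}$. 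By It\^o's formula, the martingale part of $d[(\tilde{\phi}_{k}-\tilde{\phi}_{j})^{2}]$ is $2(\tilde{\phi}_{k}-\tilde{\phi}_{j})(dM_{k}-dM_{j})$, so the orthogonality of the $M_{\ell}$'s gives
$$ d\langle \tilde{\phi}_{i},(\tilde{\phi}_{k}-\tilde{\phi}_{j})^{2}\rangle_{t} = 2\bigl(\tilde{\phi}_{k}(t)-\tilde{\phi}_{j}(t)\bigr)\bigl(\mathbf{1}_{k=i}-\mathbf{1}_{j=i}\bigr)\,dr_{i}(t). $$
Summing over $j<k$ with weights $-C^{\rm eff}_{jk}/2$ and collapsing the two index cases $k=i$ and $j=i$ yields exactly $\sum_{j\neq i}C^{\rm eff}_{ij}(t)(\tilde{\phi}_{j}(t)-\tilde{\phi}_{i}(t))\,dr_{i}(t)$, which is the claimed drift in (\ref{EqSDEExplor}).

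The main technical obstacle is that Lemma \ref{LemAbsCont} holds only for $r_{i}(t)<R(e_{i})$ strictly, since at $r_{i}=R(e_{i})$ distinct explored segments may merge at a common vertex $y_{i}=y_{j}$ and the Gaussian law of the GFF becomes singular with respect to the product Brownian law. This is handled by standard localization: apply the Girsanov argument to the stopped exploration $(r_{i}(t\wedge \tau_{\varepsilon}))_{i}$, where $\tau_{\varepsilon}$ is the first time some $r_{i}(t)$ reaches $R(e_{i})-\varepsilon$, then let $\varepsilon\downarrow 0$ using the continuity of $\tilde{\phi}$, of the $r_{i}$, and of the coefficients $C^{\rm eff}_{ij}$ away from the degenerate configuration. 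The only other routine verification is the uniform integrability of $D$ on each stopped interval, which follows from the explicit Gaussian form of the density.
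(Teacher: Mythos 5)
Your proof follows essentially the same strategy as the paper's: exhibit the exploration process as absolutely continuous with respect to independent time-changed Brownian motions via Lemma \ref{LemAbsCont}, apply Girsanov, and compute the drift as the cross-bracket of $\tilde{\phi}_i$ with $\log D$. Your It\^o computation of $d\langle \tilde{\phi}_i, \log D\rangle_t$ is correct and reproduces exactly the quantity $\langle dD_t/D_t, d\widetilde{M}_i(t)\rangle$ computed in the paper; the only partition terms $j<k$ contributing to the cross-bracket are those with $i\in\{j,k\}$, and summing them yields the stated drift.

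Two places where your argument is thinner than the paper's. First, the passage from Lemma \ref{LemAbsCont} -- which gives the density of the GFF restricted to a \emph{fixed} explored configuration $(r_1,\dots,r_n)$ -- to the assertion that $D(r_1(t),\dots,r_n(t))$ is the Radon--Nikodym density on the time filtration $\mathcal{F}_t$ of the exploration is not automatic. It uses the defining measurability property of an exploration ($\{\forall i,\ r_i(t)\leq r_i\}\in\mathcal{F}(r_1,\dots,r_n)$) together with the martingale identity \eqref{EqDensityMultiMart} and an approximation by dyadic stopping; the paper flags this step explicitly. You state it as a given, which leaves a gap. Second, and more substantively, your localization by $\tau_\varepsilon$ and $\varepsilon\downarrow 0$ only establishes the decomposition on $[0,\tau_1]$, where $\tau_1$ is the first time some $r_i$ hits $R(e_i)$. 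The lemma is asserted for all $t\geq 0$, and an exploration may continue after some coordinates have reached the endpoint and frozen (in which case the GFF restricted to the explored region is no longer absolutely continuous with respect to $n$ independent Brownian motions). The paper handles this by decomposing $[0,\infty)$ at the successive freeze times $\tau_1<\cdots<\tau_k$ and rerunning the argument on each interval $[\tau_q,\tau_{q+1})$ with the reduced set of active coordinates, then patching by continuity. Without this induction on the number of frozen coordinates, your proof stops at $\tau_1$.
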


\begin{proof}
Let $\varepsilon>0$ small.
Let us first assume that for all $i\in \lbrace 1,\dots,n\rbrace$ and $t\geq 0$, 
$r_{i}(t)\leq R(e_{i})-\varepsilon$.
Denote, with same the notation as in \eqref{EqDensityMultiMart},
\begin{displaymath}
D_{t}:=D(r_{1}(t),\dots,r_{n}(t)).
\end{displaymath}
Let $(\widetilde{M}_{i}(t),\tilde{r}_{i}(t))_{1\leq i\leq n, t\geq 0}$ be the process 
with the density $D^{-1}(R(e_{1})-\varepsilon,\dots,R(e_{n})-\varepsilon) [\cdot]$ with respect the law of
$(\tilde{\phi}_{i}(t),r_{i}(t))_{1\leq i\leq n, t\geq 0}$.
Let $(\widetilde{F}_{t})_{t\geq 0}$ be the natural filtration of
$(\widetilde{M}_{i}(t),\tilde{r}_{i}(t))_{1\leq i\leq n, t\geq 0}$.
According to Lemma \ref{LemAbsCont}, there are $n$ independent Brownian motions 
$(W^{(i)}(r))_{1\leq i\leq n, 0\leq r\leq R(e_{i})}$,
such that
\begin{displaymath}
\widetilde{M}_{i}(t)=W^{(i)}(\tilde{r}_{i}(t)).
\end{displaymath}
By construction, an event
\begin{displaymath}
\lbrace\forall i\in\lbrace 1,\dots,n\rbrace, \tilde{r}_{i}(t)\leq r_{i}\rbrace
\end{displaymath}
is measurable with respect to 
$(W^{(i)}_{r})_{1\leq i\leq n, 0\leq r\leq r_{i}}$.
Thus, the processes $(\widetilde{M}_{i}(t))_{1\leq i\leq n, t\geq 0}$ are
$(\widetilde{F}_{t})_{t\geq 0}$-martingales with
\begin{displaymath}
\langle \widetilde{M}_{i},\widetilde{M}_{i}\rangle_{t}=\tilde{r}_{i}(t),
\qquad
\langle \widetilde{M}_{i},\widetilde{M}_{j}\rangle_{t}=0,~\text{for}~i\neq j.
\end{displaymath}

Using Lemma \ref {LemAbsCont}, \eqref{EqDensityMultiMart} and dyadic partitions, one can show that for any positive time $t$, the density of 
the process $(\widetilde{M}_{i}(s),\tilde{r}_{i}(s))_{1\leq i\leq n, 0\leq s\leq t}$ with respect to
$(\tilde{\phi}_{i}(s),r_{i}(s))_{1\leq i\leq n, 0\leq s\leq t}$ is $D^{-1}_{t}$.
According to \eqref{EqDensityConduc}, we can write
\begin{displaymath}
D_{t}=\dfrac{1}{Z(\tilde{r}_{1}(t),\dots,\tilde{r}_{n}(t))}
\prod_{1\leq i<j\leq n}\exp\left(-\dfrac{1}{2}C^{\rm eff}_{ij}(\tilde{r}_{1}(t),\dots,\tilde{r}_{n}(t))
(\widetilde{M}_{j}(t)-\widetilde{M}_{i}(t))^{2}\right).
\end{displaymath}
The function $Z(\tilde{r}_{1}(t),\dots,\tilde{r}_{n}(t))$ has bounded variation, so that It\^o's formula implies that
\begin {displaymath}
\left\langle  \dfrac{d D_{t}}{D_{t}},d\widetilde{M}_{i}(t)\right\rangle=
\sum_{j\neq i}C^{\rm eff}_{ij}(\tilde{r}_{1}(t),\dots,\tilde{r}_{n}(t))(\widetilde{M}_{j}(t)-\widetilde{M}_{i}(t)) dr_{i}(t).
\end{displaymath}
The lemma in this particular case follows by applying Girsanov's theorem (for the multidimensional version of Girsanov's Theorem,
see for instance \cite{KaratzasShreve2010BMStochCalc}, Section 3.5).

Let us now consider the general case where we do  not assume anymore that $r_{i}(t)\leq R(e_{i})-\varepsilon$. 
Let $\tau_{1}^{\varepsilon}$ be the first time at which one of the $r_{i}(t)$
hits $ R(e_{i})-\varepsilon$. Then,
$(\tilde{\phi}_{i}(t\wedge \tau_{1}^{\varepsilon} ))_{1\leq i\leq n, t\geq 0}$ satisfies 
\eqref{EqSDEExplor} and \eqref{EqVarQuad}.
As $\varepsilon$ tends to $0$, $(\tilde{\phi}_{i}(t\wedge \tau_{1}^{\varepsilon} ))_{1\leq i\leq n, t\geq 0}$
converges to $(\tilde{\phi}_{i}(t\wedge \tau_{1}))_{1\leq i\leq n, t\geq 0}$, where
$\tau_{1}$ is the first time one of the $r_{i}(t)$ hits $ R(e_{i})$. 
The stopped martingales $(M_{i}(t\wedge \tau_{1}^{\varepsilon}))_{t\geq 0}$ have a bounded quadratic variation uniformly
in $\varepsilon$. Thus the stopped martingales converge as $\varepsilon$ tends to $0$.
Consequently the drift term in \eqref{EqSDEExplor} converges too.
By convergence,
$(\tilde{\phi}_{i}(t\wedge \tau_{1}))_{1\leq i\leq n, t\geq 0}$ satisfies 
\eqref{EqSDEExplor} and \eqref{EqVarQuad}. 

After time $\tau_{1}$, the $r_{i}(t)$-s that did already
hit $R(e_{i})$, as well as the corresponding $\tilde{\phi}_{i}(t)$-s, stay constant. The number of evolving processes $\tilde{\phi}_{j}(t)$ decreases but the evolution equation is the same. Indeed, we can apply the argument of absolute continuity with respect a family of time-changed independent Brownian motions to a smaller number of processes. We can decompose the time-line $[0,+\infty)$ using stopping times 
$\tau_{0}=0< \tau_{1}<\dots< \tau_{k}< \tau_{k+1}=+\infty$, where $\tau_{1},\dots,\tau_{k}$ correspond to the successive times when 
one of the $r_{i}(t)$ hits $R(e_{i})$. On each interval $[\tau_{q},\tau_{q+1})$, the processes 
$\tilde{\phi}_{j}(t)$ that are not frozen evolve
according \eqref{EqSDEExplor} and \eqref{EqVarQuad}. Since all the processes $\tilde{\phi}_{j}(t)$ are continuous, \eqref{EqSDEExplor} and \eqref{EqVarQuad} are valid on $[0,+\infty)$.
\end{proof}

The previous lemma allows to obtain the following simple variational property of the partition functions $Z(r_1, \ldots, r_n)$
{and of the effective conductances $C^{\rm eff}_{ij}(r_{1},\dots,r_{n})$}. Note that other proofs of these deterministic facts
are possible (this is indeed analogous to Hadamard's variational formula
\cite[Chapter 15]{Garaberdian1986PDE} which gives the first order variation of a Green's function under a perturbation of a continuum domain -- the Green's function corresponds to effective resistances).
\begin{lemma}
\label{LemDerivPartFunc}
Let $(r_{1},\dots,r_{n})\in\prod_{i=1}^{n}(0,R(e_{i}))$. Then, with the notation of
\eqref{EqDensityConduc}, we have 
\begin{equation}
\label{EqDerLogZ}
\dfrac{\partial_{r_{i}}Z(r_{1},\dots,r_{n})}{Z(r_{1},\dots,r_{n})}
+\dfrac{1}{2}
\sum_{j\neq i}
C^{\rm eff}_{ij}(r_{1},\dots,r_{n})
=0.
\end{equation}
For $j\neq i$,
\begin{equation}
\label{EqDerCond}
\partial_{r_{i}}C^{\rm eff}_{ij}(r_{1},\dots,r_{n})=
C^{\rm eff}_{ij}(r_{1},\dots,r_{n})\sum_{j'\neq i}C^{\rm eff}_{ij'}(r_{1},\dots,r_{n}).
\end{equation}
For $i \notin \{ j, j'\}$, and $j \neq j'$,
\begin{equation}
\label{EqDerCond2}
\partial_{r_{i}}C^{\rm eff}_{jj'}(r_{1},\dots,r_{n})=
-C^{\rm eff}_{ij}(r_{1},\dots,r_{n})C^{\rm eff}_{ij'}(r_{1},\dots,r_{n}).
\end{equation}
\end{lemma}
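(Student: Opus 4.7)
The plan is to derive all three identities simultaneously as consequences of Lemma \ref{LemSDE} and the martingale property \eqref{EqDensityMultiMart} of $1/D_t$. I fix $(r_1,\dots,r_n)\in\prod_{i=1}^n(0,R(e_i))$ and choose the simplest exploration: let only $r_i(t)=r_i+t$ evolve at unit speed while freezing all other coordinates. Along this exploration $\tilde\phi_j(t)=\tilde\phi_j$ stays constant for $j\neq i$, and Lemma \ref{LemSDE} gives
\[
d\tilde\phi_i(t)=dM_i(t)+\sum_{j'\neq i} C^{\rm eff}_{ij'}(t)\bigl(\tilde\phi_{j'}-\tilde\phi_i(t)\bigr)\,dt,\qquad d\langle M_i\rangle_t=dt.
\]

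I then apply It\^o's formula to
\[
\log(1/D_t)=\log Z(r(t))+\tfrac{1}{2}\sum_{a<b}C^{\rm eff}_{ab}(r(t))\bigl(\tilde\phi_a(t)-\tilde\phi_b(t)\bigr)^2.
\]
The martingale property of $1/D_t$ forces the drift of $\log(1/D_t)+\tfrac{1}{2}\langle\log(1/D_t)\rangle_t$ to vanish; evaluating at $t=0^+$ yields the desired relations at the chosen point. Setting $x_j:=\tilde\phi_j-\tilde\phi_i$ and collecting the contributions coming from $\partial_{r_i}\log Z$, from the $r_i$-derivatives of each $C^{\rm eff}_{ab}$, from the It\^o correction $C^{\rm eff}_{ij}\,d\langle\tilde\phi_i\rangle_t=C^{\rm eff}_{ij}\,dt$ for each pair $(i,j)$, and from the quadratic variation of the martingale part $-\sum_{j\neq i}C^{\rm eff}_{ij}x_j\,dM_i$, this vanishing drift condition becomes a polynomial identity in the $x_j$'s whose constant coefficient equals $\partial_{r_i}\log Z+\tfrac{1}{2}\sum_{j\neq i}C^{\rm eff}_{ij}$, whose coefficient of $x_jx_{j'}$ for $j\neq j'$ with $j,j'\neq i$ equals $-\partial_{r_i}C^{\rm eff}_{jj'}-C^{\rm eff}_{ij}C^{\rm eff}_{ij'}$, and whose coefficient of $x_j^2$ equals $\tfrac{1}{2}\bigl[\partial_{r_i}C^{\rm eff}_{ij}-(C^{\rm eff}_{ij})^2+\sum_{b\neq i,j}\partial_{r_i}C^{\rm eff}_{jb}\bigr]$.

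The final step is to match coefficients. Lemma \ref{LemAbsCont} together with the freedom to vary the boundary data $h$ on $A$ guarantee that $(\tilde\phi_j)_{j\neq i}$ has full support, so the polynomial identity must hold coefficient by coefficient. The constant term gives \eqref{EqDerLogZ}, the cross term gives \eqref{EqDerCond2}, and the $x_j^2$ coefficient, after substituting \eqref{EqDerCond2} to rewrite $\sum_{b\neq i,j}\partial_{r_i}C^{\rm eff}_{jb}=-C^{\rm eff}_{ij}\sum_{b\neq i,j}C^{\rm eff}_{ib}$, rearranges to $\partial_{r_i}C^{\rm eff}_{ij}=C^{\rm eff}_{ij}\sum_{j'\neq i}C^{\rm eff}_{ij'}$, which is \eqref{EqDerCond}. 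The main obstacle I anticipate is not conceptual but organizational: carefully expanding $(x_a-x_b)^2=x_a^2-2x_ax_b+x_b^2$ for each pair $a,b\neq i$ and tracking its contribution both to the coefficient of $x_j^2$ (when $j\in\{a,b\}$) and to the cross coefficient of $x_ax_b$, before combining with the direct $(i,j)$ and $(i,j')$ contributions and with the quadratic variation term.
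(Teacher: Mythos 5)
Your proof is correct and takes essentially the same route as the paper: apply It\^o's formula to the martingale $D_t^{-1}$ (the paper works with $D_t^{-1}$ directly while you equivalently use $\log(1/D_t)$ together with its quadratic variation), substitute the semi-martingale decomposition of Lemma~\ref{LemSDE}, and read off the three identities from the vanishing of the drift by matching polynomial coefficients in the $\tilde\phi_j$'s. The only difference is cosmetic---you expand in the monomial basis $x_j^2$, $x_jx_{j'}$ rather than the paper's basis $(\tilde\phi_j-\tilde\phi_i)^2$, $(\tilde\phi_{j}-\tilde\phi_{j'})^2$; also, the claim that the $\tilde\phi_j$ have full support does not require varying $h$, as the non-degenerate Gaussian law of $(\tilde\phi_j)_{j\ne i}$ already suffices.
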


\begin{proof}
By symmetry, we can restrict to the case where $i=1$.
With the notation of \eqref{EqDensityMultiMart},
the process $(D(r,r_{2},\dots,r_{n})^{-1})_{r\in (0,R(e_{1}))}$ is a martingale.
%We get \eqref{EqDerLogZ} by applying Itô's formula.
Applying Itô's formula and \eqref{EqSDEExplor} to it, we see that the drift term turns out to be equal to 
\begin{eqnarray*}
&&
D(r,r_{2},\dots,r_{n})^{-1}
\Big(\dfrac{\partial_{r_{1}}Z(r,r_{2},\dots,r_{n})}{Z(r,r_{2},\dots,r_{n})}+
\dfrac{1}{2}\sum_{j=2}^{n}C^{\rm eff}_{1j}(r,r_{2},\dots,r_{n})
\\
&+&
\sum_{j=2}^{n}\Big(\partial_{r_{1}}C^{\rm eff}_{1j}(r,r_{2},\dots,r_{n})
-C^{\rm eff}_{1j}(r,r_{2},\dots,r_{n})\sum_{j'=2}^{n}C^{\rm eff}_{1j'}(r,r_{2},\dots,r_{n})\Big)
(\tilde{\phi}_{z_{j}(r_{j})}-\tilde{\phi}_{z_{1}(r)})^{2}
\\
&+&
\dfrac{1}{2}\sum_{2\leq j<j'\leq n}
\Big(\partial_{r_{1}}C^{\rm eff}_{jj'}(r,r_{2},\dots,r_{n})
+C^{\rm eff}_{1j}(r,r_{2},\dots,r_{n})C^{\rm eff}_{1j'}(r,r_{2},\dots,r_{n})\Big)
(\tilde{\phi}_{z_{j'}(r_{j'})}-\tilde{\phi}_{z_{j}(r_{j})})^{2}\Big).
\end{eqnarray*}
Since this drift must be equal to zero, each pre-factor of a term
in $(\tilde{\phi}_{z_{j}(r_{j})}-\tilde{\phi}_{z_{1}(r)})^{2}$ or a
term in $(\tilde{\phi}_{z_{j'}(r_{j'})}-\tilde{\phi}_{z_{j}(r_{j})})^{2}$, as
well as the term not involving $\tilde{\phi}$, must be zero.
By identifying different terms, we get exactly
\eqref{EqDerLogZ}, \eqref{EqDerCond} and \eqref{EqDerCond2}.
\end{proof}

Finally, we note that conversely, the previous descriptions  can be used to actually construct a GFF. More specifically, 
let us consider a continuous stochastic process $(X_{i}(t),r_{i}(t))_{1\leq i\leq n, 0\leq t\leq T}$ and denote 
$(\mathcal{F}_{t})_{0\leq t\leq T}$ its natural filtration.
We further assume that:
\begin{itemize}
\item Each $r_{i}(t)$ is a non-decreasing process with values in $[0,R(e_{i})]$ and $r_{i}(0)=0$.
\item For each $i$, $X_{i}(0)=h(x_{i})$, and $X_{i}(t)$ is a semi-martingale with respect the filtration 
$(\mathcal{F}_{t})_{t\geq 0}$, with the semi-martingale decomposition
\begin{equation}
\label{EqSDEX}
X_{i}(t)=
M_{i}(t)+\sum_{j\neq i}\int_{0}^{t}
C^{\rm eff}_{ij}(r_{1}(s),\dots,r_{n}(s))(X_{j}(s)-X_{i}(s)) dr_{i}(s),
\end{equation}
where $M_{i}(t)$ is an $(\mathcal{F}_{t})_{0\leq t\leq T}$-martingale.
\item The martingales $M_{i}(t)$ satisfy
\begin{equation}
\label{EqSDEXQV}
\langle M_{i},M_{i}\rangle_{t}=r_{i}(t),
\qquad
\langle M_{i},M_{j}\rangle_{t}=0,~\text{for}~i\neq j.
\end{equation}
\end{itemize}
We can note that $X_{i}(t)$ is then constant on the time intervals where $r_{i}$ remains constant. 
We then define $(\hat{\phi}_{z})_{z\in\widetilde{\mathcal{G}}}$ 
to be the random field on the metric graph
$\widetilde{\mathcal{G}}$ coupled to $(X_{i}(t),r_{i}(t))_{1\leq i\leq n, 0\leq t\leq T}$ in the following way:
\begin{itemize}
\item For $t\leq T$, $\hat{\phi}_{z_{i}(r_{i}(t))}=X_{i}(t)$.
\item Outside the lines $[x_{1},z_{1}(r_{1}(T))],\dots,[x_{n},z_{n}(r_{n}(T))]$, conditionally on the value of
$\hat{\phi}$ on these lines, $\hat{\phi}$ is distributed like a metric graph GFF with boundary conditions 
$\hat{\phi}_{x_{1}}, \hat{\phi}_{z_{1}(r_{1}(T))},\dots,$
$\hat{\phi}_{x_{n}}, \hat{\phi}_{z_{n}(r_{n}(T))}$.
\end{itemize}
\begin {lemma}
\label {LemSDE2}
Under all these conditions, the field $\hat{\phi}$ is distributed like the metric graph GFF $\tilde{\phi}$.
\end{lemma}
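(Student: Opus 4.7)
The plan is to invert the direction of Girsanov used in Lemma \ref{LemSDE}: given any $(X_i(t), r_i(t))$ satisfying \eqref{EqSDEX}--\eqref{EqSDEXQV}, I would reweight by the density $D_t^{-1}$ to obtain $n$ independent time-changed Brownian motions started at $h(x_i)$, which is exactly the same characterization that Lemma \ref{LemAbsCont} provides for the actual exploration $(\tilde\phi_i(t), r_i(t))$ of the metric graph GFF. This yields equality of the two joint laws on the explored lines, and then the metric graph Markov property of $\tilde\phi$ identifies the conditional law on the unexplored complement with the GFF that is used to define $\hat\phi$ there, finishing the proof.

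Concretely, I would first fix $\varepsilon>0$ and stop at the first time $\tau^\varepsilon$ at which some $r_i(t)$ reaches $R(e_i)-\varepsilon$. On $[0,\tau^\varepsilon]$, define $D_t:=D(r_1(t),\dots,r_n(t))[X]$ by \eqref{EqDensityConduc}. Applying It\^o's formula together with the orthogonality relations \eqref{EqSDEXQV}, the drift structure \eqref{EqSDEX}, and the three variational identities \eqref{EqDerLogZ}, \eqref{EqDerCond}, \eqref{EqDerCond2} of Lemma \ref{LemDerivPartFunc}, one checks that $D_t^{-1}$ is a local martingale and that
\[
\left\langle \frac{dD_t^{-1}}{D_t^{-1}},\,dM_i(t)\right\rangle = -\sum_{j\neq i}C^{\rm eff}_{ij}(r_1(t),\dots,r_n(t))\,(X_j(t)-X_i(t))\,dr_i(t),
\]
which cancels precisely the drift of $X_i$ in \eqref{EqSDEX}. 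Girsanov's theorem then says that under the probability measure with density $D_{\tau^\varepsilon}^{-1}$, the processes $X_i$ are continuous martingales starting at $h(x_i)$ with quadratic variations $r_i(t)$ and pairwise zero brackets, i.e.\ $X_i(t)=W^{(i)}(r_i(t))$ for independent Brownian motions $W^{(i)}$. By Lemma \ref{LemAbsCont} the same description holds for $(\tilde\phi_i(t),r_i(t))_{t\le\tau^\varepsilon}$, with density $D_t$ relative to the corresponding reference measure. Hence the laws of $(X_i(t),r_i(t))_{t\le\tau^\varepsilon}$ and of $(\tilde\phi_i(t),r_i(t))_{t\le\tau^\varepsilon}$ coincide.

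Sending $\varepsilon\downarrow 0$ extends the identity up to the first time $\tau_1$ at which some $r_i$ hits $R(e_i)$, and past $\tau_1$ I would iterate exactly as in the last paragraph of the proof of Lemma \ref{LemSDE}: the coordinates whose $r_i$ has reached $R(e_i)$ freeze, the SDE \eqref{EqSDEX}--\eqref{EqSDEXQV} continues to govern the remaining ones with a smaller effective conductance matrix, and the same Girsanov argument applies on each successive interval $[\tau_q,\tau_{q+1}]$. After finitely many such steps, one obtains equality in law of the full explored process $(X_i(t),r_i(t))_{i,t\ge 0}$ with $(\tilde\phi_i(t),r_i(t))_{i,t\ge 0}$.

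To conclude, I would invoke the metric graph domain Markov property of $\tilde\phi$: conditionally on the restriction of $\tilde\phi$ to the lines $[x_i,z_i(r_i(T))]$, the field $\tilde\phi$ on the complement is itself a metric graph GFF with boundary values given by $\tilde\phi$ at the endpoints $x_i$ and $z_i(r_i(T))$. Since $\hat\phi$ is defined by the same recipe on top of $(X_i,r_i)$, combining with the previous step gives $\hat\phi\stackrel{d}{=}\tilde\phi$. The only genuinely delicate point is the It\^o/Girsanov computation showing that $D_t^{-1}$ is a local martingale whose correction cancels the drift in \eqref{EqSDEX}; but this is the direct counterpart of the drift-vanishing identification that produced \eqref{EqDerLogZ}--\eqref{EqDerCond2} in Lemma \ref{LemDerivPartFunc}, read in reverse, so no new computation is really needed.
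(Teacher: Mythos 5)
Your plan matches the paper's: reverse the Girsanov change of measure, use \eqref{EqDerLogZ}--\eqref{EqDerCond2} to see that $D_t^{-1}$ is a local martingale whose correction cancels the drift of \eqref{EqSDEX}, compare densities, and conclude via the Markov property. The sign of your cross-variation formula is also correct.

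However, there is one genuine gap, and it concerns the crucial last comparison step (``Hence the laws of $(X_i(t),r_i(t))_{t\le\tau^\varepsilon}$ and of $(\tilde\phi_i(t),r_i(t))_{t\le\tau^\varepsilon}$ coincide''). Lemma~\ref{LemAbsCont} gives the density $D(r_1,\dots,r_n)$ of the GFF restricted to segments of \emph{deterministic} lengths $r_1,\dots,r_n$ relative to independent Brownian motions on those same deterministic segments. In your argument the explored region at time $T$ (or $\tau^\varepsilon$) is a \emph{random} set, and the hypotheses of Lemma~\ref{LemSDE2} do not require $(r_i(t))$ to satisfy the exploration/adaptedness condition $\{r_i(t)\le r_i~\forall i\}\in\mathcal F(r_1,\dots,r_n)$ that is used when this kind of density identity is pushed through a random exploration (as in the proof of Lemma~\ref{LemSDE}). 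The paper resolves exactly this difficulty by first \emph{extending} the process past time $T$: for $t>T$ it sets $r_i(t):=(r_i(T)+(t-T))\wedge(R(e_i)-\varepsilon)$ and $X_i(t):=\hat\phi_{z_i(r_i(t))}$, so that the conditions of Lemma~\ref{LemSDE} hold on $[T,\infty)$ as well (because the extension explores the conditional GFF defining $\hat\phi$), the SDE \eqref{EqSDEX}--\eqref{EqSDEXQV} thus holds on all of $[0,\infty)$, and eventually each $r_i(t)$ equals the deterministic value $R(e_i)-\varepsilon$. After the Girsanov reweighting and Dubins--Schwarz--Knight, the process $(X_i,r_i)$ then has density precisely $D(R(e_1)-\varepsilon,\dots,R(e_n)-\varepsilon)$ with respect to the independent Brownian motions, and Lemma~\ref{LemAbsCont} applies directly to the \emph{deterministic} set $\{z_i(r):0\le r\le R(e_i)-\varepsilon\}$. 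Without this extension your density comparison needs a separate justification (an exploration-adaptedness argument or a careful appeal to optional sampling for the density process) which is exactly the point the extension is designed to bypass.

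A second, more minor omission: $D_t^{-1}$ is only a local martingale a priori; the paper localizes with the hitting-time family $\theta_K=\inf\{t:\max_i|X_i(t)|\ge K\}$ to get bounded (hence true) martingales before invoking Girsanov and then pastes the measures via the natural coupling in $K$. Stating ``Girsanov's theorem then says\dots'' without this step skips over a point the paper treats carefully. Aside from these two points, your iteration past successive hitting times $\tau_q$ and your use of the Markov property to finish are the same as the paper's.
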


\begin{proof}
Let $\varepsilon>0$ small. Let us first assume that for all $i$, $r_{i}(T)\leq R(e_{i})-\varepsilon$. 
We extend the definition of $r_{i}(t)$ and $X_{i}(t)$ for $t>T$,
\begin{displaymath}
r_{i}(t):=
(r_{i}(T)+(t-T))\wedge (R(e_{i})-\varepsilon) ~\text{if}~t>T,
\end{displaymath}
and set $X_{i}(t)=\hat{\phi}_{z_{i}(r_{i}(t))}$. Let $(\mathcal{F}_{t})_{t\geq 0}$ be the natural filtration
of $(X_{i}(t),r_{i}(t))_{1\leq i\leq n, t\geq 0}$.
From Lemma~\ref{LemSDE} follows that for $t\geq T$, the processes
$(X_{i}(t))_{1\leq i\leq n, t\geq T}$ satisfy the equations \eqref{EqSDEExplor} and \eqref{EqVarQuad}, because by definition, for $t\geq T$, $z_{i}(r_{i}(t))$ explores a metric graph GFF.
For $t\in[0,T]$, \eqref{EqSDEExplor} and \eqref{EqVarQuad} are satisfied too, because of
\eqref{EqSDEX} and \eqref{EqSDEXQV}. So for all $t\geq 0$, we have
\begin{displaymath}
X_{i}(t)=
M_{i}(t)+\sum_{j\neq i}\int_{0}^{t}
C^{\rm eff}_{ij}(r_{1}(s),\dots,r_{n}(s))(X_{j}(s)-X_{i}(s)) dr_{i}(s),
\end{displaymath}
where the quadratic variations and covariations of the $(\mathcal{F}_{t})_{t\geq 0}$-martingales 
$(M_{i}(t))_{1\leq i\leq n, t\geq 0}$ are given by \eqref{EqVarQuad}.
 
Define
\begin{displaymath}
D^{-1}_{t}:=Z(r_{1}(t),\dots,r_{n}(t))
\prod_{1\leq i<j\leq n}
\exp\left(\frac{1}{2}
C^{\rm eff}_{ij}(r_{1}(t),\dots,r_{n}(t))(X_{j}(t)-X_{i}(t))^{2}
\right).
\end{displaymath}
Applying Itô's formula and \eqref{EqDerLogZ}, \eqref{EqDerCond} and \eqref{EqDerCond2}, 
we  get that $(D^{-1}_{t})_{t\geq 0}$ is
an $(\mathcal{F}_{t})_{t\geq 0}$ local martingale. 
Moreover
\begin{displaymath}
D^{-1}_{0}=
Z(0,\dots,0)\prod_{1\leq i<j\leq n}
\exp\left(\frac{1}{2}C^{\rm eff}_{ij}(0,\dots,0)(h(x_{j})-h(x_{i}))^{2}\right).
\end{displaymath}
From Lemma \ref{LemAbsCont}, it follows that $D^{-1}_{0}=1$.

For $K>0$, denote $\theta_{K}$ the stopping time when one of the $X_{i}(t)$ hits $K$ or $-K$. 
The stopped local martingale $(D^{-1}_{t\wedge \theta_{K}})_{t\geq 0}$ is bounded, thus it is a true martingale. 
With the condition $D^{-1}_{0}=1$, we get that it is a change of measure martingale for the filtration 
$(\mathcal{F}_{t})_{t\geq 0}$. We apply the change of measure of density 
$(D^{-1}_{t\wedge \theta_{K}})_{t\geq 0}$ to the process
$(X_{i}(t\wedge\theta_{K}), r_{i}(t\wedge\theta_{K}))_{1\leq i\leq n, t\geq 0}$ 
and obtain a processes denoted 
$(\widetilde{M}_{i,K}(t), \tilde{r}_{i,K}(t))_{1\leq i\leq n, t\geq 0}$. By Girsanov's theorem,
$\widetilde{M}_{i,K}$ is a martingale with respect the natural filtration of
$(\widetilde{M}_{i,K}(t), \tilde{r}_{i,K}(t))_{1\leq i\leq n, t\geq 0}$ the quadratic variations are
\begin{displaymath}
\langle \widetilde{M}_{i,K},\widetilde{M}_{i,K}\rangle_{t}=\tilde{r}_{i,K}(t),
\qquad
\langle \widetilde{M}_{i,K},\widetilde{M}_{j,K}\rangle_{t}=0,~\text{for}~i\neq j.
\end{displaymath}
Let $\tilde{\theta}_{K,K'}$ be the first time one of the $\widetilde{M}_{i,K}$ hits $K'$ or $-K'$.
The processes do not evolve after time $\tilde{\theta}_{K,K'}$.
Let $K'\in(0,K)$. 
Then
$(\widetilde{M}_{i,K}(t), \tilde{r}_{i,K}(t))_{1\leq i\leq n, t\geq 0}$
and
$(\widetilde{M}_{i,K'}(t), \tilde{r}_{i,K'}(t))_{1\leq i\leq n, t\geq 0}$
are naturally coupled on the same probability space such that
\begin{displaymath}
(\widetilde{M}_{i,K'}(t), \tilde{r}_{i,K'}(t))_{1\leq i\leq n, t\geq 0}=
(\widetilde{M}_{i,K}(t\wedge\tilde{\theta}_{K,K'}), 
\tilde{r}_{i,K}(t\wedge\tilde{\theta}_{K,K'}))_{1\leq i\leq n, t\geq 0}.
\end{displaymath}
By coupling this way the processes for all values of $K$, one gets a process
$(\widetilde{M}_{i}(t), \tilde{r}_{i}(t))_{1\leq i\leq n, t\geq 0}$ such that
\begin{displaymath}
(\widetilde{M}_{i,K}(t), \tilde{r}_{i,K}(t))_{1\leq i\leq n, t\geq 0}=
(\widetilde{M}_{i}(t\wedge\tilde{\theta}_{K}), 
\tilde{r}_{i}(t\wedge\tilde{\theta}_{K}))_{1\leq i\leq n, t\geq 0},
\end{displaymath}
where $\tilde{\theta}_{K}$ is the first hitting time of $K$ or $-K$.
$\widetilde{M}_{i}$ is a martingale with respect to the filtration generated by
$(\widetilde{M}_{i}(t), \tilde{r}_{i}(t))_{1\leq i\leq n, t\geq 0}$ and
it satisfies 
\begin{displaymath}
\langle \widetilde{M}_{i},\widetilde{M}_{i}\rangle_{t}=\tilde{r}_{i}(t),
\qquad
\langle \widetilde{M}_{i},\widetilde{M}_{j}\rangle_{t}=0,~\text{for}~i\neq j.
\end{displaymath}

According to Dubins-Schwarz-Knight's theorem (see Theorem 1.9, chapter V, §1 in \cite{RevuzYor1999BMGrundlehren}), there are 
$n$ independent Brownian motions $(W^{(i)})_{1\leq i\leq n}$ and independent from
$(\tilde{r}_{i}(t))_{1\leq i\leq n, t\geq 0}$, such that
\begin{displaymath}
\widetilde{M}_{i}(t)=W^{(i)}(\tilde{r}_{i}(t)).
\end{displaymath}
With the notation of \eqref{EqDensityConduc}, the process 
$(X_{i}(t\wedge\theta_{K}), r_{i}(t\wedge\theta_{K}))_{1\leq i\leq n, t\geq 0}$
has a density
\begin{displaymath}
D(R(e_{1})-\varepsilon,\dots,R(e_{n})-\varepsilon)
\end{displaymath}
with respect the process
$(\widetilde{M}_{i}(t\wedge\tilde{\theta}_{K}), 
\tilde{r}_{i}(t\wedge\tilde{\theta}_{K}))_{1\leq i\leq n, t\geq 0}$.
Thus 
$(X_{i}(t), r_{i}(t))_{1\leq i\leq n, t\geq 0}$ has the same density
$D(R(e_{1})-\varepsilon,\dots,R(e_{n})-\varepsilon)$ with respect
$(\widetilde{M}_{i}(t),\tilde{r}_{i}(t))_{1\leq i\leq n, t\geq 0}$.
From Lemma \ref{LemAbsCont} follows that the process
$(\hat{\phi}_{z_{i}(r)})_{1\leq i\leq n, 0\leq r\leq R(e_{i})-\varepsilon}$ 
is distributed like the restriction of a metric graph GFF to 
\begin{displaymath}
\lbrace z_{i}(r)\vert 1\leq i\leq n, 0\leq r\leq R(e_{i})-\varepsilon\rbrace.
\end{displaymath}
Together with the Markov property follows that $\hat{\phi}$ is distributed like a metric graph GFF.

Finally, let us not assume anymore that $r_{i}(T)\leq R(e_{i})-\varepsilon$ for all $i$. Let $\tau^{\varepsilon}_{1}$
be the first time on $[0,T]$ when one of the $r_{i}(t)$ hits $R(e_{i})-\varepsilon$.
Let $\tau_{1}$ be the first time on $[0,T]$ when one $r_{i}(t)$ hits $R(e_{i})$.
Then the process 
$(X_{i}(t\wedge \tau^{\varepsilon}_{1}),r_{i}(t\wedge \tau^{\varepsilon}_{1}))_{1\leq i\leq n, 0\leq t\leq T}$
satisfies the lemma. By convergence
$(X_{i}(t\wedge \tau_{1}),r_{i}(t\wedge \tau_{1}))_{1\leq i\leq n, 0\leq t\leq T}$
satisfies the lemma, too.
Then one can consider successive stopping times $\tau_{1},\dots,\tau_{k}$ when one
of the $r_{i}(t)$ hits $R(e_{i})$. After each stopping time $\tau_{q}$, there is at least one process 
$(X_{i}(t),r_{i}(t))$ that is frozen, and applying our previous method to a smaller number of evolving processes, one can show that 
$(X_{i}(t\wedge \tau_{q}),r_{i}(t\wedge \tau_{q}))_{1\leq i\leq n, 0\leq t\leq T}$
satisfies the lemma for all $q\in \lbrace 1,\dots,k\rbrace$.
\end{proof}

\subsection {A particular exploration}
We now define a particular exploration of the GFF, keeping the same notation as before. Let us first give the formal definition:  
Given $r\in[0,R(e_{i})]$, we will denote by $L_{i}(r)$ the local time at zero accumulated by $\tilde{\phi}$ on the interval
$[x_{i},z_{i}(r)]$. We will construct an exploration $(r_{i}(t))_{1\leq i\leq n, t\geq 0}$ of $\tilde{\phi}$, such that for all $t\geq 0$,
\begin{equation}
\label{EqSameLT0}
L_{1}(r_{1}(t))=\dots=L_{n}(r_{n}(t)).
\end{equation}
{This is reminiscent of first-passage percolation, where one also looks at the growth of geodesic balls in a random metric. Note however that in the present setting,
the random lengths of edges are neither uniform nor independent, and that some edges have zero length.}

Denote
\begin{displaymath}
\ell_{max}:=\min(L_{1}(R(e_{1})),\dots,L_{n}(R(e_{n}))).
\end{displaymath}
For $\ell\in(0,\ell_{max}]$, define
\begin{displaymath}
R^{-}_{i}(\ell):=\sup\lbrace r\in[0,R(e_{i})]\vert L_{i}(r)<\ell\rbrace.
\end{displaymath}
and
\begin{displaymath}
R^{-}(\ell):=\sum_{i=1}^{n}R^{-}_{i}(\ell).
\end{displaymath}
Denote, if $h(x_{i})\neq 0$,
\begin{displaymath}
\rho^{0}_{i}:=\sup\lbrace r\in[0,R(e_{i})]\vert \tilde{\phi}_{z_{i}(\cdot)}
~\text{stays away from}~0~\text{on}~[0,r]\rbrace
.
\end{displaymath}
If $h(x_{i})= 0$, set $\rho^{0}_{i}=0$.
For $r\in (0,R^{-}_{i}(\ell_{max})]$, define
\begin{displaymath}
T_{i}(r)=
\left\lbrace 
\begin{array}{ll}
r+\sum_{j=1}^{i-1}\rho^{0}_{j} & \text{if}~r\leq\rho^{0}_{i}, \\ 
r-R^{-}_{i}(L_{i}(r)) + R^{-}(L_{i}(r)) & \text{if}~r>\rho^{0}_{i}.
\end{array} 
\right. 
\end{displaymath}

Define $r_{i}(t)$ as the inverse of $T_{i}$. If $t\in[0,R^{-}(\ell_{max})]$, let
\begin{displaymath}
r_{i}(t)=\inf\lbrace r\in(0,R^{-}_{i}(\ell_{max})]\vert T_{i}(r)\geq t\rbrace
\end{displaymath}
For $t>R^{-}(\ell_{max})$, we set $r_{i}(t)=R^{-}_{i}(\ell_{max})$.

Let us now explain in words what this processes $(r_{i}(t))_{1\leq i\leq n, t\geq 0}$ does. Each $r_{i}(t)$ starts from $0$.
First, $r_{1}(t)$ starts by exploring the interval $[0,\rho^{0}_{1}]$ while 
$r_{2}(t),\dots,r_{n}(t)$ stay at $0$. Then $r_{2}(t)$ explores $[0,\rho^{0}_{2}]$, etc.
At time $\sum_{i=1}^{n}\rho^{0}_{i}$, each $r_{i}(t)$ is at $\rho^{0}_{i}$. After that time, each
$r_{i}(t)$ explores the excursions of $\tilde{\phi}_{z_{i}(\cdot)}$ away from zero. The condition
\eqref{EqSameLT0} actually imposes the order in which the excursions across different values of $i$ are explored. At time $R^{-}(\ell_{max})$, each $r_{i}(t)$ reaches $R^{-}_{i}(\ell_{max})$ and does not
evolve after that time. For exactly one $i$, $R^{-}_{i}(\ell_{max})$ equals $R(e_{i})$ and $r_{i}(t)$
reaches the end of the interval corresponding to the edge $e_{i}$. Moreover,
for all $t\in [0,R^{-}(\ell_{max})]$, we have
$\sum_{i=1}^{n}r_{i}(t)=t$.

The folowing lemma sums up some easy properties of this process: 

\begin{lemma}
\label{LemExplor}
For $i\in \lbrace 1,\dots,n\rbrace$, the process $r_{i}(t)$ is non-decreasing continuous with values in $[0,R(e_{i})]$.
$r_{i}(0)=0$. For $t$ large enough, $r_{i}(t)=R^{-}_{i}(\ell_{max})$ and 
$\tilde{\phi}_{z_{i}(r_{i}(t))}=\tilde{\phi}_{z_{i}(R^{-}_{i}(\ell_{max}))}=0$.
Almost surely, there exactly one $i\in \lbrace 1,\dots,n\rbrace$, such that $R^{-}_{i}(\ell_{max})=R(e_{i})$.

The process $(r_{i}(t))_{1\leq i\leq n, t\geq 0}$ is an exploration of the metric graph GFF $\tilde{\phi}$ such that 
for all $t\geq 0$,
\begin{equation}
\label{EqSameLT}
L_{1}(r_{1}(t))=\dots=L_{n}(r_{n}(t)).
\end{equation}

Almost surely, for all $t\geq 0$, there is at most one $i\in \lbrace 1,\dots,n\rbrace$ such that
$r_{i}(t)>0$ and $\tilde{\phi}_{z_{i}(r_{i}(t))}\neq 0$. In particular, if $h$ is non-negative on $A$, then for any
$i,j\in\lbrace 1,\dots,n\rbrace$, $\tilde{\phi}_{z_{i}(r_{i}(t))}\tilde{\phi}_{z_{j}(r_{j}(t))}\geq 0$.
\end{lemma}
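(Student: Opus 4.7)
The plan is to check each assertion by unpacking the definition of $T_i$ and its generalized inverse $r_i$; most follow essentially by inspection.

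First I would verify the continuity, monotonicity and starting value of $r_i$. On $[0,\rho^0_i]$, $T_i$ is affine with slope $1$. On $(\rho^0_i,R^-_i(\ell_{max})]$, each increment $dr$ contributes $dr$ to $T_i$ through the first term $r-R^-_i(L_i(r))$ on each plateau of $L_i$, and the second term $R^-(L_i(r))$ is non-decreasing in $r$ because each new zero of $\tilde{\phi}$ on edge $i$ raises $L_i$, forcing the other $R^-_j$ to weakly increase. $T_i$ has a jump at $\rho^0_i$ of height $\sum_{j>i}\rho^0_j$, which the generalized inverse absorbs as a plateau of $r_i$ at $\rho^0_i$ on the time window during which the edges $e_{i+1},\dots,e_n$ are being processed in phase $1$. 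The terminal identity $r_i(t)=R^-_i(\ell_{max})$ for $t\ge R^-(\ell_{max})$ is then immediate. For the zero-at-endpoint claim, $R^-_i(\ell_{max})$ is either a point of strict local-time increase of $L_i$ (hence a zero of $\tilde{\phi}$) or the left endpoint of a plateau of $L_i$ at level $\ell_{max}$ (again a zero, being the last zero before the plateau); in both cases $\tilde{\phi}_{z_i(R^-_i(\ell_{max}))}=0$. For the ``exactly one $i$'' identification, this $i$ is the a.s.\ unique minimizer $i_*$ of the $L_j(R(e_j))$: for $j\ne i_*$ one has $L_j(R(e_j))>\ell_{max}$ strictly, so the first hitting time of $\ell_{max}$ on edge $j$ occurs strictly before $R(e_j)$, whereas for $j=i_*$ the hitting of $\ell_{max}$ is pushed all the way to $R(e_{i_*})$.

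The equal-local-time identity \eqref{EqSameLT} is built into the very definition of $T_i$: the second-piece formula is precisely the parameterization under which $\sum_i r_i(t)=t$ and $L_1(r_1(t))=\dots=L_n(r_n(t))$. For the exploration property, the event $\{\forall i,\,r_i(t)\le r_i\}$ lies in $\mathcal{F}(r_1,\dots,r_n)$ because, as long as the trajectory $r(\cdot)$ stays inside $\prod_i[0,r_i]$, its entire evolution on $[0,t]$ is a deterministic functional of $\rho^0_i\wedge r_i$, of the restrictions $L_i|_{[0,r_i]}$, and of the induced values $R^-_j(\ell)$ for $\ell\le L_j(r_j)$, all of which are measurable with respect to $\tilde{\phi}$ restricted to $\bigcup_i[x_i,z_i(r_i)]$.

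Finally, for the ``at most one $i$ with $r_i(t)>0$ and $\tilde{\phi}_{z_i(r_i(t))}\ne 0$'' assertion I would split into phases. In phase $1$ ($t\le\sum_j\rho^0_j$) only the currently active edge $i$ can have $r_i(t)\in(0,\rho^0_i]$ with a nonzero field value, since edges already processed satisfy $\tilde{\phi}_{z_j(\rho^0_j)}=0$ while edges not yet started satisfy $r_j(t)=0$. In phase $2$ ($t>\sum_j\rho^0_j$), exactly one edge at a time is traversing an excursion, while every inactive edge $j$ sits at $R^-_j(\ell(t))$, a point of strict local-time increase and therefore a zero of $\tilde{\phi}$. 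The sign corollary under $h\ge 0$ is obtained by further noting that in phase $1$ the active edge's field value inherits the sign of $h(x_i)\ge 0$ (the bridge has not yet touched zero), while the not-yet-started edges contribute $\tilde{\phi}_{z_j(0)}=h(x_j)\ge 0$; in phase $2$ all $r_j(t)>0$ so at most one factor is nonzero and each product $\tilde{\phi}_{z_i(r_i(t))}\tilde{\phi}_{z_j(r_j(t))}$ automatically vanishes for $i\ne j$.

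The main technical delicacy I anticipate is the measurability statement inside the exploration property: the ``deterministic-functional'' argument is conceptually clear but requires a careful case analysis of the piecewise definition of $T_i$ and of how it interacts with restrictions of $\tilde{\phi}$, in particular to handle boundary cases where $r_i$ coincides with $\rho^0_i$ or with the boundary of an excursion of $L_i$.
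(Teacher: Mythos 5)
Your proof follows the same line as the paper's (unpacking $T_i$ and its generalised inverse, checking each claim directly, splitting into the phase before and after all $\rho^0_j$ have been reached). There are two places where your argument has a genuine gap; the second is the more important one since it is the only claim whose proof the paper actually spells out.

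\textbf{The ``exactly one $i$'' claim.} You say that for the minimiser $i_*$ of $L_j(R(e_j))$ ``the hitting of $\ell_{\max}$ is pushed all the way to $R(e_{i_*})$''. That is not what the definition gives you: $R^-_{i_*}(\ell_{\max}) = \sup\{r : L_{i_*}(r) < \ell_{\max}\}$ is the \emph{first} time $L_{i_*}$ reaches its terminal value, which is the last zero of $\tilde\phi$ on the edge $e_{i_*}$, and that last zero is strictly before $y_{i_*}$ whenever $\tilde\phi_{y_{i_*}} \ne 0$ (which holds a.s., since $y_{i_*}\notin A$). In fact, your own (correct) argument for the claim $\tilde\phi_{z_i(R^-_i(\ell_{\max}))}=0$ applies to \emph{every} $i$ and therefore rules out $R^-_i(\ell_{\max})=R(e_i)$ for any $i$, because $\tilde\phi_{y_i}\ne 0$ a.s. So either the first-hitting description you give, or the identification with $R(e_i)$, has to be given up; as written the two halves of your paragraph contradict each other, and ``pushed all the way to $R(e_{i_*})$'' is not justified.

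\textbf{The ``at most one active nonzero $i$'' claim.} In phase $2$ you assert that ``exactly one edge at a time is traversing an excursion'' and that every inactive edge sits at a zero. The assertion is exactly the thing that needs a proof: a priori two edges $i\ne j$ could simultaneously be strictly inside excursions if their excursion local-time levels coincided, i.e.\ if $\ell\mapsto R^-_i(\ell)$ and $\ell\mapsto R^-_j(\ell)$ had a common discontinuity level. The paper's proof identifies this as the key point and disposes of it by noting that the pair $(\tilde\phi_{z_i(\cdot)},\tilde\phi_{z_j(\cdot)})$ is absolutely continuous with respect to two independent Brownian motions, for which the discontinuity sets of the inverse local times at $0$ are a.s.\ disjoint; hence, a.s., the discontinuity sets of $R^-_i$ and $R^-_j$ are disjoint for $i\ne j$. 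Without this input your ``exactly one at a time'' is an unsupported leap, and the whole last paragraph of the lemma rests on it. Your measurability argument for the exploration property is directionally the same as the paper's, but the paper replaces your ``deterministic functional'' heuristic with an explicit formula for $\{\forall i,\ r_i(t)\le r_i\}$ in terms of $\underline\ell(r_1,\dots,r_n)$ and $\underline i$; the formula is worth writing down, since your phrasing (``as long as the trajectory stays inside, its evolution is a functional of\ldots'') is close to circular as stated.

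Finally, a small remark: the jump of $T_i$ at $\rho^0_i$ that you identify is real, but note that the paper establishes the continuity of $r_i$ purely from strict monotonicity of $T_i$ (jumps of a strictly increasing function become plateaus of its generalised inverse, which is still continuous), so the discussion of ``absorbing'' the jump is fine but not strictly needed once strict monotonicity across $\rho^0_i$ is checked as the paper does.
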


\begin{proof}
By definition, $r_{i}(t)\in[0,R^{-}_{i}(\ell_{max})]\subseteq [0,R(e_{i})]$, and for $t>R^{-}(\ell_{max})$, $r_{i}(t)=R^{-}_{i}(\ell_{max})$.
Also, by definition, $r_{i}(t)$ is non-decreasing. To show that it is continuous, we need to check that $T_{i}(r)$ is strictly increasing and that
$r_{i}(R^{-}(\ell_{max}))=R^{-}_{i}(\ell_{max})$. $T_{i}(r)$ is clearly strictly increasing on $(0,\rho_{i}^{0}]$. For $r>\rho_{i}^{0}$,
\begin{displaymath}
T_{i}(r)=r+\sum_{j\neq i}R^{-}_{j}(L_{i}(r)).
\end{displaymath}
The first term $r$ is strictly increasing and the others are non-decreasing. Thus $T_{i}(r)$ is strictly increasing on $(\rho_{i}^{0},R^{-}(\ell_{max})]$.
Moreover, $R^{-}_{j}(L_{j}(r))\geq\rho^{0}_{j}$. Thus, for $r>\rho_{i}^{0}$,
\begin{displaymath}
T_{i}(r)>\sum_{j=1}^{n}\rho^{0}_{j}=T_{i}(\rho_{i}^{0}).
\end{displaymath}
So $T_{i}(r)$ is strictly increasing on $(0,R^{-}_{i}(\ell_{max})]$ and consequently $r_{i}(t)$ is continuous on $[0,R^{-}_{i}(\ell_{max})]$ and 
$r_{i}(0)=0$. Moreover,
\begin{displaymath}
T_{i}(R_{i}^{-}(\ell_{max}))=R^{-}(L_{i}(R^{-}_{i}(\ell_{max})))=R^{-}(\ell_{max}).
\end{displaymath}
Thus $r_{i}(R^{-}(\ell_{max}))=R_{i}^{-}(\ell_{max})$ and $r_{i}(t)$ is continuous on $[0,+\infty)$.

Let us check now \eqref{EqSameLT}. If $t\leq \sum_{i=1}^{n}\rho_{i}^{0}$, then $r_{i}(t)\leq \rho_{i}^{0}$ and
\begin{displaymath}
L_{1}(r_{1}(t))=\dots=L_{n}(r_{n}(t))=0.
\end{displaymath}
If $t\geq R^{-}(\ell_{max})$, then $r_{i}(t)=R^{-}_{i}(\ell_{max})$ and
\begin{displaymath}
L_{1}(r_{1}(t))=\dots=L_{n}(r_{n}(t))=\ell_{max}.
\end{displaymath}
Consider now that
\begin{displaymath}
\sum_{i=1}^{n}\rho_{i}^{0}<t<R^{-}(\ell_{max}).
\end{displaymath}
Assume that there is $i\neq j$ such that $L_{i}(r_{i}(t))<L_{j}(r_{j}(t))$.
Let $\ell\in (L_{i}(r_{i}(t)),L_{j}(r_{j}(t)))$. Then $R^{-}_{j}(\ell)<r_{j}(t)$.
Moreover,
\begin{displaymath}
t\leq T_{i}(r_{i}(t)) < R^{-}(\ell)=T_{j}(R^{-}_{j}(\ell)).
\end{displaymath}
By definition of $r_{j}$, this means that $r_{j}(t)\leq R^{-}_{j}(\ell)$, which contradicts $R^{-}_{j}(\ell)<r_{j}(t)$.
This means that we cannot have $L_{i}(r_{i}(t))<L_{j}(r_{j}(t))$.

Let us check now that $(r_{i}(t))_{1\leq i\leq n, t\geq 0}$ is an exploration of $\tilde{\phi}$.
Let $t>0$. Let $(r_{1},\dots,r_{n})\in\prod_{i=1}^{n}[0,R(e_{i})]$. Define
\begin{displaymath}
\underline{\ell}(r_{1},\dots,r_{n}):=\min(L_{1}(r_{1}),\dots,L_{n}(r_{n})).
\end{displaymath}
Let $\underline{i}$ be the smallest $i$ such that $r_{i}<\rho_{i}^{0}$ if such exists, $n$ otherwise. The random variables
$\underline{\ell}(r_{1},\dots,r_{n})$ and $\underline{i}$ are $\mathcal{F}(r_{1},\dots,r_{n})$-measurable
If $\underline{\ell}(r_{1},\dots,r_{n})=0$, then 
\begin{equation}
\label{EqCondExplor}
r_{i}(t)\leq r_{i}~\text{for all}~i\in \lbrace 1,\dots,n\rbrace
\end{equation}
is equivalent to
\begin{displaymath}
t\leq r_{\underline{i}}\wedge \rho_{\underline{i}}^{0}.
\end{displaymath}
If $\underline{\ell}(r_{1},\dots,r_{n})>0$, then \eqref{EqCondExplor} is equivalent to
\begin{displaymath}
t\leq R^{-}(\underline{\ell}(r_{1},\dots,r_{n}))+
\sum_{\substack{1\leq i\leq n\\L_{i}(r_{i})=\underline{\ell}(r_{1},\dots,r_{n})}}
(r_{i}-R_{i}^{-}(\underline{\ell}(r_{1},\dots,r_{n}))).
\end{displaymath}
Thus the event \eqref{EqCondExplor} is $\mathcal{F}(r_{1},\dots,r_{n})$-measurable.

If $\tilde{\phi}_{z_{i}(r_{i}(t))}\neq 0$, then either $r_{i}(t)<\rho^{0}_{i}$ or
$r_{i}(t)>\rho^{0}_{i}$ and $R^{-}_{i}(\ell)$ has a discontinuity at $\ell=L_{i}(r_{i}(t))$. If 
$r_{i}(t)<\rho^{0}_{i}$, then by definition, for $j\neq i$, either $r_{j}(t)=0$ or
$r_{j}(t)=\rho^{0}_{j}$ and $\tilde{\phi}_{z_{i}(r_{j}(t))}=\tilde{\phi}_{z_{i}(\rho^{0}_{j})}=0$.
Consider now the case when $r_{i}(t)>\rho^{0}_{i}$ and $R^{-}_{i}(\ell)$ has a discontinuity at 
$\ell=L_{i}(r_{i}(t))$.
To show that for $j\neq i$, $\tilde{\phi}_{z_{i}(r_{j}(t))}=0$, we need to show that 
$R^{-}_{j}(\ell)$ is continuous at $\ell=L_{i}(r_{i}(t))=L_{j}(r_{j}(t))$. So we need to check that
for $i\neq j\in \lbrace 1,\dots,n\rbrace$, almost surely, the discontinuity sets of $R^{-}_{i}(\ell)$ and $R^{-}_{j}(\ell)$ are
disjoint. This comes from the fact that the law of $(\tilde{\phi}_{z_{i}(\cdot)},\tilde{\phi}_{z_{i}(\cdot)})$
is absolutely continuous with respect the law of two independent Brownian motions and the fact that for two independent Brownian motions the discontinuity
sets of the inverse of the local time at zero (corresponding to the local time levels of Brownian excursions away from zero) are disjoint.
\end{proof}

Next is the key lemma for the proof of Proposition \ref{thmGeneralisedLevy}.

\begin{lemma}
\label{LemGenLevySDE}
Assume that the boundary condition $h$ is non-negative on $A$. Let $I_{max}$ denote the maximum over $1\leq i\leq n$ of the quantities $ I_{y_{i},A}(\tilde{\phi})$ 
and let $\rho_i$ denote the first $r$ at which $I_{z_{i}(r),A}$ reaches $I_{max}$. 
Then, the processes
\begin{displaymath}
(\vert\tilde{\phi}_{z_{i}(r)}\vert, \delta_{z_{i}(r),A})
_{1\leq i\leq n, 0\leq r\leq R_{i}^{-}(\ell_{max})}
\end{displaymath}
and
\begin{displaymath}
(\tilde{\phi}_{z_{i}(r)}-I_{z_{i}(r),A}, -I_{z_{i}(r),A})
_{1\leq i\leq n,  0 \le r \le \rho_i}
\end{displaymath}
have the same distribution.
\end{lemma}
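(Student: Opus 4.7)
The strategy is to deploy the special exploration $(r_i(t))_{1\leq i\leq n,\,t\geq 0}$ from Lemma \ref{LemExplor}, along which $L(t):=L_i(r_i(t))$ is common across all $i$, and to exhibit a pathwise identification of the two processes in the statement via a coupled metric graph GFF $\hat{\phi}$ of the same law as $\tilde{\phi}$. Set $\psi_i(t):=|\tilde{\phi}_i(t)|-L(t)$. Applying Tanaka's formula to $|\tilde{\phi}_i(t)|$ using the semi-martingale decomposition of Lemma \ref{LemSDE}, the key input is the last statement of Lemma \ref{LemExplor}: at every time $t$, at most one $\tilde{\phi}_j(t)$ is nonzero, and $dr_i(t)>0$ only when $i$ is the active index. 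Consequently $\hbox{sgn}(\tilde{\phi}_i(t))(\tilde{\phi}_j(t)-\tilde{\phi}_i(t))\,dr_i(t)$ simplifies to $(|\tilde{\phi}_j(t)|-|\tilde{\phi}_i(t)|)\,dr_i(t)$; since $L(t)$ is common across $i$ and cancels in differences, one obtains
\[
\psi_i(t)=h(x_i)+\tilde{N}_i(t)+\sum_{j\neq i}\int_0^t C^{\rm eff}_{ij}(s)\bigl(\psi_j(s)-\psi_i(s)\bigr)\,dr_i(s),
\]
where $\tilde{N}_i(t):=\int_0^t\hbox{sgn}(\tilde{\phi}_i(s))\,dM_i(s)$ are orthogonal continuous martingales with $\langle\tilde{N}_i,\tilde{N}_i\rangle_t=r_i(t)$ (because $\hbox{sgn}^2=1$ a.e. on the support of $dr_i$). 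By Lemma \ref{LemSDE2}, there exists a metric graph GFF $\hat{\phi}$ with boundary condition $h$ on $A$, coupled to $\tilde{\phi}$, such that $\hat{\phi}_{z_i(r_i(t))}=\psi_i(t)$ on the explored region.

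The next step is to establish $L(t)=\delta_{z_i(r_i(t)),A}(\tilde{\phi})=-I_{z_i(r_i(t)),A}(\hat{\phi})$. The case $L(t)=0$ is immediate (then $\psi_i\geq 0$ along the direct path, giving $\tilde{I}\geq 0$ hence $I=0$, and $\delta=0$). For $L(t)>0$, the direct path on edge $i$ from $z_i(r_i(t))$ to $x_i$ accumulates local time $L_i(r_i(t))=L(t)$ and, by the Skorokhod-type identity $\min_{s\leq t}\psi_i(s)=-L(t)$ (attained at zeros of $\tilde{\phi}_i$), achieves $\min\hat{\phi}=-L(t)$; this gives $\delta\leq L(t)$ and $\tilde{I}\geq -L(t)$. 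For the reverse inequalities, observe that the explored region at time $t$ is a union of arcs $[x_j,z_j(r_j(t))]$, pairwise disjoint except possibly at $A$, each connected to a single boundary vertex $x_j\in A$. Any continuous path from $z_i(r_i(t))$ to $A$ must therefore, in its final leg, fully traverse some arc $[x_e,z_e(r_e(t))]$ from an entry point $z_e(r_e(t))$ to the endpoint $x_e\in A$. Since $L(t)>0$ forces $r_e(t)>\rho_e^0$, this full arc carries total local time exactly $L(t)$ and contains zeros of $\tilde{\phi}_e$ at which $\hat{\phi}=-L(t)$. Thus every such path has local time $\geq L(t)$ and $\min\hat{\phi}\leq -L(t)$, yielding $\delta\geq L(t)$ and $\tilde{I}\leq -L(t)$; hence $I=\min(0,\tilde{I})=-L(t)$.

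Combining Steps 1 and 2, along the exploration one has pathwise
\[
\bigl(|\tilde{\phi}_{z_i(r_i(t))}|,\,\delta_{z_i(r_i(t)),A}(\tilde{\phi})\bigr)=\bigl(\hat{\phi}_{z_i(r_i(t))}-I_{z_i(r_i(t)),A}(\hat{\phi}),\,-I_{z_i(r_i(t)),A}(\hat{\phi})\bigr).
\]
Reparameterizing by $r=r_i(t)\in[0,R_i^-(\ell_{max})]$ yields the first process of the statement. Under the identification $-I(\hat{\phi})=L$, one has $\ell_{max}=-I_{max}(\hat{\phi})$ and $R_i^-(\ell_{max})=\rho_i(\hat{\phi})$, so the right-hand side is precisely the second process in the statement evaluated for $\hat{\phi}$ on $[0,\rho_i(\hat{\phi})]$. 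Since $\hat{\phi}$ is distributed as a metric graph GFF with boundary $h$ on $A$, this has the same distribution as the second process for $\tilde{\phi}$. The main obstacle is the geometric path-counting argument in the second step: verifying that the final leg of any continuous path to $A$ must traverse a complete explored arc, so that the freedom of $\hat{\phi}$ on the unexplored region cannot be exploited to produce a shorter $\delta$ or larger $\tilde{I}$; the Skorokhod identity $\min_{s\leq t}\psi_i(s)=-L(t)$ and the disjoint-arc structure of the explored region are the essential ingredients.
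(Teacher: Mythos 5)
Your proof follows essentially the same route as the paper's: apply Tanaka's formula to $|\tilde\phi_i(t)|$ along the synchronized exploration of Lemma~\ref{LemExplor}, use the sign/disjointness structure to show that $\psi_i=|\tilde\phi_i|-L$ satisfies the SDE \eqref{EqSDEX}--\eqref{EqSDEXQV}, invoke Lemma~\ref{LemSDE2} to extend $\psi_i$ to a metric graph GFF $\hat\phi$, and then read off the identities $L(t)=\delta_{z_i(r_i(t)),A}(\tilde\phi)=-I_{z_i(r_i(t)),A}(\hat\phi)$ to obtain the pathwise identification on the explored region. The only noticeable difference is cosmetic: the paper justifies $\mathrm{sgn}(\tilde\phi_i)(\tilde\phi_j-\tilde\phi_i)=|\tilde\phi_j|-|\tilde\phi_i|$ via the ``same sign'' property $\tilde\phi_i\tilde\phi_j\geq0$, whereas you invoke ``at most one $\tilde\phi_j$ nonzero on the support of $dr_i$''; both fail only on $dr_i$-null sets of times and so yield the same integral. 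The main thing you add is an explicit verification of the two identifications $\delta=L$ and $-I=L$ (via the Skorokhod-type minimum identity and the observation that any path to $A$ must traverse a full explored arc in its final leg), as well as the matching of the time ranges $R_i^-(\ell_{\max})=\rho_i(\hat\phi)$; the paper states these without detail. Your argument for the final-leg traversal is sound: the arcs $[x_j,z_j(r_j(t))]$ are precisely the explored portions of all edges adjacent to $A$, they can only be entered from outside at $z_j(r_j(t))$, and each carries total local time $L(t)$ and a zero of $\tilde\phi_j$ at which $\hat\phi=-L(t)$, so every path to $A$ is forced to accumulate at least $L(t)$ and to dip to $-L(t)$.
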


\begin{proof}
Let $(r_{i}(t))_{1\leq i\leq n, t\geq 0}$ be the exploration of the metric graph GFF $\tilde{\phi}$
that appears in Lemma \ref{LemExplor}. According to Tanaka's formula 
(see Theorem 1.2, chapter VI, §1 in \cite{RevuzYor1999BMGrundlehren}), we have the following semi-martingale decomposition for $\vert\tilde{\phi}_{z_{i}(r_{i}(t))}\vert$:
\begin{displaymath}
\vert\tilde{\phi}_{z_{i}(r_{i}(t))}\vert=
h(x_{i})+\int_{0}^{t}\hbox {sgn}(\tilde{\phi}_{z_{i}(r_{i}(s))}) 
d\tilde{\phi}_{z_{i}(r_{i}(s))}+L_{i}(r_{i}(t)).
\end{displaymath}
With the notation of Lemma \ref{LemSDE},
\begin{eqnarray*}
\lefteqn { 
\vert\tilde{\phi}_{i}(t)\vert-L_{i}(r_{i}(t))} \\
&=&
h(x_{i})+\int_{0}^{t}\hbox {sgn}(\tilde{\phi}_{i}(s)) 
d\tilde{\phi}_{i}(s)
\\&=&h(x_{i})+\int_{0}^{t}\hbox {sgn}(\tilde{\phi}_{i}(s))dM_{i}(s)
+\sum_{j\neq i}\int_{0}^{t}\hbox {sgn}(\tilde{\phi}_{i}(s))
C^{\rm eff}_{ij}(s)(\tilde{\phi}_{j}(s)-\tilde{\phi}_{i}(s)) dr_{i}(s).
\end{eqnarray*}
By construction, $L_{i}(r_{i}(s))=L_{j}(r_{j}(s))$, and according to Lemma \ref{LemExplor}, 
$\tilde{\phi}_{i}(s)\tilde{\phi}_{j}(s)\geq 0$.
Thus,
\begin{eqnarray*}
\lefteqn { \vert\tilde{\phi}_{i}(t)\vert-L_{i}(r_{i}(t))}\\
&=& h(x_{i})+\int_{0}^{t}\hbox {sgn}(\tilde{\phi}_{i}(s))dM_{i}(s)
+\sum_{j\neq i}\int_{0}^{t}
C^{\rm eff}_{ij}(s)(\vert\tilde{\phi}_{j}(s)\vert-L_{j}(r_{j}(s))
-\vert\tilde{\phi}_{i}(s)\vert+L_{i}(r_{i}(s))) dr_{i}(s).
\end{eqnarray*}
Thus, the processes $(\vert\tilde{\phi}_{i}(t)\vert-L_{i}(r_{i}(t)))_{1\leq i\leq n, t\geq 0}$
satisfy  \eqref{EqSDEX} and \eqref{EqSDEXQV}. Let $T>0$. According to Lemma
\ref{LemSDE2}, $(\vert\tilde{\phi}_{i}(t)\vert-L_{i}(r_{i}(t)))_{1\leq i\leq n, 0\leq t\leq T}$ can be extended to a GFF $\hat{\phi}^{T}$ on the whole metric graph $\widetilde{\mathcal{G}}$. By letting $T$ tend to infinity, we get that $(\vert\tilde{\phi}_{i}(t)\vert-L_{i}(r_{i}(t)))_{1\leq i\leq n, t\geq 0}$
extends to a GFF $\hat{\phi}$ on whole $\widetilde{\mathcal{G}}$.
For that GFF we have
\begin{displaymath}
L_{i}(r_{i}(t))=-I_{z_{i}(r_{i}(t)),A}(\hat{\phi}).
\end{displaymath}
Moreover, $L_{i}(r_{i}(t))=\delta_{z_{i}(r_{i}(t)),A}(\tilde{\phi})$. 
This implies our lemma.
\end{proof}

The idea is now to use the previous lemma to prove Proposition \ref {thmGeneralisedLevy} by induction on the number of edges of the graph. 
We first need to check that the proposition holds in the case where the graph consists only of one edge, which is the case of the Brownian bridge described at the beginning of the introduction: 
\begin{lemma}[Bertoin-Pitman, \cite {BertoinPitman}]
\label{LemLevyBridge}
Proposition \ref {thmGeneralisedLevy} holds when the graph $\mathcal{G}$ is just two vertices $x_{1}$ and $x_{2}$ with only one edge $e$
joining them.
\end{lemma}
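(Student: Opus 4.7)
The statement is precisely Theorem 4.1 of \cite{BertoinPitman}, recalled in the introduction, and the cleanest route is simply to invoke it. Parametrise the edge by $[0,T]$ with $T=R(e)$, so that $\tilde\phi$ is a Brownian bridge from $w_0:=h(x_1)\ge 0$ to $w_T:=h(x_2)\ge 0$ and $L$ denotes its local time at zero. For $x=z_1(r)$ with $r\in[0,T]$, the definitions of $\delta_{\cdot,A}$ and $I_{\cdot,A}$ unfold to
\[
\delta_{x,A}=\min(L_r,\,L_T-L_r),\qquad I_{x,A}=\min\!\Bigl(0,\,\max\bigl(\min_{[0,r]}\tilde\phi,\,\min_{[r,T]}\tilde\phi\bigr)\Bigr),
\]
because on a single edge the only simple continuous paths from $x$ to $A$ go left to $x_1$ or right to $x_2$, and any non-simple path only increases the accumulated local time at zero (respectively only decreases the running infimum of $\tilde\phi$). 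These are exactly the two processes appearing in Bertoin--Pitman's theorem, which therefore yields the claim.

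A self-contained alternative is to rerun the two-sided exploration used in Lemma \ref{LemGenLevySDE} directly on the one-edge graph. The formal obstruction is that Section \ref{S2}'s standing assumption disallows an edge joining two boundary vertices; this is dodged by inserting a dummy interior vertex at some $s\in(0,T)$, which subdivides $e$ into two admissible edges and leaves the law of $\tilde\phi$ unchanged. Applying Lemma \ref{LemGenLevySDE} with $n=2$ to the simultaneous exploration from $x_1$ and $x_2$ constrained by $L_1(r_1(t))=L_2(r_2(t))$ then identifies $(|\tilde\phi|,\delta_{\cdot,A})$ with $(\tilde\phi-I_{\cdot,A},-I_{\cdot,A})$ on the explored first-passage region $\{x:\delta_{x,A}\le\ell_{\max}\}$, where $\ell_{\max}=\min(L_s,L_T-L_s)$. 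The only step not immediately provided is the extension to the unexplored middle arc; this is routine, because on that arc $\tilde\phi$ stays away from zero by construction, so $\delta_{\cdot,A}\equiv\ell_{\max}$ on it and, by the Markov property of the metric graph GFF, the residual $|\tilde\phi|-\delta_{\cdot,A}$ is a Brownian bridge with the correct boundary values, matching the corresponding bridge piece of the target process. The only genuine technical burden of this second route is the bookkeeping across the subdivision, which is why the direct appeal to \cite{BertoinPitman} is preferable.
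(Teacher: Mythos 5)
Your preferred route, directly invoking Theorem 4.1 of \cite{BertoinPitman}, is correct and complete. The identification of the functionals on a single edge is right: simple one-sided paths attain the infimum/supremum, so $\delta_{z_1(r),A}=\min(L_r,L_T-L_r)$ and $I_{z_1(r),A}=\min\bigl(0,\max(\min_{[0,r]}\tilde\phi,\min_{[r,T]}\tilde\phi)\bigr)$, which is exactly the Bertoin--Pitman pair. The paper itself attributes the statement to \cite{BertoinPitman} and agrees this citation suffices, but it deliberately gives a different, self-contained argument (the text explicitly says its proof ``differs from'' theirs): it builds a two-sided exploration $(r_1(t),r_2(t))$ on the single edge, constrained by $L_1(r_1(t))=L_2(r_2(t))$ and $r_1(t)+r_2(t)\le R(e)$, so the two particles sweep the whole interval and meet at the median level of the local time; the semimartingale decomposition of $|\tilde\phi|-\delta$ from Lemma~\ref{LemSDE} with $C^{\rm eff}_{12}(s)=(R(e)-r_1(s)-r_2(s))^{-1}$ together with Lemma~\ref{LemSDE2} and a stopping-at-$\varepsilon$ limit then concludes. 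This independent proof is what anchors the induction in Proposition~\ref{thmGeneralisedLevy}.

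Your alternative Route~2 has a genuine gap, and it is worth being explicit about why it fails, because the failure mode illustrates exactly what the paper's meeting-in-the-middle exploration is engineered to avoid. After inserting a dummy vertex at $s$ and running the Lemma~\ref{LemExplor}/Lemma~\ref{LemGenLevySDE} exploration with $n=2$, the explored set is $[0,R_1^-(\ell_{\max})]\cup[T-R_2^-(\ell_{\max}),T]$ with $\ell_{\max}=\min(L_s,L_T-L_s)$, and the unexplored middle arc carries residual local time at zero equal to $L_T-2\ell_{\max}$, which is a.s.\ strictly positive when $s$ is not the local-time median. So $\tilde\phi$ does \emph{not} stay away from zero on that arc, and $\delta_{\cdot,A}$ is \emph{not} identically $\ell_{\max}$ there: it climbs up to $L_T/2$ at the median. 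Patching via the Markov property leaves precisely a one-edge sub-problem on the unexplored arc, so the dummy-vertex reduction is circular: the induction of Proposition~\ref{thmGeneralisedLevy} drops from two edges back to one edge, i.e.\ back to Lemma~\ref{LemLevyBridge} itself. The paper sidesteps this by proving the one-edge case with a purpose-built exploration whose two particles collide, leaving no unexplored residue.
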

As already mentioned in the introduction, this result is due to Bertoin and Pitman (Theorem 4.1 in \cite{BertoinPitman}) and there are several possible ways to derive it. 
{In \cite{BertoinPitman}, it was obtained via a combination of several classical transforms (such as the Verwaat transform) between the Brownian bridge, the Brownian excursion and the Brownian meander.}
The following short proof {differs from} that of \cite {BertoinPitman}, and is quite natural from our general metric graph perspective. 

\begin{proof}
For $r\in [0,R(e)]$, denote $z_{1}(r)$ the point at distance $r$ from $x_{1}$ and $z_{2}(r)$ the point
at distance $r$ from $x_{2}$. $L_{1}(r)$ respectively $L_{2}(r)$ will denote the local time at zero accumulated by $\tilde{\phi}$ on $[x_{1},z_{1}(r)]$, respectively on $[z_{2}(r),x_{2}]$.

As described previously (see Lemma \ref{LemExplor}), one can construct the pair of continuous non-decreasing processes
$(r_{1}(t),r_{2}(t))_{t\geq 0}$ such that
\begin{itemize}
\item $r_{1}(0)=r_{2}(0)=0$,
\item $(r_{1}(t),r_{2}(t))_{t\geq 0}$ is an exploration of $\tilde{\phi}$,
that is to say the event $\lbrace r_{1}(t)\leq r_{1}, r_{2}(t)\leq r_{2}\rbrace$ is measurable
with respect the restriction of $\tilde{\phi}$ to
$[x_{1},z_{1}(r)]\cup [z_{2}(r),x_{2}]$,
\item for all $t\geq 0$, $L_{1}(r_{1}(t))=L_{2}(r_{2}(t))$,
\item for all $t\geq 0$, $z_{1}(r_{1}(t))\leq z_{2}(r_{2}(t))$,
that is to say $r_{1}(t)+r_{2}(t)\leq R(e)$,
\item for $t$ large enough $z_{1}(r_{1}(t))=z_{2}(r_{2}(t))$ and
$(r_{1}(t),r_{2}(t))$ does not evolve any more,
\item for all $t\geq 0$, 
$\tilde{\phi}_{z_{1}(r_{1}(t))}\tilde{\phi}_{z_{2}(r_{2}(t))}\geq 0$.
\end{itemize}
By construction,
$L_{i}(r_{i}(t))=\delta_{z_{i}(r_{i}(t)),A}$, $i=1,2$.

The process $(\tilde{\phi}_{z_{1}(r_{1}(t))},\tilde{\phi}_{z_{2}(r_{2}(t))})_{t\geq 0}$
satisfies the SDE \eqref{EqSDEExplor},\eqref{EqVarQuad}, where
\begin{displaymath}
C^{\rm eff}_{12}(s)=(R(e)-r_{1}(s)-r_{2}(s))^{-1}.
\end{displaymath}

For $\varepsilon\in (0,R(e))$, $\tau^{\varepsilon}$ will denote the stopping time when
$r_{1}(t)+r_{2}(t)$ hits $R(e)-\varepsilon$. The stopped process
$(\vert\tilde{\phi}_{z_{1}(r_{1}(t\wedge\tau^{\varepsilon}))}\vert
-L_{1}(r_{1}(t\wedge\tau^{\varepsilon})),
\vert\tilde{\phi}_{z_{2}(r_{2}(t\wedge\tau^{\varepsilon}))}\vert
-L_{2}(r_{2}(t\wedge\tau^{\varepsilon})))_{t\geq 0}$ satisfies the SDE
\eqref{EqSDEX},\eqref{EqSDEXQV} and one can apply Lemma
\ref{LemSDE2}, to show that by interpolating the stopped process above
by an independent Brownian bridge of length $\varepsilon$ from
$-L_{1}(r_{1}(\tau^{\varepsilon}))$ to
$-L_{2}(r_{2}(\tau^{\varepsilon}))=-L_{1}(r_{1}(\tau^{\varepsilon}))$, one gets
a Brownian bridge of length $R(e)$ from $h(x_{1})$ to $h(x_{2})$. By letting
$\varepsilon$ tend to zero, we get that 
$(\vert\tilde{\phi}_{x}\vert-\delta_{x,A})_{x\in [x_{1},x_{2}]}$ 
is distributed like $\tilde{\phi}$, which is equivalent to the statement of the lemma.
\end{proof}

We are now finally able to prove Proposition \ref {thmGeneralisedLevy}:

\begin{proof}
We will prove the proposition by induction on the number of edges $\vert E\vert$ in the graph. The case where this number is equal to one is covered by Lemma \ref {LemLevyBridge}.

Suppose now that $|E| \ge 2$, and that Proposition \ref {thmGeneralisedLevy} holds for all graphs with fewer edges. 
As explained before, the edges that do join points of $A$ together can be removed (mind that one uses
Lemma \ref {LemLevyBridge} to justify this), so that if there is such an edge in the graph, then the induction hypothesis allows to conclude that the result holds also for the graph $(V,E)$. 

We now assume that no edge of $E$ joins two sites of $A$. 
Using the notation of the beginning of Section \ref{SecLevy} and Lemma
\ref{LemGenLevySDE}, we get that the processes
\begin{displaymath}
(\vert\tilde{\phi}_{z_{i}(r)}\vert - \delta_{z_{i}(r),A})
_{1\leq i\leq n, 0\leq r\leq R_{i}^{-}(\ell_{max})}
\hbox { and } (\tilde{\phi}_{z_{i}(r)})_{1\leq i\leq n, r \le \rho_i}
\end {displaymath} 
where
$I_{max}=\max_{1\leq i\leq n} I_{y_{i},A}$,
have the same distribution. The sets
\begin{equation}
\label{EqExploredSet1}
\left\lbrace z_{i}(r)\Big\vert 1\leq i\leq n, 0\leq r\leq R_{i}^{-}(\ell_{max})\right\rbrace
\end{equation}
and
\begin{equation}
\label{EqExploredSet2}
\left\lbrace z_{i}(r)\Big\vert 1\leq i\leq n, I_{z_{i}(r),A}\geq I_{max}\right\rbrace
\end{equation}
are each made of $n$ lines contained inside $[x_{i},y_{i}]$, $1\leq i\leq n$. 
Moreover, in each case, for exactly one $i$, the
interval $[x_{i},y_{i}]$ is entirely contained in the corresponding set. 
Conditionally on 
$(\tilde{\phi}_{z_{i}(r)})_{1\leq i\leq n, 0\leq r\leq R_{i}^{-}(\ell_{max})}$,
outside the set \eqref{EqExploredSet1}, $\tilde{\phi}$ is distributed like a metric graph GFF
with boundary condition 0 at $z_{i}(R_{i}^{-}(\ell_{max}))$, $1\leq i\leq n$.
Conditionally on
$(\tilde{\phi}_{z_{i}(r)})_{1\leq i\leq n, r < \rho_i }$,
outside the set \eqref{EqExploredSet2}, $\tilde{\phi}$ is distributed like a metric graph GFF
with boundary condition $I_{max}$. Since we already have Lemma \ref{LemGenLevySDE}, to show the proposition, we need to check that it is satisfied by a metric graph GFF on $\widetilde{\mathcal{G}}$ minus 
\eqref{EqExploredSet1} with boundary condition zero. But the metric graph $\widetilde{\mathcal{G}}$ minus 
\eqref{EqExploredSet1} has one less edge, thus we can use the induction hypothesis. This concludes our proof.
\end{proof}

\section {Invariance under rewiring}

%\tableofcontentsit laws}
\label{SecExplicit}
\label{S3}

\subsection {Main statement and some comments}

Let us first state the more general version of Proposition~\ref{PropLaw2Points}. 
Consider a finite graph $\mathcal{G}=(V,E)$ as before, and  assume that the boundary set $A\subseteq V$ contains at least two vertices. We also  consider a partition
of $A$ into two non-empty subsets $\widehat{A}$ and $\widecheck{A}$ so  that the sign of the boundary condition $h$ on $\widehat A$ is constant, and that its sign on $\widecheck A$ is also constant (the value $0$ is allowed for some boundary points, and the 
signs on $\widehat A$ and on $\widecheck A$ do not need to be the same). 
As before, we will consider the effective conductance matrix $C^{\rm eff}_A$ of the circuit corresponding to the boundary $A$. 
We then consider the metric graph Gaussian free field $\tilde{\phi}$ with boundary condition $h$ on $A$, and we more specifically study the random variable
$$\delta_{\widehat{A},\widecheck{A}} = \delta_{\widehat{A},\widecheck{A}} ( \tilde \phi)  := \min_{\hat x \in \widehat A, \check x \in \widecheck A} \delta_{\hat x , \check x } $$
that measures the pseudo-distance between these two parts of the boundary. 

\begin{proposition}
\label{thmDist2Sets}
The law of the non-negative random variable
$\delta_{\widehat{A},\widecheck{A}}$ is described by the fact that for all $\ell > 0$, 
\begin{equation}
\label{EqExplDist}
\mathbb{P}(\delta_{\widehat{A},\widecheck{A}} \geq  \ell)=
\prod_{\substack{\hat{x}\in\widehat{A}\\\check{x}\in\widecheck{A}}}
\exp\bigg(-\frac{1}{2}C^{\rm eff}_A (\hat{x},\check{x})
\big(\vert h(\hat{x})\vert +\vert h(\check{x})\vert+\ell\big)^{2}
+\frac{1}{2}C^{\rm eff}_A (\hat{x},\check{x})\big(h(\hat{x})-h(\check{x})\big)^{2}\bigg).
\end{equation}
\end {proposition}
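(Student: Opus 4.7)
The plan is to first handle the special case $V=A$ (no interior vertices) directly by independence, then reduce the general case to this special case by establishing invariance under a star-mesh transform at each interior vertex.

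In the special case $V=A$, the metric-graph GFF decomposes into independent Brownian bridges, one on each edge, with endpoints prescribed by $h$. Since local times at zero are non-negative, any continuous path from $\widehat{A}$ to $\widecheck{A}$ accumulates at least the local time on any single crossing edge it uses, so
\begin{displaymath}
\delta_{\widehat{A},\widecheck{A}} \;=\; \min_{e\text{ crossing}} L^{(e)},
\end{displaymath}
where $L^{(e)}$ is the local time at zero of the bridge on $e$ and the minimum runs over edges with one endpoint in $\widehat{A}$ and the other in $\widecheck{A}$. By independence of bridges on distinct edges, $\mathbb{P}(\delta_{\widehat{A},\widecheck{A}}\geq\ell)$ factors as a product of $\mathbb{P}(L^{(e)}\geq\ell)$ over crossing edges; each factor is given by~\eqref{EqLocTimeZeroBridge} with $T=R(e)$ and endpoints $h(\hat{x}),h(\check{x})$. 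Grouping parallel edges between the same pair $(\hat{x},\check{x})$ (whose exponents simply add to that of a single edge of conductance $\sum_{e:\hat{x}\check{x}} C(e)$) and using that for $V=A$ the effective matrix satisfies $C^{\rm eff}_A(\hat{x},\check{x})=\sum_{e:\hat{x}\check{x}}C(e)$, one recovers exactly the product~\eqref{EqExplDist}; pairs with no direct edge contribute a trivial factor of $1$, since then $C^{\rm eff}_A(\hat{x},\check{x})=0$.

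For the general case, I would argue by induction on $|V\setminus A|$, showing that the law of $\delta_{\widehat{A},\widecheck{A}}$ is invariant under a star-mesh transform at any interior vertex $v$; such a transform preserves $C^{\rm eff}_A$ and strictly reduces $|V\setminus A|$, so iterating brings us to the base case above. The natural plan for the star-mesh step is to condition on $\tilde{\phi}(v)=u$: the edges incident to $v$ then become conditionally independent Brownian bridges, their local times at zero are controlled by~\eqref{EqLocTimeZeroBridge}, and integrating over $u$ against its conditional Gaussian density (read off from~\eqref{partitionfunction}) should produce the joint law of local times in the star-mesh-transformed graph. Carrying out this Gaussian integration -- checking that after averaging over $u$ one really obtains the same joint distribution of crossing local times as in the transformed graph -- will be the main obstacle. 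A cleaner alternative would be to run the exploration of Lemma~\ref{LemExplor} iteratively, absorbing each newly reached vertex into the boundary, and to invoke Lemmas~\ref{LemSDE} and~\ref{LemSDE2}: the drifts and quadratic variations in the SDE satisfied by the field along such an exploration are written exactly in terms of the effective conductances $C^{\rm eff}_{ij}$ of the growing-boundary network, and these transform consistently under star-mesh reduction. The joint law of the field values and local times produced along the exploration, and hence the law of $\delta_{\widehat{A},\widecheck{A}}$, therefore depends on $\mathcal{G}$ only through $C^{\rm eff}_A$, completing the reduction to the special case.
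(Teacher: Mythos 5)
Your base case ($V=A$) is correct and is essentially the same observation the paper makes after stating Proposition~\ref{reformulation}: with no interior vertices, the bridges on distinct edges are independent, $\delta_{\widehat A,\widecheck A}$ is the minimum over crossing edges of their zero local times, and \eqref{EqLocTimeZeroBridge} gives the product \eqref{EqExplDist}.

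The reduction step is where the proposal goes wrong, and the paper in fact devotes several paragraphs (right after Proposition~\ref{reformulation}) to explaining precisely why your two candidate strategies cannot work. Your first plan is to condition on $\tilde\phi(v)=u$, obtain the joint law of the local times on the edges incident to $v$, and integrate over $u$ to recover the joint law of crossing local times in the star-mesh-reduced graph. This fails: the paper shows on the three-legged star $\mathcal{G}_3$ versus its electrically equivalent triangle that the \emph{joint} law of $(\delta_{\hat x_1,\check x},\delta_{\hat x_2,\check x})$ is \emph{not} the same for the two networks (one computes $\mathbb{P}(\delta_{\hat x_1,\check x}=\delta_{\hat x_2,\check x}=0)\sim c\,a^3$ for the star but $\sim c'\,a^4$ for the triangle as $a\to0$). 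Only the law of the minimum $\delta_{\widehat A,\widecheck A}$ is invariant, so no Gaussian integration over $u$ can produce the matching joint law; the obstacle you flag as ``the main obstacle'' is genuinely insurmountable. The paper even rules out the refinement where one allows additive delays $u(\hat x),u(\check x)$ at boundary points, so no local conditioning argument of this type can carry the induction.

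Your second plan contains a false claim: that the joint law of field values and local times along the exploration depends on $\mathcal{G}$ only through $C^{\rm eff}_A$. It does not. At time $t>0$ the SDE of Lemma~\ref{LemSDE} is driven by $C^{\rm eff}_{ij}(r_1(t),\dots,r_n(t))$, the effective conductance matrix of the \emph{grown} boundary $A\cup\{z_i(r_i(t))\}$, and this matrix is a function of the full internal structure of $\mathcal{G}$, not of $C^{\rm eff}_A$ alone; two networks electrically equivalent from $A$ are generally not equivalent from a strictly larger set. The star-vs-triangle counterexample above already shows that the joint law of the explored data is different for the two networks. What \emph{is} invariant is only the one-dimensional law of $\delta_{\widehat A,\widecheck A}$, and extracting exactly that invariant is the whole difficulty. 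The missing ingredient is the explicit exploration martingale $\Psi_\ell(t)$ of \eqref{EqDefPsiell} and Lemma~\ref{LemMartingale}: one verifies by an It\^o computation (using Lemma~\ref{LemDerivPartFunc}) that $\Psi_\ell$ is a bounded martingale, with $\Psi_\ell(0)$ equal to the right side of \eqref{EqExplDist} and $\Psi_\ell(\widehat T)$ equal to the analogous expression \eqref{EqExplDist} for the one-edge-smaller network $\underline{\mathcal{G}}$ at level $\ell-L(\widehat T)$; optional stopping plus an induction on $|E|$ (not on $|V\setminus A|$, and not via a single star-mesh step) then closes the argument. Without identifying $\Psi_\ell$, or some equivalent quantity encoding only the one-dimensional marginal, the reduction step in your proposal does not go through.
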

We can also note that if the sign of $h$ is the same on $\widecheck A$ as on $\widehat A$, then this implies that
\begin{equation}
\label{EqProbConnect}
\mathbb{P}(\delta_{\widehat{A},\widecheck{A}}> 0)=
\prod_{\substack{\hat{x}\in\widehat{A}\\\check{x}\in\widecheck{A}}}
\exp\left(-2C^{\rm eff}_A (\hat{x},\check{x}) h(\hat{x})h(\check{x}) \right).
\end{equation}

Note that in the special case where $\widehat A$ and $\widecheck A$ are both singletons (or equivalently, when $h$ is constant on both $\hat A$ and $\check A$), this proposition is  exactly Proposition~\ref{PropLaw2Points}. Also, when $h$ is the zero function, then $(\delta_{\widehat A, \widecheck A})^2$ is exponentially distributed.

Note also that Lemma \ref {LemLevyBridge} implies the proposition in the special case where the graph consists only of edges that join directly points from $\widecheck A$ to points from $\widehat A$. Indeed, in this case, $\delta_{\widehat{A},\widecheck{A}} >  \ell$ if and only if the length of each of the edges in greater than $\ell$, and we can note that the events corresponding to these edges  are all independent (and their laws are given by Lemma \ref {LemLevyBridge}). Hence, given what we have established so far, proving Proposition \ref {thmDist2Sets} is equivalent to proving that: 

\begin {proposition}
\label {reformulation}
 Suppose that two finite graphs ${\mathcal G}$ and ${\mathcal G}'$ share the same boundary set $A$, that $A$ is partitioned into two non-empty sets $\widehat A$ and $\widecheck A$. 
 Suppose further that for each $\hat x \in \widehat A$ and $\check x \in \widecheck A$, the effective resistance $R^{\rm eff}_A (\hat x, \check x)$ is the same for both graphs. Then, for each boundary condition $h$ with constant signs on each of $\widehat A$ and $\widecheck A$, the law of $\delta_{\widehat A, \widecheck A}$ is the same for both graphs. 
\end {proposition}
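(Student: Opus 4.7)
Since the direct-edge graph case is covered by Lemma \ref{LemLevyBridge} and edge independence, it suffices to show that two graphs $\mathcal{G}$ and $\mathcal{G}'$ with matching cross-resistances produce the same law for $\delta_{\widehat{A},\widecheck{A}}$. My plan reduces any $\mathcal{G}$ to its direct-edge counterpart through a sequence of graph transformations that preserve both the cross-conductances $C^{\rm eff}_A(\hat x,\check x)$ and the law of $\delta_{\widehat{A},\widecheck{A}}$.

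The first transformation is the insertion or removal of an edge both of whose endpoints lie in $\widehat A$ (or both in $\widecheck A$). Adding such an edge $[\hat x_1,\hat x_2]$ does not alter the Green's function at any interior vertex, so the GFF restricted to the original edges has the same law, with an independent Brownian bridge on the new edge. Any path newly using $[\hat x_1,\hat x_2]$ has the form $\hat x_i\to\cdots\to \hat x_1\to\hat x_2\to\cdots\to\check x$, whose local time at zero is at least that of the suffix $\hat x_2\to\cdots\to\check x$---itself a valid path from $\widehat A$ to $\widecheck A$ already in $\mathcal{G}$---so the infimum over paths is unchanged almost surely. The second transformation is the classical star-mesh (Kron) reduction at an internal vertex $v$: replace the star of edges incident to $v$ by the electrically equivalent complete graph on the neighbors $(u_1,\dots,u_k)$, with conductances $c(u_i,v)c(u_j,v)/\sum_{\ell} c(u_\ell,v)$. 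I would argue by induction on the number of internal vertices: condition on the neighbor values $(\tilde\phi(u_i))_{i}$, which by preservation of the effective conductance matrix on $\{u_i\}$ have the same joint Gaussian law in the star and in the mesh, and use spatial Markov to decouple the star/mesh from the rest. It remains to compare, conditionally, the effect of the star versus the mesh on $\delta_{\widehat A,\widecheck A}$; the case $k=2$ reduces to the concatenation-of-bridges fact together with Lemma \ref{LemLevyBridge}, while for $k\geq 3$ one can run the exploration of Lemma \ref{LemExplor} from $\{u_i\}$ into the star (resp.\ mesh), match the resulting SDEs via Lemmas \ref{LemSDE} and \ref{LemDerivPartFunc}, and conclude by a Girsanov argument analogous to the proof of Proposition \ref{thmGeneralisedLevy}. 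Once both transformations are established, iterating them eliminates internal vertices one by one and then trims intra-side edges, ending at the direct-edge graph.

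The principal obstacle is the star-mesh invariance for $k\geq 3$. The authors explicitly note that the joint law of all pairwise pseudo-distances among three or more points is \emph{not} invariant under Y-$\Delta$ transformations, so one cannot directly appeal to a general network-equivalence statement for the full random pseudo-metric. What saves the argument is that $\delta_{\widehat A,\widecheck A}$ is a very specific functional of the pseudo-metric---the minimum of distances with endpoints in the two pre-specified sides---and it is only this functional (not the full random matrix of pseudo-distances) that is invariant. Isolating this weaker invariance rigorously requires setting up the Girsanov computation so that the intra-side effective conductances, which drive the failure of full pseudo-metric invariance, actually cancel out when one computes the minimum-over-cross-pairs functional. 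The natural tool is the exponential martingale $D_t^{-1}$ from the proof of Lemma \ref{LemDerivPartFunc}, whose multiplicative structure over pairs $(i,j)$ mirrors the product over cross-pairs in the target formula \eqref{EqExplDist} and is what should ultimately explain the factorization on the right-hand side.
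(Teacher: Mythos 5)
Your overall reduction (to a chain of star-mesh eliminations of internal vertices, combined with trimming intra-side edges) is the same observation the paper makes just before this proposition, and your treatment of the intra-side edges is correct. The gap is in the star-mesh step, and it is precisely the gap the paper spends a page of discussion warning against: the ``condition on the local interface and compare conditionally'' scheme cannot be made to work, even though the statement to be proved is true.

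Concretely, after conditioning on the neighbour values $(\tilde\phi(u_i))_i$, spatial Markov makes the GFF inside the star (resp.\ mesh) independent of the GFF on the rest of $\widetilde{\mathcal G}$, but $\delta_{\widehat A,\widecheck A}$ is a functional of both parts jointly: the minimising path accumulates local time outside the star before entering it. Peeling this apart reduces your comparison to comparing laws of $u$-delayed functionals $\delta^{u}_{\widehat A,\widecheck A}=\min_{\hat x,\check x}\bigl(\delta_{\hat x,\check x}+u(\hat x)+u(\check x)\bigr)$, with random delays $u$ coming from the already-integrated-out exterior. The paper shows, by the small-$a$ asymptotics ($a^{3}$ versus $a^{4}$) for the three-legged star versus the triangle, that the joint law of $(\delta_{\hat x_1,\check x},\delta_{\hat x_2,\check x})$ is \emph{not} preserved under Y-$\Delta$, and deduces from this that $\delta^{u}$ is not preserved either, so the conditional law of $\delta_{\widehat A,\widecheck A}$ given the exterior data is genuinely different for the two local wirings. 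Your final paragraph acknowledges the obstruction, but the proposed remedy (``run the exploration of Lemma~\ref{LemExplor} from $\{u_i\}$ into the star/mesh, match the SDEs, apply Girsanov'') stays inside the same local, conditioned framework: the exploration SDEs inside a star and inside the equivalent mesh have genuinely different $C^{\rm eff}_{ij}(r_1,\dots,r_n)$ coefficients, and there is no cancellation available at the level of a single local piece. There is nothing here that explains why the miracle should happen only after the exterior randomness is re-integrated.

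The paper's actual proof sidesteps local conditioning altogether. It proves Proposition~\ref{thmDist2Sets} directly, by induction on $|E|$, using a single \emph{global} exploration from the whole of $\widehat A$ that advances so as to keep the accumulated local time at $0$ equal on all exploration fronts (Lemma~\ref{LemExplor}). This choice is exactly what kills the delay terms: at every time the frontier is a surface of constant $\delta$-distance from $\widehat A$, so when the first edge is fully crossed, the remaining problem is again a clean $\delta_{\underline{\widehat A},\widecheck A}$ problem on a graph with one fewer edge, with no $u$'s to carry along. On top of this exploration they build the explicit martingale $\Psi_\ell(t)$ of Lemma~\ref{LemMartingale} (whose product-over-cross-pairs structure is indeed the factorisation you are intuiting at the end), and conclude by optional stopping at the time the first edge is exhausted. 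Your instinct that the multiplicative martingale is the right object is correct; the missing idea is that the martingale must be driven by the constant-local-time-front exploration from $\widehat A$, not by a localised exploration inside a single star.
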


In other words, one can view this  as  an invariance of the distribution of $\delta_{\widehat A, \widecheck A}$ 
under rewiring of the electrical network that preserves the effective conductance matrix $C_A^{\rm eff}$.

Let us also note that  \eqref{EqProbConnect} can also be reformulated as follows: For general boundary conditions $h$ on $A$, if $a < \min_A h$, 
then the event $E_a$ that there exists a continuous path
$\gamma$ connecting  $\widehat{A}$ to $\widecheck{A}$ along which $\tilde \phi$ remains greater than $a$, 
is exactly the event $\{ \delta_{\widehat{A},\widecheck{A}}(\tilde{\phi}-a) = 0 \}$, so that  
\begin{displaymath}
\mathbb{P}(E_a )=
1- \prod_{\substack{\hat{x}\in\widehat{A}\\\check{x}\in\widecheck{A}}}
\exp\left(-2C^{\rm eff}_A (\hat{x},\check{x})(h(\hat{x})-a)(h(\check{x})-a)\right).
\end{displaymath}
This can then be used to establish a lower bound for the probability that 
$\widehat{A}$  and $\widecheck{A}$ are connected by a path on which the height of the discrete Gaussian free field is greater than some value.
For similar lower bounds, using different techniques, see \cite{Sznitman2015PercGFF}, Section 3, and \cite{DingLi2016ChimGFF}, Proposition 2.1.

  \medbreak
 
 Let us first make a few further comments.
 \begin {itemize}
 \item 
In all these statements,  edges of the graph that join two points of $\widehat A$ (or two points of $\widecheck A$) do play no role in the distance $\delta_{\widehat A, \widecheck A}$, and the GFF on those edges is independent from the rest for a given $h$. Hence, we do not need to bother about these edges. 
 \item 
In the special case where the graph is a three-legged ``star-graph'' ${\mathcal G}_3$ with $V= \{ \hat x_1, \hat x_2, y, \check x \}$ and three edges joining respectively $y$ to the three boundary point $\hat x_1$, $\hat x_2$ and $\check x$, then for $\widehat A = \{ \hat x_1,  \hat x_2 \}$, it is easy to check directly that Proposition \ref{thmDist2Sets} holds: Indeed,
$$\delta_{\widehat A, \check x} = \min ( \delta_{\hat x_1, y} , \delta_{\hat x_2, y}) + \delta_{y, \check x},$$ and one can first sample $\tilde \phi (y)$ and because these three random edge-lengths
are then conditionally independent,  one can conclude. 
\item
The previous case of the star-graph ${\mathcal G}_3$ shows that the law of $\delta_{\widehat A, \widecheck A}$ in this case is identical to that when the star-graph is replaced by the electrically equivalent 
triangle graph. 
\begin{figure}[ht!]
  \centering % gives better spacing than \begin{center}...\end{center}
  \includegraphics[width=.5\textwidth]{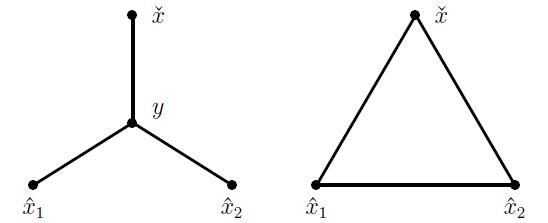}
  \caption{
{Star-triangle} transformation. 
}
  \label{figStarDelta}
\end{figure}
In fact, the previous observation can be immediately generalised to the case of star graphs ${\mathcal G}_N$ with more boundary points (when the graph consists only of edges joining one single central point $y$ to each boundary point in $\widehat A \cup \widecheck A$ but this time, the number $N$ of sites of $A$ can be greater than $3$). In that case, replacing this graph by the complete graph on $A$ with the same effective resistances will also not change the law of $\delta_{\widehat A, \widecheck A}$.
\end {itemize} 

Recall that the electrical network defined by ${\mathcal G}$ can always be ``reduced'' to the electrically equivalent network (viewed from $A$) consisting only of edges joining directly the edges of $A$, by a finite iteration of local ``star ${\mathcal G}_N$ to complete graph'' transformations (at each step, one chooses a site that is not in $A$ and removes it using this transformation). Proving Proposition \ref {reformulation} is therefore equivalent to showing that the law of $\delta_{\widehat A, \widecheck A}$ remains 
unchanged under each of such local transformation. One may wonder if one simple direct proof of this fact might just be to see that if one conditions 
on the values of the GFF outside of the considered star (which is the same before and after the local transformation), then the conditional law of $\delta_{\widehat A, \widecheck A}$ remains unchanged under this local transformation. In the coming few paragraphs, we now explain that this is {\em not} the case -- this will also provide some 
justification for the actual strategy of our proof. 

Let us first consider the case where the graph ${\mathcal G}$ is just a three-legged star with $A = \{ \hat x_1, \hat x_2, \check x\}$ and the inner point $y$ as before. This 
set is electrically equivalent to a triangle graph ${\mathcal G'}$ consisting only of three edges joining the points of $A$. It is easy to see that it is {\em not true} that for any non-negative boundary condition $h$, the joint law of the couple $(\delta_{\hat x_1, \check x}, \delta_{\hat x_2, \check x})$ is the same for both graphs (recall that the proposition only states than the minimum of these two quantities have the same law for both graphs). 
Indeed, one can for instance consider the case where $h$ is constant and equal to $a$ on $A$, and look at the asymptotic behaviour when $a \to 0$ of the probability 
that $(\delta_{\hat x_1, \check x}, \delta_{\hat x_2, \check x}) = (0, 0 )$ on each of the graphs. One can directly compute and compare these probabilities, but one can also 
proceed heuristically as follows:
For $(\delta_{\hat x_1, \check x}, \delta_{\hat x_2, \check x}) = (0, 0 )$ to hold on the triangle graph, we see that the GFF has to be positive on each of the three edges, and 
it is easy to check that when $a \to 0$, this occurs with a probability equivalent to a constant times $a^3$ (basically, each of the GFF's is starting from $a$ near the three 
boundary points and has to avoid the origin). On the other hand, one the triangle graph, $(\delta_{\hat x_1, \check x}, \delta_{\hat x_2, \check x}) = (0, 0 )$ if and only 
the GFF is positive on two of the three edges, which occurs with a probability equivalent to a constant times $a^4$ (this time, for each of these two edges, the GFF has to remain positive near both of the endpoints of the edge). 
Hence, we see that Proposition \ref {thmDist2Sets} can not be generalised to statements that involve the joint law of more than 
two more than one  distance between  parts of the boundary.

One could nevertheless wonder (and this would indicate that the proof of Proposition \ref {thmDist2Sets} via conditioning and local rewirings 
could work) whether the following more general general result could hold: Suppose that we are now given a non-negative function $u$ on $A$, and we define 
$$\delta^u_{\widehat A, \widecheck A} := \min_{\hat x \in \widehat A, \check x \in \widecheck A}  ( \delta_{\hat x, \check x} + u(\hat x) + u(\check x)).$$
Then, is it the case that for each given $u$ and $h$, the law of this quantity is invariant under the rewiring procedure described in Proposition \ref{reformulation}? 
Again, as we shall now explain, the answer to this question is negative already in the case of the three-legged star graph:
Suppose that for the three-legged graph ${\mathcal G}_{3}$ and its electrical equivalent triangle graph 
${\mathcal G}'_{3}$, with $\widehat A = \{\hat  x_1, \hat x_2 \}$ and each given $h$ and $u$, 
the laws of $\delta^u_{\widehat A, \widecheck A}$ are the same for both graphs. By definition, this would imply that for each given $h$, and each $\ell$, $u_1$ and $u_2$, 
the probabilities of the event $\{ \delta_{\hat x_1, \check x} > \ell - u_1, \delta_{\hat x_2, \check x} > \ell - u_2 \}$ are the same for both graphs. This would imply that 
the joint law of the couple $(\delta_{\hat x_1, \check x} , \delta_{\hat x_2, \check x} )$ are the same for both graphs, but we have just argued that this is not the case.

Consider now the three-legged graph ${\mathcal G}_3$, and divide each of the two edges $(\hat x_1, y)$ and $(\hat x_2, y)$ into two pieces (by introducing additional vertices $x_1$ and $x_2$). 
Then, what we have just explained is that for this new graph (with six vertices), the conditional distribution of $\delta_{ \widehat A, \widecheck A}$ given what happens 
on the edges $(\hat x_1, x_1)$ and $(\hat x_2, x_2)$ is {\em not} invariant under the transformation that turns the star $(x_1, x_2, y, \check y)$ into a triangle
(in that setting, the function $u$ can be understood in terms of the distances $\delta_{\hat x_1, x_1}$ and $\delta_{\hat x_2, x_2}$ that have already been discovered).

This  does therefore  show that the previously mentioned possible proof of Proposition \ref {reformulation} via conditioning and local modifications can not work. It also 
suggests that a good exploration procedure to use in order to establish Proposition \ref {thmDist2Sets} is to start from the boundary and to discover simultaneously all 
points that are at the same distance from the boundary. Indeed, in the remaining-to-be-discovered domain, no non-constant ``delay-type terms'' like $u$ will appear.

\subsection {Proof of Proposition \ref {thmDist2Sets}}
\label{SubSevIdMetricGraph}
We will use the similar exploration martingale approach as for Proposition \ref {thmGeneralisedLevy}.
As in Section \ref{SecLevy}, we will consider a particular exploration 
$\hat{r}_{i}(t)\in [0,R(\hat{e}_{i})]$ of the metric
graph GFF $\tilde{\phi}$, such that for all
$t\geq 0$,
\begin{displaymath}
L_{1}(\hat{r}_{1}(t))=\dots =L_{\hat{n}}(\hat{r}_{\hat{n}}(t)),
\end{displaymath}
where $L_{i}(\hat{r}_{i}(t))$ is the local time at zero accumulated on the line
$[\hat{x}_{i},\hat{z}_{i}(\hat{r}_{i}(t))]$. 
We will denote by $L(t)$ the common value of $L_{i}(\hat{r}_{i}(t))$.
As before, the exploration will  eventually stop evolving at time 
$\widehat{T}$ when one of the $\hat{r}_{i}(t)$ hits
$R(\hat{e}_{i})$ (a.s., the corresponding value of $i$ is unique, see Lemma \ref{LemExplor}). In particular, if for all
$i,j\in \lbrace 1,\dots,\hat{n}\rbrace$, $h(\hat{x}_{i})h(\hat{x}_{j})\geq 0$, then
for all $i,j\in \lbrace 1,\dots,\hat{n}\rbrace$ and all $t\geq 0$, 
\begin{displaymath}
\tilde{\phi}_{\hat{z}_{i}(\hat{r}_{i}(t))}
\tilde{\phi}_{\hat{z}_{j}(\hat{r}_{j}(t))}\geq 0.
\end{displaymath}

Consider a time $t\in (0, \widehat{T})$. If we replace the circuit outside edges that join two vertices in $\widehat{A}$ or two vertices in $\widecheck{A}$, and outside the lines 
$[\hat{x}_{i},\hat{z}_{i}(\hat{r}_{i}(t))]$, $1\leq i\leq \hat{n}$, by a complete graph
joining $\lbrace \hat{z}_{1}(\hat{r}_{1}(t)), \ldots , \hat{z}_{n}(\hat{r}_{n}(t))\rbrace\cup\widecheck{A}$, and have an electrically equivalent circuit, then we will denote the obtained conductances by
\begin{itemize}
\item $C^{\rm eff}_{i j}(t)$ for the conductance of the edge between $\hat{z}_{i}(\hat{r}_{i}(t))$ and
$\hat{z}_{j}(\hat{r}_{i}(t))$,
\item $C^{\rm eff}_{i\check{x}}(t)$
for the conductance of the edge between $\hat{z}_{i}(\hat{r}_{i}(t))$ and $\check{x}\in \widecheck{A}$,
\item $C^{\rm eff}_{\check{x}\check{x}'}(t)$ for the conductance of the edge between
$\check{x}\neq\check{x}'\in \widecheck{A}$.
\end{itemize}
%All the above conductances are non-decreasing functions in $t$. \textbf{WHY?}
All these conductances have limits at $t=0$ and
\begin{displaymath}
\lim_{t\rightarrow 0}C^{\rm eff}_{ij}(t)=
C^{\rm eff}(\hat{x}_{i},\hat{x}_{j})~\text{if}~\hat{x}_{i}\neq\hat{x}_{j},
\qquad
\sum_{\substack{1\leq i\leq\hat{n}\\\hat{x}_{i}=\hat{x}}}
\lim_{t\rightarrow 0}C^{\rm eff}_{i\check{x}}(t)=C^{\rm eff}(\hat{x},\check{x}),
\end{displaymath}
\begin{displaymath}
\lim_{t\rightarrow 0}C^{\rm eff}_{\check{x}\check{x}'}(t)=C^{\rm eff}(\check{x},\check{x}').
\end{displaymath}
If $i_{0}$ is the value such that $\hat{r}_{i_{0}}(\widehat{T})=R(\hat{e}_{i_{0}})$ and if
$\hat{y}_{i_{0}}\in \widecheck{A}$, then
\begin{displaymath}
\lim_{t\rightarrow \widehat{T}} C^{\rm eff}_{i_{0}\hat{y}_{i_{0}}}(t) = +\infty.
\end{displaymath}
In all other cases the limits of effective conductances at $\widehat{T}$ are finite. 

Let $\ell > 0$. Let $\tau_{\ell}\in [0,+\infty]$ be the first time 
$L(t)$ hits the level $\ell$.
Consider the process $\Psi_{\ell}(t)$, defined for
$t\in [0, \widehat{T})$ as
\begin{eqnarray}
\label{EqDefPsiell}
  \Psi_{\ell}(t) &:= &
\prod_{\substack{ 1\leq i\leq\hat{n}\\\check{x}\in\widecheck{A}}}
\exp\bigg(
-\frac{1}{2}C^{\rm eff}_{i\check{x}}(t\wedge \tau_{\ell})
\big(\vert\tilde{\phi}_{\hat{z}_{i}(\hat{r}_{i}(t\wedge \tau_{\ell}))}\vert +
\vert h(\check{x})\vert+
\ell-L(t\wedge\tau_{\ell})\big)^{2} 
\\
\nonumber
&& \null  \quad +\frac{1}{2}C^{\rm eff}_{i\check{x}}(t\wedge \tau_{\ell})
\big(\tilde{\phi}_{\hat{z}_{i}(\hat{r}_{i}(t\wedge \tau_{\ell}))}-h(\check{x})
\big)^{2}\bigg)
\end{eqnarray}
We know that almost surely, $\tau_{\ell}\neq \widehat{T}$. If $\tau_{\ell}< \widehat{T}$, then for all
$i\in {1,\dots,\hat{n}}$, $\hat{z}_{i}(\hat{r}_{i}(\tau_{\ell}))=0$
and $\Psi_{\ell}(\tau_{\ell})=1$. Then we can extend $\Psi_{\ell}(t)$ for $t\geq \tau_{\ell}$, in particular for $t\geq \widehat{T}$, to be equal to $1$.
If $\tau_{\ell} > \widehat{T}$ and none of the $\hat{z}_{i}(\hat{r}_{i}(\widehat{T})$
hits $\widecheck{A}$, then $\Psi_{\ell}(t)$ has a finite limit 
in $(0,1)$ at $\widehat{T}$, and we extend $\Psi_{\ell}(t)$ for $t\geq \widehat{T}$ to be equal this limit.
If $\tau_{\ell} > \widehat{T}$ and for the value $i_{0}$, such that
$\hat{z}_{i_{0}}(\hat{r}_{i_{0}}(\widehat{T})=\hat{y}_{i_{0}}$, we have
$\hat{y}_{i_{0}}\in \widecheck{A}$, then the factor
\begin{displaymath}
\exp\bigg(
-\frac{1}{2}C^{\rm eff}_{i_{0}\hat{y}_{i_{0}}}(t\wedge \tau_{\ell})
\big(\vert\tilde{\phi}_{\hat{z}_{i_{0}}(\hat{r}_{i_{0}}(t\wedge \tau_{\ell}))}\vert +
\vert h(\hat{y}_{i_{0}})\vert+
\ell-L(t\wedge\tau_{\ell})\big)^{2}
+\frac{1}{2}C^{\rm eff}_{i_{0}\hat{y}_{i_{0}}}(t\wedge \tau_{\ell})
\big(\tilde{\phi}_{\hat{z}_{i_{0}}(\hat{r}_{i_{0}}(t\wedge \tau_{\ell}))}-h(\hat{y}_{i_{0}})
\big)^{2}\bigg),
\end{displaymath}
appearing in the product \eqref{EqDefPsiell}, tends to $0$ at 
$\widehat{T}$. Indeed, $C^{\rm eff}_{i\hat{y}_{i_{0}}}(t\wedge \tau_{\ell})$ tends to infinity and
\begin{displaymath}
\big(\vert h(\hat{y}_{i_{0}})\vert+\ell-L(\widehat{T})\big)^{2}>
h(\hat{y}_{i_{0}})^{2}.
\end{displaymath}
Thus, $\Psi_{\ell}(t)$ has limit $0$ at $\widehat{T}$. In this case we will extend 
$\Psi_{\ell}(t)$ to be equal $0$ for $t\geq \widehat{T}$.
Extended this way, $(\Psi_{\ell}(t))_{t\geq 0}$ is a continuous process on
$[0,+\infty)$, stopped at time $\tau_{\ell}\wedge\widehat{T}$. The initial value of the process is
\begin{displaymath}
\Psi_{\ell}(0)=
\prod_{\substack{ 1\leq i\leq\hat{n}\\\check{x}\in\widecheck{A}}}
\exp\bigg(
-\frac{1}{2}C^{\rm eff}_A(\hat{x}_{i},\check{x})
\big(\vert h(\hat{x}_{i})\vert +
\vert h(\check{x})\vert+\ell\big)^{2}
+\frac{1}{2}C^{\rm eff}_A (\hat{x}_{i},\check{x})
\big(h(\hat{x}_{i})-h(\check{x})\big)^{2}\bigg).
\end{displaymath}

\begin{lemma}
\label{LemMartingale}
Let $(\widehat{\mathcal{F}}_{t})_{t\geq 0}$ be the natural filtration of
$(\tilde{\phi}_{\hat{z}_{i}(\hat{r}_{i}(t))},\hat{r}_{i}(t))_{1\leq i\leq \hat{n}, t\geq 0}$. Let
$\ell > 0$. Assume that for all
$i,j\in \lbrace 1,\dots,\hat{n}\rbrace$, $h(\hat{x}_{i})h(\hat{x}_{j})\geq 0$,
and that for all $\check{x},\check{x}'\in \widecheck{A}$,
$h(\check{x})h(\check{x}')\geq 0$. Then the process $(\Psi_{\ell}(t))_{t\geq 0}$
defined previously (see \eqref{EqDefPsiell}) is a martingale with respect the filtration 
$(\widehat{\mathcal{F}}_{t})_{t\geq 0}$. In particular
\begin{eqnarray*}
 \Psi_{\ell}(0)&=&
\prod_{\substack{ 1\leq i\leq\hat{n}\\\check{x}\in\widecheck{A}}}
\exp\bigg(
-\frac{1}{2}C^{\rm eff}_A (\hat{x}_{i},\check{x})
\big(\vert h(\hat{x}_{i})\vert +
\vert h(\check{x})\vert+\ell\big)^{2}
+\frac{1}{2}C^{\rm eff}_A(\hat{x}_{i},\check{x})
\big(h(\hat{x}_{i})-h(\check{x})\big)^{2}\bigg)
\\
&=&\mathbb{P}(\tau_{\ell}< \widehat{T})+
\mathbb{E}[1_{\tau_{\ell}> \widehat{T}}\Psi_{\ell}(\widehat{T})].
\end{eqnarray*}
\end{lemma}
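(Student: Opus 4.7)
Without loss of generality assume $h\geq 0$ on $\widehat{A}$, and write $\eta\in\{\pm 1\}$ for the common sign of $h$ on $\widecheck{A}$, so that $|h(\check x)| = \eta h(\check x)$. The key structural input is that $|\Phi_i|$ is a semimartingale with Tanaka decomposition $d|\Phi_i| = \mathrm{sgn}(\Phi_i)\,d\Phi_i + dL^0_{\Phi_i}$, and that by the time-change formula applied to $\Phi_i(t) = \tilde\phi_{\hat z_i(\hat r_i(t))}$ together with the synchronization $L_i(\hat r_i(t)) = L(t)$ built into the exploration of Lemma~\ref{LemExplor}, we have $L^0_{\Phi_i}(t) = L(t)$ for every $i$, so $dL^0_{\Phi_i} = dL$ uniformly in $i$. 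Expanding
\begin{displaymath}
(|\Phi_i| + |h(\check x)| + \ell - L)^2 - (\Phi_i - h(\check x))^2 = 2|\Phi_i|(|h(\check x)| + \ell - L) + 2\Phi_i h(\check x) + (|h(\check x)|+\ell - L)^2 - h(\check x)^2,
\end{displaymath}
we can rewrite $\log\Psi_\ell = -\sum_i \tilde\beta_i |\Phi_i| - \sum_i \tilde h_i \Phi_i - \tfrac12 \tilde\gamma$, with $\tilde\beta_i := \sum_{\check x} C^{\rm eff}_{i\check x}(|h(\check x)|+\ell-L)$, $\tilde h_i := \sum_{\check x}C^{\rm eff}_{i\check x}h(\check x)$, $\tilde\gamma := \sum_{i,\check x}C^{\rm eff}_{i\check x}[(|h(\check x)|+\ell-L)^2 - h(\check x)^2]$, each a finite-variation process.

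The plan is now to apply It\^o's formula using the natural extension of Lemma~\ref{LemSDE} to the boundary $A = \widehat A\cup\widecheck A$, namely $d\Phi_i = dM_i + \bigl[\sum_{j\neq i}C^{\rm eff}_{ij}(\Phi_j - \Phi_i) + \sum_{\check x}C^{\rm eff}_{i\check x}(h(\check x) - \Phi_i)\bigr]d\hat r_i$ with $d\langle M_i, M_j\rangle = \delta_{ij}d\hat r_i$, together with the corresponding extension of Lemma~\ref{LemDerivPartFunc} giving the partial derivatives of $C^{\rm eff}_{\bullet\bullet}$ in each $\hat r_k$. The martingale part of $d\Psi_\ell/\Psi_\ell$ comes out as $-\sum_i[\tilde\beta_i\mathrm{sgn}(\Phi_i) + \tilde h_i]\,dM_i$, and one only needs to verify that the drift vanishes; I will split it according to the mutually singular measures $dL$ and $d\hat r_k$.

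For the $dL$ drift, three sources appear: $-\sum_i \tilde\beta_i\,dL^0_{\Phi_i} = -\sum_i \tilde\beta_i\,dL$ (from Tanaka applied to $d|\Phi_i|$), $+\sum_i|\Phi_i|R_i\,dL$ from $-\sum_i|\Phi_i|\,d\tilde\beta_i$ (using $R_i := \sum_{\check x}C^{\rm eff}_{i\check x}$ and that $d\tilde\beta_i$ has $dL$-component $-R_i\,dL$), and $+\sum_i\tilde\beta_i\,dL$ from $-\tfrac12\,d\tilde\gamma$ (since $\tilde\gamma$ has $dL$-component $-2\sum_i\tilde\beta_i\,dL$). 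The first and third cancel, leaving $\sum_i|\Phi_i|R_i\,dL$, and this vanishes on the support of $dL$: indeed $dL = 0$ during the initial phase $t\leq \sum_i\rho_i^0$, and afterwards Lemma~\ref{LemExplor} forces $|\Phi_i|=0$ for all $i$ at the times where $dL$ charges. The $d\hat r_k$ drift, after collecting the drift of each $\Phi_i$, the $\hat r_k$-variations of $\tilde\beta_i$, $\tilde h_i$, $\tilde\gamma$, and the It\^o correction $\tfrac12\sum_i[\tilde\beta_i\mathrm{sgn}(\Phi_i)+\tilde h_i]^2d\hat r_i$, reorganizes into a polynomial identity in $(\Phi_j, h(\check x), \ell - L)$; using Lemma~\ref{LemDerivPartFunc} and the elementary identity $(|h|+\ell-L) + h = \mu_{\check x}$ when $\eta=1$ (similarly for $\eta=-1$), one checks term by term that it reduces to zero, which is the multi-index lift of the single-edge cancellation underlying Lemma~\ref{LemLevyBridge}.

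The main obstacle is the correct handling of $|\Phi_i|$ as a (non-smooth) semimartingale: if one naively replaces $|\Phi_i|$ by $\Phi_i$ (which is \emph{not} legitimate since the $\Phi_i$'s need not be non-negative) one obtains a spurious residual $dL$-drift, and it is only the identity $dL^0_{\Phi_i} = dL$, arising from the synchronization of local times, that produces the exact cancellation above. Once the martingale property is in hand, the stated identity follows from $\mathbb{E}[\Psi_\ell(\infty)] = \Psi_\ell(0)$, using that $\Psi_\ell$ is bounded in $[0,1]$ and stopped at $\tau_\ell\wedge \widehat T$ with $\Psi_\ell(\tau_\ell)=1$ on $\{\tau_\ell<\widehat T\}$, together with the direct evaluation $\Psi_\ell(0) = \prod_{i,\check x}\exp(\cdots)$ using $\Phi_i(0) = h(\hat x_i)$ and $C^{\rm eff}_{i\check x}(0) = C^{\rm eff}_A(\hat x_i, \check x)$.
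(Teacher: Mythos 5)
Your proof is correct and uses the same key ingredients as the paper's appendix computation: Itô's formula applied to $\Psi_{\ell}$, Tanaka's formula for $\vert\Phi_{i}\vert$, the synchronization $L^{0}_{\Phi_{i}}(t) = L(t)$ across all $i$ (this is the crux, as you rightly emphasize), the semimartingale decomposition of Lemma~\ref{LemSDE}, and the effective-conductance derivative formulas of Lemma~\ref{LemDerivPartFunc}. The only difference is organizational: you expand $(S^{+}_{i\check x})^{2} - S_{i\check x}^{2}$ into linear-in-$\vert\Phi_{i}\vert$, linear-in-$\Phi_{i}$, and constant pieces and then split the drift by the mutually singular measures $dL$ and $d\hat r_{k}$, handling a residual $\sum_{i}\vert\Phi_{i}\vert R_{i}\,dL$ term by observing that $\Phi_{i}$ vanishes on the support of $dL$; the paper instead keeps the square intact and notes directly that $dS^{+}_{i\check x} = \sigma(t)\,d\tilde\phi^{*}_{i}(t)$ (the local time terms cancel inside this differential by Tanaka plus synchronization), whence $d\bigl((S^{+}_{i\check x})^{2}-S_{i\check x}^{2}\bigr) = 2\theta_{\check x}(t)\,d\tilde\phi^{*}_{i}(t)$ with $\theta_{\check x}$ independent of $i$, and no residual $dL$ term ever appears. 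Both routes then reduce the $d\hat r_{k}$ drift to an algebraic identity closed by Lemma~\ref{LemDerivPartFunc}; you gesture at this ``term by term'' without carrying it out, which matches the paper's own deferral to the appendix but is the place where a referee would ask you to show your work.
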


\begin{proof}
Since the process $(\Psi_{\ell}(t))_{t\geq 0}$ is bounded, with values in
$[0,1]$, to show that it is a martingale, we need only to show that it is a local martingale.
Note that the process is stopped at $\tau_{\ell}\wedge\widehat{T}$, which is a stopping time for the filtration $(\widehat{\mathcal{F}}_{t})_{t\geq 0}$. To show the local martingale property, we apply Itô's formula and check that
${d\Psi_{\ell}(t)}$ has no bounded variation term and only a local martingale term. The computations are straightforward but quite long; we reproduce them in the appendix.
\end{proof}

We are now ready to prove Proposition \ref {thmDist2Sets}:

\begin{proof}
We will prove the proposition by induction on the number of edges $\vert E\vert$ in the graph.
As we have already pointed out, the result in the case where $|E|=1$ follows from 
Lemma \ref {LemLevyBridge}.

Consider now that $\vert E\vert\geq 2$. As before, we can assume that there is no edge $e$ with
both endpoints in $\widehat{A}$ (as we could have removed it). 
We will reuse the notation of the beginning of Section \ref{SecExplicit} and of Lemma
\ref{LemMartingale}. We will use an exploration $(\hat{r}_{i}(t))_{1\leq i\leq \hat{n},t\geq 0}$
of $\tilde{\phi}$ such that
\begin{displaymath}
L_{1}(\hat{r}_{1}(t))=\dots =L_{\hat{n}}(\hat{r}_{\hat{n}}(t)):=L(t).
\end{displaymath}
$\widehat{T}$ is the first time one of the $\hat{r}_{i}(t)$ hits $R(\hat{e}_{i})$.
Denote $\widehat{\mathcal{F}}_{\widehat{T}}$ the sigma-algebra of
\begin{displaymath}
(\tilde{\phi}_{\hat{z}_{i}(\hat{r}_{i}(t))},\hat{r}_{i}(t))
_{1\leq i\leq\hat{n}, 0\leq t\leq \widehat{T}}.
\end{displaymath}
Let $\ell>0$, and $\tau_{\ell}$ the first time $L(t)$ hits $\ell$.
We will consider the martingale $(\Psi_{\ell}(t))_{t\geq 0}$ that appears in Lemma
\ref{LemMartingale}, according to which
\begin{equation}
\label{EqPsiST}
\Psi_{\ell}(0)=\mathbb{P}(\tau_{\ell}< \widehat{T})+
\mathbb{E}[1_{\tau_{\ell}> \widehat{T}}\Psi_{\ell}(\widehat{T})].
\end{equation}

Let $i_{0}$ be the a.s. unique value of $i$ such that
$\hat{r}_{i}(\widehat{T})=R(\hat{e}_{i})$.
Let $\underline{\mathcal{G}}=(\underline{V},\underline{E})$ be the following network:
\begin{itemize}
\item The set of vertices is $\underline{V}= (V\setminus\widehat{A})
\cup\lbrace \hat{z}_{i}(\hat{r}_{i}(\widehat{T}))\vert 1\leq i\leq \hat{n}, i\neq i_{0}\rbrace$.
\item The set of edges $\underline{E}$ is obtained by removing  the edges $(\hat{e}_{i})_{1\leq i\leq\hat{n}}$ from $E$ and then adding the
edges $(\lbrace\hat{z}_{i}(\hat{r}_{i}(\widehat{T})),\hat{y}_{i}\rbrace)_{i\neq i_{0}}$ joining
$\hat{z}_{i}(\hat{r}_{i}(\widehat{T}))$ to $\hat{y}_{i}$.
\item The conductances of edges in $\underline{E}$ that were already present in $E$ are unchanged. The conductance of an edge $\lbrace\hat{z}_{i}(\hat{r}_{i}(\widehat{T})),\hat{y}_{i}\rbrace$ is
$(R(\hat{e}_{i})-\hat{r}_{i}(\widehat{T}))^{-1}$.
\end{itemize}
The network $\underline{\mathcal{G}}$ has one less edge than $\mathcal{G}$. Denote
$\underline{\widehat{A}}$ the set 
$\lbrace\hat{z}_{i}(\hat{r}_{i}(\widehat{T}))\vert 1\leq i\leq \hat{n}\rbrace$.
$\underline{\tilde{\phi}}$ will denote the process defined on the metric graph
$\underline{\widetilde{\mathcal{G}}}$ associated to $\underline{\mathcal{G}}$ and distributed
like a metric graph GFF with boundary condition $h$ on $\widecheck{A}$ and
condition $\tilde{\phi}_{\hat{z}_{i}(\hat{r}_{i}(\widehat{T}))}$ on
$\hat{z}_{i}(\hat{r}_{i}(\widehat{T}))$, $1\leq i\leq\hat{n}$. This boundary condition has constant sign on
$\underline{\widehat{A}}$.

The event $\tau_{\ell}< \widehat{T}$ is contained inside the event 
$\delta_{\widehat{A},\widecheck{A}}(\tilde{\phi})\geq \ell$. On the event
$\tau_{\ell}>\widehat{T}, y_{i_{0}}\in\widecheck{A}$, $\Psi_{\ell}(\widehat{T})$ equals $0$ and
$\delta_{\widehat{A},\widecheck{A}}(\tilde{\phi})<\ell$.
On the event $\tau_{\ell}>\widehat{T}, y_{i_{0}}\notin\widecheck{A}$, the value of 
$\Psi_{\ell}(\widehat{T})$ is the expression \eqref{EqExplDist} applied to the network
$\underline{\mathcal{G}}$ and the distance $\ell-L(\widehat{T})$. Applying the induction
hypothesis, we get that
\begin{displaymath}
%1_{\tau_{\ell}>\widehat{T}, y_{i_{0}}\notin\widecheck{A}}
\Psi_{\ell}(\widehat{T})
=\mathbb{P}\Big(\delta_{\underline{\widehat{A}},\widecheck{A}}(\underline{\tilde{\phi}})
\geq \ell-L(\widehat{T})\Big\vert\widehat{\mathcal{F}}_{\widehat{T}}\Big).
\end{displaymath}
Since by construction 
$\delta_{\widehat{A},\widecheck{A}}(\tilde{\phi})=
L(\widehat{T})+
\delta_{\underline{\widehat{A}},\widecheck{A}}(\underline{\tilde{\phi}})$, 
applying \eqref{EqPsiST} we get that
\begin{displaymath}
\Psi_{\ell}(0)=\mathbb{P}(\delta_{\widehat{A},\widecheck{A}}(\tilde{\phi})\geq \ell),
\end{displaymath}
which concludes the proof of the proposition.
\end{proof}

\subsection {Distributions related to pseudo-metric balls and variants}

Next we consider the first {passage} set 
$\widetilde{\Lambda}_{a}$ which is the set of points $x$ in the metric graph, for which there exists a continuous path joining $x$ to $\widehat{A}$ such that
$\tilde{\phi}(z)\geq a$ along the path. In the case where $\widehat A$ is just one point, one can view this set as a pseudo-metric ball. 

In the same framework as before, let us first assume that the boundary set $A\subseteq V$ contains at least two vertices and we consider a partition
of $A$ in two non-empty subsets $\widehat{A},\widecheck{A}$.

Let be $a\in\mathbb{R}$, $a<\min_{\widehat{A}} h$. Let $C^{\rm eff}(\Lambda_{a}, \widecheck{A})$ be the effective conductance between $\widetilde{\Lambda}_{a}$ and $\widecheck{A}$ 
(finite if and only if $\widetilde{\Lambda}_{a}\cap \widecheck{A}=\emptyset$)
and let $R^{\rm eff}(\widetilde{\Lambda}_{a}, \widecheck{A})$ be its inverse, the effective resistance.

We first assume that $h$ is constant on $\widecheck{A}$. Note that this is equivalent to the fact that $\widecheck A$ is a singleton (when one identifies all points of $\widecheck A$).  Let
\begin{displaymath}
m:=C^{\rm eff}(\widehat{A},\widecheck{A})^{-1}
\sum_{\substack{\hat{x}\in\widehat{A}\\\check{x}\in\widecheck{A}}}C^{\rm eff}_A(\hat{x},\check{x})
h(\hat{x}).
\end{displaymath}

\begin{proposition}
\label{ThmReffLocSet}
The Laplace transform of the random variable $C^{\rm eff} (\widetilde \Lambda_a, \widecheck A)$ in $[0,\infty]$ is given by  (for $u >0$),
$$\mathbb{E}\left[e^{-u C^{\rm eff}(\widetilde{\Lambda}_{a}, \widecheck{A})}\right]=
\exp\left(-\frac{1}{2}C^{\rm eff}(\widehat{A},\widecheck{A})
\left[ \left(m-a+\sqrt{(h ( \widecheck A)-a)^{2}+2u}\right)^{2}
 - (m-h ( \widecheck A))^{2} \right]\right).
$$
\end{proposition}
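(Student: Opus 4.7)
The plan is to prove Proposition~\ref{ThmReffLocSet} by induction on the number of edges, combined with the exploration-martingale technique developed in Section~\ref{S2} and used in the proof of Proposition~\ref{thmDist2Sets}. As a first reduction, since $h$ is constant on $\widecheck{A}$, I would identify the points of $\widecheck{A}$ and assume $\widecheck{A} = \{\check{x}\}$ throughout. The base case is a single edge from $\widehat{A} = \{\hat{x}\}$ to $\check{x}$, where $\tilde{\phi}$ is a Brownian bridge, $C^{\rm eff}(\widetilde{\Lambda}_a, \widecheck{A}) = 1/(R(e) - \rho^{a})$ with $\rho^a$ the first hit of level $a$ (and $=\infty$ on the event $\rho^a = R(e)$). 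The Laplace transform can then be computed directly from the explicit density of $\rho^a$ for a Brownian bridge: after a change of variables the integral reduces to a Gaussian hitting-time identity and matches the claimed formula.

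For the inductive step, I would use the exploration $(\hat{r}_i(t))_{1 \le i \le \hat n}$ of the edges adjacent to $\widehat{A}$ with the equal-local-time-at-$a$ rule (a trivial shift of the one from Lemma~\ref{LemExplor}), and set $\beta := \sqrt{(h(\check{x}) - a)^2 + 2u}$ and $\alpha := h(\check{x}) - a + \beta$. The candidate (local) martingale is
\begin{displaymath}
\Phi_u(t) \;:=\; \prod_{i=1}^{\hat n} \exp\!\left(-\tfrac{1}{2}\,C^{\rm eff}_{i\check{x}}(t)\big[(\tilde{\phi}_i(t) - a + \beta)^{2} - (\tilde{\phi}_i(t) - h(\check{x}))^{2}\big]\right),
\end{displaymath}
which collapses to $\exp\!\big(-\tfrac{\alpha}{2}\sum_i C^{\rm eff}_{i\check{x}}(t)(2\tilde{\phi}_i(t) - a - h(\check{x}) + \beta)\big)$ because the bracket is linear in $\tilde{\phi}_i$. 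A direct computation using the SDE of Lemma~\ref{LemSDE} together with Lemma~\ref{LemDerivPartFunc} then shows that $\Phi_u$ is a local $(\widehat{\mathcal{F}}_t)$-martingale: the drift from $d\tilde{\phi}_i$ in the tip-to-tip terms $C^{\rm eff}_{ij}(\tilde{\phi}_j - \tilde{\phi}_i)\,d\hat{r}_i$ cancels exactly with the drift produced by $dC^{\rm eff}_{i\check{x}}$ because the algebraic identity $S_i - S_j + 2(\tilde{\phi}_j - \tilde{\phi}_i) = 0$ (with $S_i := 2\tilde{\phi}_i - a - h(\check{x}) + \beta$) makes the tip-to-tip sums collapse; the remaining $C^{\rm eff}_{i\check{x}}(h(\check{x}) - \tilde{\phi}_i)$ term combines with the $(\partial_{r_i} C^{\rm eff}_{i\check{x}})$ piece to leave $\alpha\, C^{\rm eff}_{i\check{x}}(t)^2\,d\hat{r}_i$ from the first order, which is precisely killed by the Itô second-order correction $\tfrac12 \alpha^2 \cdot 4 C^{\rm eff}_{i\check{x}}(t)^2\,d\hat{r}_i$ appearing when one writes $d\Phi_u/\Phi_u$.

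With the martingale in hand, I would apply optional stopping at $\widehat{T}$ (the first time some $\hat{r}_i(t)$ hits $R(\hat{e}_i)$), after a standard localization (for instance by capping the values of the $\tilde{\phi}_i(t)$). The initial value $\Phi_u(0)$ is exactly the RHS of the proposition, as one checks by expanding $(m - a + \beta)^2 - (m - h(\check{x}))^2 = \alpha\,(2m - a - h(\check{x}) + \beta)$ and using $mc = \sum_i C^{\rm eff}_A(\hat{x}_i, \check{x}) h(\hat{x}_i)$ with $c = C^{\rm eff}(\widehat{A}, \widecheck{A})$. At $\widehat{T}$, the same algebra shows that $\Phi_u(\widehat{T})$ is the RHS of Proposition~\ref{ThmReffLocSet} applied to the reduced network $\underline{\mathcal{G}}$ of the proof of Proposition~\ref{thmDist2Sets} (one edge fewer), with the new $\widehat{A}$ the set of tips and new boundary values $\tilde{\phi}_i(\widehat{T})$, so the induction hypothesis identifies $\mathbb{E}[\Phi_u(\widehat{T}) \mid \widehat{\mathcal{F}}_{\widehat{T}}] = \mathbb{E}[e^{-uC^{\rm eff}(\underline{\widetilde{\Lambda}}_a, \widecheck{A})} \mid \widehat{\mathcal{F}}_{\widehat{T}}]$. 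The last step is the pathwise identification $C^{\rm eff}(\widetilde{\Lambda}_a, \widecheck{A}) = C^{\rm eff}(\underline{\widetilde{\Lambda}}_a, \widecheck{A})$, which follows because the explored strips are contained in $\widetilde{\Lambda}_a$ and the rewiring in $\underline{\mathcal{G}}$ preserves effective conductances seen from the boundary; the degenerate case $\hat{y}_{i_0} \in \widecheck{A}$ is consistent because then $C^{\rm eff}_{i_0 \check{x}}(t) \to \infty$ and $\Phi_u(\widehat{T}) \to 0$ while $\widetilde{\Lambda}_a \cap \widecheck{A} \neq \emptyset$ and $e^{-uC^{\rm eff}} = 0$.

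The main obstacle I anticipate is ensuring the induction hypothesis still applies when the exploration has progressed past the first hits of $a$ so that some tips satisfy $\tilde{\phi}_i(\widehat{T}) < a$ (violating the sign assumption on the new boundary of $\underline{\mathcal{G}}$). To deal with this I would either (i) verify that in that scenario $\widetilde{\Lambda}_a$ is already entirely contained in the explored strips, so $C^{\rm eff}(\widetilde{\Lambda}_a, \widecheck{A})$ is $\widehat{\mathcal{F}}_{\widehat{T}}$-measurable and a direct check shows $\Phi_u(\widehat{T})$ coincides with $e^{-uC^{\rm eff}}$, or equivalently (ii) introduce the auxiliary stopping time $T^\star := \widehat{T} \wedge \inf\{t : L^a(t) > 0\}$ and do the induction at $T^\star$, since at $T^\star$ all tips still satisfy $\tilde{\phi}_i(T^\star) \ge a$. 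Apart from this case-analysis the Itô computation is routine and parallel to the one in the appendix for Lemma~\ref{LemMartingale}.
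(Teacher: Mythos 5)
Your argument takes a genuinely different route from the paper's. The paper proves Proposition~\ref{ThmReffLocSet} as a short corollary of Proposition~\ref{thmDist2Sets}, with no new martingale computation and no induction: one observes that $\delta_{\widehat{A},\widecheck{A}}(\tilde\phi-a)=\delta_{\widetilde\Lambda_a,\widecheck{A}}(\tilde\phi-a)$, then uses that $\widetilde\Lambda_a$ is an optional set for $\tilde\phi$, so that conditionally on $\widetilde\Lambda_a$ the field on the complement is a GFF with boundary value $a$. Applying Proposition~\ref{thmDist2Sets} to this conditional GFF (boundary value $0$ after recentering by $a$ on $\partial\widetilde\Lambda_a$, constant on $\widecheck A$) gives the conditional law of $\delta_{\widetilde\Lambda_a,\widecheck A}(\tilde\phi-a)$ in the form $\exp(-\tfrac12 C^{\rm eff}(\widetilde\Lambda_a,\widecheck A)[\cdot])$; taking expectations and applying Proposition~\ref{thmDist2Sets} again for $\delta_{\widehat A,\widecheck A}(\tilde\phi-a)$ identifies the Laplace transform after the change of variables $u=\tfrac12(|h(\widecheck A)-a|+\ell)^2-\tfrac12(h(\widecheck A)-a)^2$. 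In effect, the exploration martingale appears only once, bundled inside Proposition~\ref{thmDist2Sets}, and $\widetilde\Lambda_a$ is handled abstractly via its local-set property.

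By contrast, you re-run the whole machinery of Section~\ref{S3} with a fresh exponential martingale $\Phi_u$ tailored to the Laplace transform. Your It\^o computation is correct: since $Q_i:=(\tilde\phi_i-a+\beta)^2-(\tilde\phi_i-h(\check x))^2=\alpha(2\tilde\phi_i-a-h(\check x)+\beta)$ is linear in $\tilde\phi_i$ (no Tanaka local-time term), one indeed finds $dF$ has drift $\sum_i\alpha(C^{\rm eff}_{i\check x})^2\,dr_i$ after the $C^{\rm eff}_{ij}$-terms cancel via $S_i-S_j=2(\tilde\phi_i-\tilde\phi_j)$, and the quadratic-variation correction $\tfrac{\alpha^2}{8}\cdot 4\sum_i(C^{\rm eff}_{i\check x})^2\,dr_i$ kills it. (Notably, this martingale property holds for \emph{any} exploration, not only the equal-local-time-at-$a$ rule; the role of that rule here is only to define the stopping time $T^\star$.) What your route buys, if carried through, is a self-contained derivation that never invokes the optional/local-set abstraction. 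What it costs is substantial: you essentially re-prove a variant of Lemma~\ref{LemMartingale} and Proposition~\ref{thmDist2Sets}, and the induction step genuinely requires the case analysis you flag. Concretely, your option~(i) does \emph{not} hold as stated at $\widehat T$: if the exploration runs past some $\rho^a_i$, the explored strips cover $[x_i,z_i(\hat r_i(\widehat T))]$ but $\widetilde\Lambda_a\cap e_i$ may still extend beyond $z_i(\rho^a_i)$ via a path entering the edge from $y_i$, so $\widetilde\Lambda_a$ need not be contained in the explored region and $C^{\rm eff}(\widetilde\Lambda_a,\widecheck A)$ need not be $\widehat{\mathcal F}_{\widehat T}$-measurable. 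Option~(ii) is the correct fix, but note that stopping at $T^\star$ introduces a new issue: on the event $T^\star<\widehat T$ the reduced graph has the \emph{same} number of edges (no edge is exhausted), so the induction does not advance; however on that event all tips sit exactly at $a$, forcing $\widetilde\Lambda_a$ to be exactly the explored strips, and one checks directly that $\Phi_u(T^\star)=\exp(-u\,C^{\rm eff}(\widetilde\Lambda_a,\widecheck A))$ (using $\beta^2-(h(\check x)-a)^2=2u$). On the complementary event $T^\star=\widehat T$ one edge is exhausted and the induction proceeds, but some tips may then carry boundary value exactly $a$, which lies at the boundary of the proposition's hypothesis $a<\min_{\widehat A}h$; one must verify separately that the formula degenerates correctly in that case. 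All of this is manageable but would need to be written out; the paper's route avoids it entirely by quoting Proposition~\ref{thmDist2Sets}.
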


\begin{proof}
We consider the pseudo-metric $\delta_{x,y}(\tilde{\phi}-a)$associated to the GFF
$\tilde{\phi}-a$. Then, by construction of 
$\widetilde{\Lambda}_{a}$,
\begin{displaymath}
\delta_{\widehat{A},\widecheck{A}}(\tilde{\phi}-a)=
\delta_{\widetilde{\Lambda}_{a},\widecheck{A}}(\tilde{\phi}-a).
\end{displaymath}
Using the fact that $\widetilde{\Lambda}_{a}$ is a local set for $\tilde{\phi}$ and applying Proposition \ref{thmDist2Sets}, we get that for all positive $\ell$, 
$$
\mathbb{P}\left(\delta_{\widetilde{\Lambda}_{a},\widecheck{A}}(\tilde{\phi}-a)\geq \ell
\Big\vert \widetilde{\Lambda}_{a}, \tilde{\phi}_{\vert \widetilde{\Lambda}_{a}}\right)=
\exp\left(-\frac{1}{2}C^{\rm eff}(\widetilde{\Lambda}_{a}, \widecheck{A})
 \left[ (h ( \widecheck A)-a+\ell)^{2}
- (h ( \widecheck A)-a)^{2} \right] \right).
$$
In above identity we have to distinguish two cases. 
Either $\widetilde{\Lambda}_{a}\cap \widecheck{A}=\emptyset$ and then $\tilde{\phi}$ equals $a$ on
$\partial\widetilde{\Lambda}_{a}\setminus\widehat{A}$. Or 
$\widetilde{\Lambda}_{a}\cap \widecheck{A}\neq\emptyset$ and then
$C^{\rm eff}(\widetilde{\Lambda}_{a}, \widecheck{A})=+\infty$ and both sides of the equality equal $0$.

Moreover,
\begin{eqnarray*}
\lefteqn { \mathbb{E}\left[\mathbb{P}\left(\delta_{\widetilde{\Lambda}_{a},\widecheck{A}}(\tilde{\phi}-a)\geq \ell
\Big\vert \widetilde{\Lambda}_{a}, \tilde{\phi}_{\vert \widetilde{\Lambda}_{a}}\right)\right]} \\
&=&
\mathbb{P}\left(\delta_{\widehat{A},\widecheck{A}}(\tilde{\phi}-a)\geq \ell\right)
\\&=&
\prod_{\substack{\hat{x}\in\widehat{A}\\\check{x}\in\widecheck{A}}}
\exp\left(-\frac{1}{2}C^{\rm eff}_A(\hat{x},\check{x})
(h(\hat{x})-a+\vert h ( \widecheck A)-a\vert+\ell)^{2}
+\frac{1}{2}C^{\rm eff}_A(\hat{x},\check{x})(h(\hat{x})-h ( \widecheck A))^{2}\right)
\\&=&
\exp\left(-\frac{1}{2}C^{\rm eff}(\widehat{A},\widecheck{A})
(m-a+\vert h ( \widecheck A)-a\vert+\ell)^{2}
+\frac{1}{2}C^{\rm eff}(\widehat{A},\widecheck{A})(m-h ( \widecheck A))^{2}\right)
.
\end{eqnarray*}
By taking
\begin{displaymath}
u=\frac{1}{2}(\vert h ( \widecheck A)-a\vert+\ell)^{2}-
\frac{1}{2}(h ( \widecheck A)-a)^{2},
\end{displaymath}
we get the proposition.
\end {proof}

Note that one can rephrase the proposition by saying that $R^{\rm eff}(\widetilde{\Lambda}_{a}, \widecheck{A})$ is distributed like the {\em last} visit time of level  $a$ 
by a Brownian bridge of length
$R^{\rm eff}(\widehat{A}, \widecheck{A})$ from
 $h ( \widecheck A)$ to $m$ (with the convention that this last time is $0$ when the bridge does not visit $a$, which corresponds to the event that $\widetilde \Lambda_a$ intersects $\widecheck A$).
Indeed, the expression of the Laplace transform  shows that the distribution of $C^{\rm eff} (\widetilde \Lambda_a, \widecheck A)$ (and therefore also the distribution of its inverse) depends only on the four parameters
$m$, $h ( \widecheck A)$, $C^{\rm eff}(\widehat{A}, \widecheck{A})$ and $a$. The distribution of $R^{\rm eff} ( \widetilde \Lambda_a, \widecheck A)$  is therefore the same as for the GFF on a graph consisting of just
one edge, i.e. for a Brownian bridge of length $R^{\rm eff}(\widehat{A}, \widecheck{A})$ from 
$m$ to $h ( \widecheck A)$. 

This can be used to obtain estimates for the discrete GFF on $\mathcal{G}$. One can for instance define
\begin{displaymath}
\Lambda_{a}:=\left\lbrace x\in V\Big\vert
\exists \gamma~\text{discrete nearest neighbour path from}~x~
\text{to}~\widehat{A},~\text{s.t.}~\inf_{\gamma\setminus\lbrace x\rbrace}\phi\geq a\right\rbrace.
\end{displaymath}
$\Lambda_{a}$ is an optional set for the discrete GFF $\phi$. $\phi$ is greater on equal to
$a$ on $\Lambda_{a}$, except on neighbours of $V\setminus\Lambda_{a}$, that is to say on the boundary
of $\Lambda_{a}$. Indeed, one has to discover a vertex $x$ with
$\phi_{x}<a$ to know where to stop. By construction,
$
\widetilde{\Lambda}_{a}\subseteq\Lambda_{a}$
and 
$
R^{\rm eff}(\Lambda_{a},\widecheck{A})\leq R^{\rm eff}(\widetilde{\Lambda}_{a},\widecheck{A})$.
Hence, if $a\leq\min_{\widehat{A}} h$ and $h$ is constant on $\widecheck{A}$, then the distribution of
$R^{\rm eff}(\Lambda_{a},\widecheck{A})$ is stochastically dominated by the law given by Proposition \ref{ThmReffLocSet}.

\medbreak

Let us now consider the following consequence of Proposition \ref {ThmReffLocSet}: This time, we consider $\widecheck A$ to be empty, so that 
$(\widetilde{\Lambda}_{a})$ starts from {\em the whole boundary $A$}. We however assume that $A$ and $V \setminus A$ are not empty. 

\begin{corollary}
\label{CorDistrib1ptandHitting}
(i) 
Let $x_{0}\in V\setminus A$ and $m=\mathbb{E}[\tilde{\phi}_{x_{0}}]$.
Let $(W_{t})_{t\geq 0}$ be a standard Brownian motion starting from $m$.
Define
\begin{displaymath}
I_{t}:=\inf_{[0,t]}W,
\qquad
T_{a}:=\inf\lbrace t\geq 0\vert W_{t}=a\rbrace .
\end{displaymath}
The joint distribution of
\begin{displaymath}
\bigg(\tilde{\phi}_{x_{0}},
(R^{\rm eff}(x_{0},A)-R^{\rm eff}(x_{0},\widetilde{\Lambda}_{a}))_{a\leq \min_{A}h},
\sup_{\substack{\gamma~\text{continuous}\\ \text{path connecting}\\x_{0}~\text{to}~A}} 
\min_{\gamma}\tilde{\phi}\wedge \min_{A}h\bigg)
\end{displaymath}
is the same as the joint distribution of
\begin{displaymath}
\left(W_{R^{\rm eff}(x_{0},A)},
(T_{a}\wedge R^{\rm eff}(x_{0},A))_{a\leq \min_{A}h},
I_{R^{\rm eff}(x_{0},A)}\wedge\min_{A}h\right).
\end{displaymath}

(ii) Consider now that the boundary condition $h$ is non-negative and define the sets
\begin{displaymath}
B(A,\ell):=\left\lbrace z\in\widetilde{\mathcal{G}}\Big\vert
\delta_{z,A}\leq \ell
\right\rbrace,~\ell\geq 0.
\end{displaymath}
The joint distribution of
\begin{displaymath}
\bigg(\vert\tilde{\phi}_{x_{0}}\vert - \delta_{x_{0},A},
(R^{\rm eff}(x_{0},A)-R^{\rm eff}(x_{0},B(A,\ell)))_{\ell\geq 0},
- \delta_{x_{0},A}\bigg)
\end{displaymath}
is the same as the joint distribution of
\begin{displaymath}
\left(W_{R^{\rm eff}(x_{0},A)},
(T_{-\ell}\wedge R^{\rm eff}(x_{0},A))_{\ell\geq 0},
I_{R^{\rm eff}(x_{0},A)}\wedge 0\right).
\end{displaymath}
\end{corollary}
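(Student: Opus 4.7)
Part (i) is the main content; part (ii) will then follow from part (i) together with the generalised L\'evy identity (Proposition~\ref{thmGeneralisedLevy}).

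The starting observation for part (i) is a universality statement: for each fixed $a\le\min_A h$ and each $u\ge 0$, the conditional Laplace transform
$$\mathbb{E}\bigl[e^{-u\,C^{\rm eff}(\widetilde{\Lambda}_a,x_0)}\,\big|\,\tilde{\phi}(x_0)=v\bigr]$$
depends only on $T=R^{\rm eff}(x_0,A)$, $m=\mathbb{E}[\tilde{\phi}(x_0)]$, $v$, $a$, $u$. Indeed, conditioning the GFF on its value at $x_0$ is equivalent to enlarging the boundary to $A'=A\cup\{x_0\}$ with boundary value $v$ at $x_0$, and this does not alter $\widetilde{\Lambda}_a$ (which depends only on $A$ and $\tilde{\phi}$). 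Proposition~\ref{ThmReffLocSet} applied with $\widehat{A}=A$ and $\widecheck{A}=\{x_0\}$ yields an explicit formula; its $C^{\rm eff}(\widehat{A},\widecheck{A})=1/T$ and its weighted-average ``$m$'' coincide with our $T$ and $m$ (the latter being the standard harmonic-measure representation of $\mathbb{E}[\tilde{\phi}(x_0)]$ via the excursion kernels). Since $\tilde{\phi}(x_0)\sim N(m,T)$ in every graph, the unconditional joint law of $(\tilde{\phi}(x_0),C^{\rm eff}(\widetilde{\Lambda}_a,x_0))$ depends only on $(T,m,a)$. We then identify this universal law on the one-edge graph $\mathcal{G}_\star$ with vertices $\{x_1,x_0\}$, boundary $\{x_1\}$, $h(x_1)=m$, and one edge of resistance $T$: on $\mathcal{G}_\star$ the metric graph GFF is literally a Brownian motion $W$ from $m$ on $[0,T]$, $\widetilde{\Lambda}_a$ is the segment from $x_1$ up to the first hit of $a$ by $W$, $\tilde{\phi}(x_0)=W_T$, and $R^{\rm eff}(x_0,A)-R^{\rm eff}(x_0,\widetilde{\Lambda}_a)=T_a\wedge T$. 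This settles the fixed-$a$ marginal of part~(i).

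The joint law in $a$ is then obtained by iterating the marginal identity via the local set property of $(\widetilde{\Lambda}_a)_a$. For $a_1>a_2$, conditionally on $\widetilde{\Lambda}_{a_1}$ and $\tilde{\phi}|_{\widetilde{\Lambda}_{a_1}}$, the restriction of $\tilde{\phi}$ to $\widetilde{\mathcal{G}}\setminus\widetilde{\Lambda}_{a_1}$ is a metric graph GFF with new boundary $\partial\widetilde{\Lambda}_{a_1}$ at constant value $a_1$, and $\widetilde{\Lambda}_{a_2}\setminus\widetilde{\Lambda}_{a_1}$ is the corresponding first passage set at level $a_2$. Effective resistances compose, so $R^{\rm eff}(x_0,\widetilde{\Lambda}_{a_2})$ in the original graph agrees with the effective resistance from $x_0$ to $\widetilde{\Lambda}_{a_2}\setminus\widetilde{\Lambda}_{a_1}$ in the subdomain. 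Applying the fixed-$a$ identity to this conditional setting with parameters $T^{(1)}=R^{\rm eff}(x_0,\widetilde{\Lambda}_{a_1})$ and $m^{(1)}=a_1$ matches the strong Markov property of $W$ at $T_{a_1}\wedge T$; induction on finitely many levels delivers the joint distribution of $(R^{\rm eff}(x_0,\widetilde{\Lambda}_{a_i}))_i$ together with $\tilde{\phi}(x_0)$. The third component of part~(i) is then extracted from the equivalence $x_0\in\widetilde{\Lambda}_a$ iff $R^{\rm eff}(x_0,\widetilde{\Lambda}_a)=0$ iff $T_a\geq T$ iff $\min_{[0,T]}W>a$, which gives $\widetilde{I}_{x_0,A}\wedge\min_A h\stackrel{d}{=} I_T\wedge\min_A h$.

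For part~(ii), the generalised L\'evy theorem produces a coupling in which $\tilde{\phi}':=|\tilde{\phi}|-\delta_{\cdot,A}$ is a metric graph GFF with boundary condition $h$ and $\delta_{\cdot,A}(\tilde{\phi})=-I_{\cdot,A}(\tilde{\phi}')$ as processes on $\widetilde{\mathcal{G}}$. Consequently $B(A,\ell)=\{\delta_{\cdot,A}(\tilde{\phi})\leq\ell\}$ is identified with $\widetilde{\Lambda}_{-\ell}(\tilde{\phi}')$, and applying part~(i) to $\tilde{\phi}'$ with $a=-\ell\leq 0\leq\min_A h$ yields the full identity; the final $\wedge 0$ comes from $I_{x_0,A}(\tilde{\phi}')=\min(0,\widetilde{I}_{x_0,A}(\tilde{\phi}'))$ combined with $\min_A h\geq 0$. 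The principal technical obstacle in this plan lies in the iteration step: one must carefully verify that $\widetilde{\Lambda}_{a_2}\setminus\widetilde{\Lambda}_{a_1}$ is indeed the first passage set for the conditional subdomain GFF, that the effective resistances of nested local sets decompose as asserted, and that monotonicity in $a$ suffices to pass from joint finite-dimensional marginals to the full process identification.
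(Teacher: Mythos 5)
Your proposal is correct and follows essentially the same route as the paper: reduce to the conditional law given $\tilde{\phi}_{x_0}$ (equivalently, adjoin $x_0$ to the boundary), invoke Proposition~\ref{ThmReffLocSet} with $\widehat{A}=A$ and $\widecheck{A}=\{x_0\}$ for the single-$a$ marginal, iterate via the first-passage/local-set structure for the joint law, and combine with Proposition~\ref{thmGeneralisedLevy} for part~(ii). The one-edge ``universality'' identification you add is a pleasant way of packaging the explicit Laplace-transform formula into the Brownian-motion picture, but it is the same underlying argument.
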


\begin{proof}
(i) First of all $\tilde{\phi}_{x_{0}}$ is distributed like $W_{R^{\rm eff}(x_{0},A)}$.
One needs to check that for any family $a_{k}<\dots<a_{2}<a_{1}\leq h$, the distribution of
\begin{displaymath}
(R^{\rm eff}(x_{0},A)-R^{\rm eff}(x_{0},\widetilde{\Lambda}_{a_{i}}))_{1\leq i\leq k}
\end{displaymath}
conditionally on $\tilde{\phi}_{x_{0}}$ is the same as the distribution
of $(T_{a_{i}}\wedge R^{\rm eff}(x_{0},A))_{1\leq i\leq k}$ conditionally on
$W_{R^{\rm eff}(x_{0},A)}$. The conditional distribution of 
$R^{\rm eff}(x_{0},A)-R^{\rm eff}(x_{0},\widetilde{\Lambda}_{a_{1}})$ follows from Proposition \ref{ThmReffLocSet}.
Then, for the conditional distribution of 
$R^{\rm eff}(x_{0},\widetilde{\Lambda}_{a_{1}})-R^{\rm eff}(x_{0},\widetilde{\Lambda}_{a_{2}})$
given $\tilde{\phi}_{x_{0}}$ and $R^{\rm eff}(x_{0},A)-R^{\rm eff}(x_{0},\widetilde{\Lambda}_{a_{1}})$, one can
iterate, using the fact that $\widetilde{\Lambda}_{a_{2}}\setminus\widetilde{\Lambda}_{a_{1}}$ is the first {passage} set of level
$a_{2}$ on $\widetilde{\mathcal{G}}\setminus\widetilde{\Lambda}_{a_{1}}$, and so on.

(ii) Combine (i) with Proposition \ref{thmGeneralisedLevy}.
\end{proof}

\section{Predictions for the two-dimensional continuum GFF}
\label{SecConj2DGFF}
\label {S4}
\subsection {CLE(4)-distance, and conjectures}

Let us briefly survey some relevant background on the two-dimensional GFF and the conformal loop ensembles CLE$_4$: 

When $D$ is an open domain in the plane with non-polar boundary, one can define the continuum GFF $\phi_{D}$ in $D$ with zero boundary conditions, associated to the Dirichlet from $ \int_D | \nabla f|^2$. Mind that, as opposed to the GFF defined on metric graphs, this GFF is not a random function anymore (it is just a random generalised function -- see eg. \cite {SheffieldGFF,Wln} for background). 

When $D$ is simply connected, one can define (see \cite{SheffieldWerner2012CLE}) a special random countable collection of Jordan loops in $D$ which do not touch each other and do not surround each other. This collection of loops (whose law is invariant under conformal transformations of the domain) is  called CLE$_4$ and can be coupled with the continuous GFF in two natural yet different ways: 
\begin {itemize}
\item 
One can couple the Gaussian free field $\phi_{D}$ and the $\hbox{CLE}_{4}$ loop ensemble as follows (this coupling was pointed out by Miller and Sheffield \cite{MillerSheffieldCLE4GFF}).
Given the {CLE}$_{4}$, for each loop $\Gamma$ sample in the domain $\hbox {Int} ( \Gamma)$ surrounded by $\Gamma$ an independent Gaussian free field $\phi_{\Gamma}$ with zero boundary condition on $\Gamma$. 
Also sample an independent uniformly distributed sign $\sigma_{\Gamma}\in\lbrace -1,1\rbrace$. Then, for a well-chosen constant $\lambda >0$ (equal to $\sqrt{\pi / 8}$ in our normalisation of the GFF), the field
\begin{displaymath}
\sum_{\Gamma\in\hbox{CLE}_{4}} 1_{\hbox{Int}(\Gamma)}(\phi_{\Gamma}+\sigma_{\Gamma} 2 \lambda)
\end{displaymath}
is a Gaussian free field $\phi_{D}$ in $D$ with zero boundary conditions 
(see also \cite{WangWu2014GFF1,MillerWatsonWilson2015NestField}). In this coupling the 
$\hbox{CLE}_{4}$ loops can be interpreted as level lines of the 
continuum Gaussian free field $\phi_{D}$ on $D$ and are in fact deterministic functions of the GFF (\cite{MillerSheffieldCLE4GFF}, see also \cite {AruSepulvedaWerner2016BoundedLocSets}). 
The discontinuity $2\lambda$ is often referred to as  the
\textit{height gap} (this was introduced by Schramm-Sheffield, 
see \cite{SchrammSheffield2013ContourContGFF}, Section 4.4 and
\cite{SchrammSheffield2009ContourDiscrGFF}, Section 1.2).
\item
There is another coupling between {CLE}$_{4}$ and the GFF $\phi_{D}$ pointed out by Sheffield, Watson and Wu (see \cite{WuThesis,SheffieldWatsonWuMetricCLE4}), which is based on the 
conformal invariant growing mechanism in {CLE}$_{4}$ constructed in 
\cite{WernerWu2013ExplorCLE}, Section 4. This conformal invariant growing mechanism associates to each loop 
$\Gamma$ in $\hbox{CLE}_{4}$ a time parameter $t(\Gamma)>0$, where the loops closer to the boundary 
$\partial D$ of the domain tend to have smaller time parameters, and the loops farther away tend to have larger time parameters. Then one samples for each loop $\Gamma$ in the interior surrounded by $\Gamma$ an independent GFF $\phi_{\Gamma}$ with zero boundary condition at $\Gamma$, which is also independent from the time parameters. The field
\begin{displaymath}
\sum_{\Gamma\in\hbox{CLE}_{4}} 1_{\hbox{Int}(\Gamma)}(\phi_{\Gamma}+2 \lambda-t(\Gamma))
\end{displaymath}
is distributed as the Gaussian free field in $D$ with $0$ boundary condition (see Theorem 1.2.2 in \cite{WangWu2014GFF1} and the references therein).
\end {itemize}

\begin{figure}[ht!]
  \centering % gives better spacing than \begin{center}...\end{center}
  \includegraphics[width=.9\textwidth]{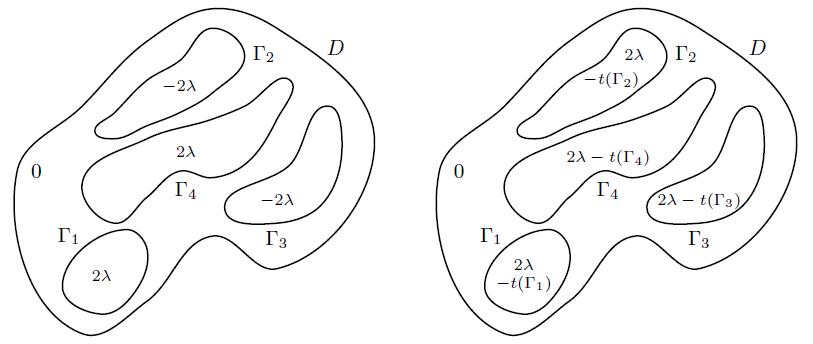}
  \caption{
Two couplings between {CLE}$_{4}$ and the GFF. Only four {CLE}$_{4}$ loops are represented. 
}
  \label{fig2Coupl}
\end{figure}

These couplings are reminiscent of the two ways to construct a one-dimensional Brownian motion (or rather a one-dimensional Brownian bridge) from a reflected Brownian motion (or a reflected Brownian bridge). The first one corresponds to tossing an independent coin to decide the sign of each excursion away from the origin by a reflected Brownian motion, while the second 
is reminiscent of L\'evy's theorem. In both cases, the set of points that are not surrounded by any $\hbox{CLE}_{4}$ loop plays the role of the zero-set of the Brownian motion (or bridge). 

While the notion of zero-set of the continuous GFF is tricky (see the previous notion of height-gap), the zero-set of the GFF on a metric graph is easy to define. As explained in 
\cite{Lupu2014LoopsGFF,Lupu2015ConvCLE}, when one approximates the continuous domain $D$ and the GFF on it, by a discrete fine-mesh lattice and the GFF on its metric graph, then the outermost boundaries of the excursion-domains away from $0$ in the metric graph do indeed converge to the CLE$_4$ loops associated to the GFF, in the fine-mesh limit. This means that in some sense, the first coupling of CLE$_4$ with the GFF can be viewed as a limit of a similar coupling on the metric graph (see \cite {Lupu2015ConvCLE} for details). 

It is therefore very natural to conjecture that the second coupling can be obtained in the same way as the scaling limit of our pseudo-metric: 
\begin{conjecture}
Let $D$ be an open bounded simply connected domain of $\mathbb{C}$.
Let $D^\varepsilon$ be a fine-mesh approximation of the domain $D$, $A^{\varepsilon}$ the approximation of the boundary $\partial D$ and $\tilde{\phi}^{\varepsilon}$ the metric graph GFF on $D^\varepsilon$ with zero boundary condition on $A^{\varepsilon}$. When $\varepsilon\to 0$, the local time distance 
$\delta_{x,A}(\tilde{\phi}^{\varepsilon})$, jointly with the outermost sign clusters of 
$\tilde{\phi}^{\varepsilon}$, converge to the time parameters $t(\Gamma)$ on CLE$_{4}$.
\end{conjecture}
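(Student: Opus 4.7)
The plan is to combine the discrete generalised Lévy identity of Proposition \ref{thmGeneralisedLevy} with Lupu's scaling limit result and a characterization of the Sheffield-Watson-Wu coupling. At the metric-graph level, set $\psi^\varepsilon := |\tilde{\phi}^\varepsilon| - \delta_{\cdot, A^\varepsilon}$; by Proposition \ref{thmGeneralisedLevy}, $\psi^\varepsilon$ is distributed as the metric-graph GFF on $D^\varepsilon$ with zero boundary conditions. The crucial observation is that $\delta_{x, A^\varepsilon}$ is \emph{constant} on every connected component $\mathcal{O}$ of $\{x \in \widetilde{D^\varepsilon} : \tilde{\phi}^\varepsilon(x) \neq 0\}$: any two points inside $\mathcal{O}$ can be joined by a path in $\mathcal{O}$ along which $\tilde{\phi}^\varepsilon$ never touches zero, so no local time at $0$ is accumulated, and hence the infimum defining $\delta$ coincides at both points. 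Denote by $t_\Gamma^\varepsilon$ this common value on the outermost sign cluster $\mathcal{O}_\Gamma^\varepsilon$ associated (in the discrete picture) to a loop $\Gamma$.

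Next I would pass to the scaling limit. Lupu \cite{Lupu2015ConvCLE} shows that the outermost sign clusters of $\tilde{\phi}^\varepsilon$ converge jointly (in the Hausdorff sense for loops) to $\hbox{CLE}_4$, with signs $\sigma_\Gamma \in \{\pm 1\}$ i.i.d., and that conditionally on the clusters the field on $\mathcal{O}_\Gamma^\varepsilon$ converges to $\sigma_\Gamma(\phi_\Gamma + 2\lambda)$, where $\phi_\Gamma$ is an independent zero-boundary continuum GFF in $\hbox{Int}(\Gamma)$ (this is precisely where the height gap $2\lambda$ enters). Consequently $|\tilde{\phi}^\varepsilon|$ restricted to $\mathcal{O}_\Gamma^\varepsilon$ converges (in the appropriate distributional sense) to $\phi_\Gamma + 2\lambda$ regardless of $\sigma_\Gamma$, and so on each cluster
\[
\psi^\varepsilon = |\tilde{\phi}^\varepsilon| - t_\Gamma^\varepsilon \;\longrightarrow\; \phi_\Gamma + 2\lambda - t_\Gamma,
\]
provided $t_\Gamma^\varepsilon \to t_\Gamma$ (which I would obtain by tightness together with the fact that $t_\Gamma^\varepsilon$ is the supremum of $\delta_{\cdot, A^\varepsilon}$ over a set converging to $\hbox{Int}(\Gamma)$). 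On the other hand, since $\psi^\varepsilon$ is itself a metric-graph GFF, $\psi^\varepsilon \to \phi_D$ (the continuum GFF) as distributions. Comparing the two limits yields the identity
\[
\phi_D = \sum_{\Gamma \in \hbox{CLE}_4} 1_{\hbox{Int}(\Gamma)}(\phi_\Gamma + 2\lambda - t_\Gamma),
\]
which is exactly the SWW decomposition.

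The final step is to identify the limiting random variables $t_\Gamma$ with the SWW time parameters, and not merely with some other family producing a valid decomposition. For this I would invoke a characterization of the SWW coupling as the unique one in which the assignment $\Gamma \mapsto t_\Gamma$ is adapted to an exploration of loops from the boundary inward and respects the natural nested Markov property (as in \cite{MillerWatsonWilson2015NestField, WangWu2014GFF1}). The discrete construction manifestly satisfies this: if $\Gamma'$ surrounds $\Gamma$, then any path from a point in $\mathcal{O}_\Gamma^\varepsilon$ to $A^\varepsilon$ must exit $\mathcal{O}_{\Gamma'}^\varepsilon$, so $t_\Gamma^\varepsilon \geq t_{\Gamma'}^\varepsilon$, and conditionally on the outer clusters the inner $t$'s are measurable functions of the restriction of $\tilde{\phi}^\varepsilon$ to the complement. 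Passing to the limit preserves these structural properties, forcing $t_\Gamma$ to coincide in law with the SWW parameters.

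The main obstacle is not the algebraic identity but the analytic control needed to make the joint convergence genuinely rigorous: (a) showing that $t_\Gamma^\varepsilon$ converges to a nondegenerate limit uniformly over all loops $\Gamma$ of macroscopic size, which requires quantitative estimates on the local time at zero of the metric-graph GFF near the CLE$_4$ boundary (where the field is forced to vanish while simultaneously realizing the height gap $2\lambda$ in the bulk); and (b) upgrading the marginal convergence into joint convergence of $(\mathcal{O}_\Gamma^\varepsilon, t_\Gamma^\varepsilon, \tilde{\phi}^\varepsilon|_{\mathcal{O}_\Gamma^\varepsilon})$, which is the step most likely to need a nontrivial tightness-plus-characterization argument. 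The uniqueness step, while conceptually clear, also requires care since pathological assignments $\Gamma \mapsto t_\Gamma$ yielding a GFF could in principle exist without the nested monotonicity; verifying that this monotonicity passes to the limit is the subtle part of our identification.
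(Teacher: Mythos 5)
This statement is an open conjecture in the paper, not a theorem; the authors do not prove it, and in fact remark that establishing it would itself yield an alternative proof of the \emph{existence} of the Sheffield--Watson--Wu metric. Your outline reproduces the heuristic that motivates the conjecture — the analogy between (sign-tossing coupling, L\'evy-type coupling) for one-dimensional reflected Brownian motion and the two CLE$_4$--GFF couplings — and you are right that Proposition~\ref{thmGeneralisedLevy}, the constancy of $\delta_{\cdot,A^\varepsilon}$ on sign clusters, and the Lupu convergence \cite{Lupu2015ConvCLE} are exactly the right ingredients. You also honestly flag obstacles (a) and (b). So as an account of \emph{why one should believe the conjecture}, your write-up is essentially the one the authors intend.

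Where it falls short of a proof is the final identification step, and there the gap is worse than you indicate. You derive the decomposition $\phi_D = \sum_\Gamma 1_{\hbox{Int}(\Gamma)}(\phi_\Gamma + 2\lambda - t_\Gamma)$ for \emph{some} family $(t_\Gamma)$ and then invoke a uniqueness characterization of the SWW parameters by adaptedness to a boundary-inward exploration plus a nested Markov property. No such uniqueness theorem is available in \cite{MillerWatsonWilson2015NestField} or \cite{WangWu2014GFF1}, and the decomposition by itself is massively underdetermined (the Miller--Sheffield coupling gives a valid decomposition with $t_\Gamma = 2\lambda(1-\sigma_\Gamma) \in \{0,4\lambda\}$, which also respects nesting monotonicity but is certainly not the SWW metric). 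More fundamentally, at the time of writing the SWW construction itself (\cite{SheffieldWatsonWuMetricCLE4}) is unpublished, so appealing to a characterization of it as a black box is circular: the whole point of the conjecture, as the paper emphasizes, is that the metric-graph limit would give an independent \emph{construction} of that metric — including, crucially, its symmetry $t_{\Gamma}(\Gamma') = t_{\Gamma'}(\Gamma)$, which is obvious on the metric graph but nontrivial from the growth mechanism. A correct strategy must therefore prove convergence of $t_\Gamma^\varepsilon$ and identify the limit directly (e.g., by matching laws of the quantities computed in Propositions~\ref{thmDist2Sets} and~\ref{ThmReffLocSet} and Corollary~\ref{CorDistrib1ptandHitting} with their known CLE$_4$ counterparts), rather than by quotienting against an unproved uniqueness statement. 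This identification, together with the tightness and uniform control near the loop boundaries that you mention in (a)--(b), is precisely why the statement remains a conjecture.
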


Actually, in the paper in preparation \cite {SheffieldWatsonWuMetricCLE4}, Sheffield, Watson and Wu show  that the collection $t(\Gamma)$ can be extended to a conformally invariant metric on the collections of 
$\hbox{CLE}_{4}$ loops in $D$ together with $\partial D$, i.e. that one can also define in a similar way the distance between two CLE$_4$ loops ($t(\Gamma)$ is the distance of the loop $\Gamma$ to $\partial D$). This is 
related to the inversion property of CLE$_4$ (see e.g. \cite{KempainenWerner2015NestedCLE}), which makes it possible to define for each loop $\Gamma_0$ the time $t_{\Gamma_0} (\Gamma)$. The main point
in \cite {SheffieldWatsonWuMetricCLE4} is then to check that $t_\Gamma (\Gamma_0) = t_{\Gamma_0} ( \Gamma)$, which is not  obvious at all from the growing mechanism point of view. 
However, this symmetry is obvious for the metric graph analogue (the distance between two excursion of the GFF away from the origin to be the minimal local time at $0$ collected by a continuous path in the metric graph that joins them, which 
is clearly symmetric). Hence, establishing the previous conjecture would provide an alternative proof of the existence of this random metric on CLE$_4$ loops.
 
To further substantiate this conjecture, one can note that in the scaling limit, the behaviour of the first {passage} sets described in Corollary \ref{CorDistrib1ptandHitting} 
is known to correspond to the corresponding quantity for the two-dimensional continuum GFF
in a simply connected domain (see for instance \cite{AruSepulvedaWerner2016BoundedLocSets}). Let $D\neq \mathbb{C}$ be such an open simply connected domain, let $\phi_{D}$ 
be the continuum GFF inside $D$ with zero boundary condition and let $z_{0}\in D$. Denote $2\lambda$ the
height gap. Given $a\in\mathbb{R}$,
one can define a random subset $\Lambda_{a}^{D}(z_{0})$ of $D$, such that
\begin{itemize}
\item $D\setminus \Lambda_{a}^{D}(z_{0})$ is an open simply connected subset of $D$ containing $z_{0}$,
\item $\Lambda_{a}^{D}(z_{0})$ is measurable with respect the GFF $\phi_{D}$,
\item conditionally on $\Lambda_{a}^{D}(z_{0})$ and $\phi_{D}$ on $\Lambda_{a}^{D}(z_{0})$, 
the restriction of $\phi_{D}$ to $D\setminus \Lambda_{a}^{D}(z_{0})$ is distributed like a GFF with boundary condition $a$,
\item $\Lambda_{a}^{D}(z_{0})$ is minimal for the above properties, that is to say any other random set satisfying the above three properties a.s. contains $\Lambda_{a}^{D}(z_{0})$.
\end{itemize}
The distribution of $\operatorname{crad}(z_{0},D\setminus \Lambda_{a}^{D}(z_{0}))$, 
the conformal radius of
$D\setminus \Lambda_{a}^{D}(z_{0})$ seen from $z_{0}$ is explicitly known. And it is known that the random variable 
\begin{equation}
\label{EqDeltaLogCrad}
\log\left(\dfrac{\operatorname{crad}(z_{0},D)}
{\operatorname{crad}(z_{0},D\setminus \Lambda_{a}^{D}(z_{0}))}\right)
\end{equation}
is distributed like the first hitting time of level ${a \pi}/ ({2\lambda})$ by a standard Brownian motion starting from $0$ (see eg. Proposition 15 in \cite{AruSepulvedaWerner2016BoundedLocSets}).
Moreover, \eqref{EqDeltaLogCrad} is the limit of
\begin{displaymath}
\dfrac{\pi^{2}}{4\lambda^{2}}(R^{\rm eff}(z_{0},\partial D^{\varepsilon})-
R^{\rm eff}(z_{0},\widetilde{\Lambda}_{a}^{\varepsilon}))
\end{displaymath}
in an approximation of $D$ by a metric graph.

\subsection {Background on Riemann surfaces and extremal distance}

We will explain why our results should also provide (in the scaling limit) some new formulas related to the GFF, loop-soups and CLE$_4$-type models on Riemann surfaces. Recall that 
on such surfaces, the SLE approach is much less obvious (when one cuts a Riemann surface open via a slit, one changes its modulus) and also that the notion of ``outside'' and ``inside'' of a cluster is also problematic. However, the scaling limit of quantities like the effective resistance still make sense, as we now explain. 

A first remark is that in two dimensions and in the scaling limit, the invariance under electric rewiring of the boundary excursion kernel is closely related to conformal invariance. 
Indeed, if $D$ denotes a (non-necessarily simply connected) open  domain in the complex plane (this definition is also immediate in Riemann surfaces), then one can define the excursion measure away from its boundary, and see 
that this excursion measure is conformally invariant (see \cite {LawlerWerner2000}, where this fact was probably first used in the context of conformally invariant structures such as percolation). This indicates that the quantities involving the effective conductance matrix (i.e. boundary excursion kernel) have nice natural scaling limits when one lets the mesh of the lattice go to $0$.

Let us first in the present subsection quickly browse through background on Riemann surfaces (we refer to \cite{FarkasKra1992RiemSurf},
\cite{Ahlfors2010ConfInv}, Sections 9 and 10,
\cite{Jurchescu1961BordRiemSurf} and \cite{Dubedat2015Virasoro1}, Sections 2 and 3 for details) and on extremal distance.
Let $\Sigma$ be a compact \textit{bordered Riemann surface} (\cite{Jurchescu1961BordRiemSurf}). 
This is a smooth compact connected surface equipped with a complex structure given by a complex analytic atlas. The local charts model $\Sigma$  on the unit disk (for interior points)
or on the semi-disk $\lbrace z\in\mathbb{C}\vert \Im(z)\geq 0,\vert z\vert <1\rbrace$ (for boundary points, and this does in fact characterise the boundary points, the border $B(\Sigma)$ will be the set of boundary points). We assume that $B(\Sigma)$ is non-empty, and $B(\Sigma)$ has then finitely many connected components, each of which is homeomorphic to a circle.
The complex structure of $\Sigma$ induces a section $J$ of the endomorphisms of the tangent bundle of 
$\Sigma\setminus B(\Sigma)$ such that $J^{2}=- Id$.

Let $g$ be a smooth metric on $\Sigma$, compatible with its complex structure, that is to say for any tangent
vector $v$, $v$ and $Jv$ are $g$-orthogonal.
$\nabla^{g}$ will denote the gradient associated to $g$,
$\Delta_{g}$ the \textit{Laplace-Beltrami operator}, which in local coordinates is expressed as
\begin{displaymath}
\Delta_{g}=\dfrac{1}{\sqrt{\det g}}\sum_{i,j\in\lbrace 1,2\rbrace}
\dfrac{\partial}{\partial x_{i}} g^{ij} \sqrt{\det g} \dfrac{\partial}{\partial x_{j}}.
\end{displaymath}
$d\nu_{g}$ will denote the volume form associated with $g$, which in local coordinates is
\begin{displaymath}
d\nu_{g}=\sqrt{\det g}   \ ( dx_{1}\wedge dx_{2}).
\end{displaymath}
Given any other smooth metric $g'$ compatible with $\Sigma$, $g'$ and $g$ are conformally equivalent, that is to say there is a smooth function $\rho$ such that $g'=e^{\rho}g$. Moreover
\begin{displaymath}
\nabla^{g'}=e^{-\rho}\nabla^{g}, \qquad
\Delta_{g'}=e^{-\rho}\Delta_{g}, \qquad
d\nu_{g'}=e^{\rho} d\nu_{g}.
\end{displaymath}
In particular, the set of $\Delta_{g}$-harmonic functions and the Dirichlet form
\begin{equation}
\label{EqDirichletSigma}
\mathcal{E}_{\Sigma}(f,f)=\int_{\Sigma}g(\nabla^{g}f,\nabla^{g}f) d\nu_{g}
\end{equation}
do not depend on the particular choice of a metric compatible with the complex structure of $\Sigma$.

Given $K_{1}$ and $K_{2}$ two compact non-polar subsets of $\Sigma$, the {\em extremal distance}
$\operatorname{ED}(K_{1},K_{2})$ between $K_{1}$ and $K_{2}$ in $\Sigma$ is defined as follows. First, for a given metric $g$ that is 
compatible with the complex structure of $\Sigma$, one can define the distance $d_g (K_1, K_2)$ as the infimum over all 
continuous smooth paths joining $K_1$ to $K_2$ of the $\rho$-length of this path. 
Then, $\operatorname{ED}(K_{1},K_{2})$ is the supremum over all $g$'s that are compatible with the complex structure of $\Sigma$ of 
$$ 
\dfrac{d_g (K_1, K_2))^{2}}{\nu_{g}(\Sigma\setminus (K_{1}\cup K_{2}))}
$$
(this quantity is also known as the extremal length of the set of all paths connecting $K_1$ to $K_2$ in $\Sigma$).

Let us quickly review some of the properties of extremal distance: 
\begin {itemize}
 \item 
By definition, it is invariant under conformal transformations.
\item
It is also the inverse of the Dirichlet energy of the  the harmonic function $u$ on
$\Sigma\setminus (K_{1}\cup K_{2})$, with Dirichlet boundary condition $0$ on $K_{1}$, $1$ on
$K_{2}$ and zero Neumann boundary condition on
$B(\Sigma)\setminus (K_{1}\cup K_{2})$ (the normal derivative vanishes),
\begin{equation}
\label{EqELEnergy}
\operatorname{ED}(K_{1},K_{2})=\mathcal{E}_{\Sigma}(u,u)^{-1}
\end{equation}
(see \cite{Ahlfors2010ConfInv}, Section 4.9, for a proof in case $\Sigma$ is a domain of 
$\mathbb{C}$). 
\item When one  approximates $\Sigma$ by electrical networks $\Sigma^{\varepsilon}$ and
$K_{1}$ and $K_{2}$ by subsets of vertices $K_{1}^{\varepsilon}$ and $K_{2}^{\varepsilon}$, then
the extremal distance $\operatorname{ED}(K_{1},K_{2})$ is the limit as $\varepsilon$ tends to $0$
of the effective resistances $ R^{\rm eff}(K_{1}^{\varepsilon},K_{2}^{\varepsilon})$. 
Indeed, the effective resistances have a representation similar to \eqref{EqELEnergy}:
\begin{displaymath}
R^{\rm eff}(K_{1}^{\varepsilon},K_{2}^{\varepsilon})^{-1} 
=C^{\rm eff}(K_{1}^{\varepsilon},K_{2}^{\varepsilon})
=  \sum_{e~\text{edge}} \left( C(e)(u_{\varepsilon}(e^{+})-u_{\varepsilon}(e^{-}))^{2}\right),
\end{displaymath}
where $e^{+}$ and $e^{-}$ denote the end-vertices of an edge $e$ and $u_{\varepsilon}$ is a harmonic function on the vertices of $\Sigma^{\varepsilon}$, with boundary condition $0$ on $K_{1}^{\varepsilon}$ and $1$ on $K_{2}^{\varepsilon}$ (see also \cite{Duffin1962ELRes} for an interpretation of the effective resistance as the extremal
distance of a network with a similar representation).
\item 
Related to the previous fact and to the fact that the excursion measures of the Brownian motions on the metric graphs $\Sigma^{\varepsilon}$ converge to the excursion measure of Brownian motion in $\Sigma$ (away from their respective Dirichlet boundaries -- the Brownian motions are reflected on the Neumann boundary), we see that one can express the extremal distance in terms of the inverse of the total mass of the set of Brownian excursions that start on $\Sigma_1$ and end on $\Sigma_2$. This total mass is the scaling limit of the effective conductance while extremal distance is the scaling limit of effective resistance. 
\end {itemize}

\subsection {Conjectures for some GFF fuctionals on Riemann surfaces}

Consider now a partition of the border $B(\Sigma)$ of the Riemann surface $\Sigma$ into three parts $B_{1}(\Sigma),B_{2}(\Sigma),B_{3}(\Sigma)$. Here 
 $B_{3}(\Sigma)$ is allowed to be empty but not  $B_{1}(\Sigma)$ and $B_{2}(\Sigma)$. We suppose further that each of the
$B_{i}(\Sigma)$, $i\leq i\leq 3$, contains finitely many connected components and that
$B_{1}(\Sigma)$ and $B_{2}(\Sigma)$ are at positive distance of each other, i.e. their topological closures do not intersect.
To fix ideas, the reader can think of the following two simple examples:
\begin{itemize}
\item $\Sigma$ is a rectangle $Q_{R}=[0,1]\times[0,R]$,
$B_{1}(Q_{R})=[0,1]\times \lbrace 0\rbrace$,
$B_{2}(Q_{R})=[0,1]\times \lbrace R\rbrace$ and
$B_{3}(Q_{R})=\lbrace 0,1\rbrace\times (0,R)$.
\item $\Sigma$ is the annulus $
\mathcal{A}_{R}=\lbrace z\in\mathbb{C}\vert 1\leq \vert z\vert\leq R\rbrace$, 
$B_{1}(\mathcal{A}_{R})$ is the inner circle, $B_{2}(\mathcal{A}_{R})$ is the outer circle and
$B_{3}(\mathcal{A}_{R})$ is empty.
\end{itemize}
We now consider $\phi_{\Sigma}$ the Gaussian free field on $\Sigma$ associated to the Dirichlet form
\eqref{EqDirichletSigma}, with zero boundary condition on $B_{1}(\Sigma)\cup B_{2}(\Sigma)$ and
free boundary condition on $B_{3}(\Sigma)$ (recall that it is a random generalised function but not a random function).
The Wick square $:\phi_{\Sigma}^{2}:$ of $\phi_{\Sigma}$ is then also a random generalised function that can be  defined via the limit of regularisations
$\phi_{\Sigma,\varepsilon}$ of $\phi_{\Sigma}$ as 
\begin{displaymath}
:\phi_{\Sigma}^{2}: =
\lim_{\varepsilon \rightarrow 0}  \left [ \phi_{\Sigma,\varepsilon}^{2} - \mathbb{E}[\phi_{\Sigma,\varepsilon}^{2}] \right] 
\end{displaymath}
(recall that it is also a random generalised function).

The field $\phi_{\Sigma}$ is related to a Brownian loop-soup. Let $g$ be a smooth metric compatible with the complex
structure of $\Sigma$ and consider the Brownian motion on $\Sigma$ with generator $\Delta_{g}$, killed on
the border $B_{1}(\Sigma)\cup B_{2}(\Sigma)$ and instantaneously {normally} reflected on
$B_{3}(\Sigma)$. Let $p^{g}_{t}(z_{1},z_{2})$ be the associated transition densities with respect the measure
$d\nu_{g}(z_{2})$ and $\mathbb{P}^{g}_{t,z_{1},z_{2}}$ the bridge probabilities, with the conditioning
not to hit $B_{1}(\Sigma)\cup B_{2}(\Sigma)$. Then we can define an infinite measure on time-parametrised loops inside $\Sigma$ by
\begin{displaymath}
\mu^{\rm loop}_{g}(\cdot):=
\int_{\Sigma}\int_{0}^{+\infty}
\mathbb{P}^{g}_{t,z,z}(\cdot) p^{g}_{t}(z,z) \dfrac{dt}{t} d\nu_{g}(z).
\end{displaymath}
If $g'$ is another conformally equivalent metric, then $\mu^{\rm loop}_{g'}$ is the image of 
$\mu^{\rm loop}_{g}$ under a change of the root (starting and endpoint) and a change of parametrisation of loops, but the measure induced on the set occupied by a loop is the same 
(\cite{Dubedat2015Virasoro1}, Section 3).
See \cite{LawlerLimic, LeJan2011Loops, LeJanMarcusRosen2012Loops} for the definition of the loop measure of a Markov process in a wider framework.

Let $\mathcal{L}_{g,1/2}$ be the Poisson point process of intensity $\mu^{\rm loop}_{g } /2$.
We see it as a random infinite countable collection of Brownian loops. It is a \textit{Brownian loop-soup}
after the terminology of \cite{LawlerWerner2004ConformalLoopSoup}. The intensity parameter
${1}/{2}$ corresponds to the central charge $c=1$ (\cite{Lawler2009PartFuncSLELoopMes}).
The law of $\mathcal{L}_{g,{1}/{2}}$ is invariant, up to rerooting and reparametrisation, under conformal
transformations of $\Sigma$. The centred occupation field of $\mathcal{L}_{g, {1}/{2}}$,
$L^{\rm ctr}(\mathcal{L}_{g,{1}/{2}})$, is defined as follows:
\begin{displaymath}
L^{\rm ctr}(\mathcal{L}_{g,{1}/{2}})(f):=
\lim_{\varepsilon\rightarrow 0} \left( 
\sum_{\gamma\in\mathcal{L}_{g,{1}/{2}}, t_{\gamma}>\varepsilon}
\int_{0}^{t_{\gamma}}f(\gamma(s)) ds
-\mathbb{E}\bigg[
\sum_{\gamma\in\mathcal{L}_{g,{1}/{2}}, t_{\gamma}>\varepsilon}
\int_{0}^{t_{\gamma}}f(\gamma(s)) ds
\bigg] \right),
\end{displaymath}
where $t_{\gamma}$ is the lifetime of the loop $\gamma$.
It turns out that this centered field $L^{\rm ctr}(\mathcal{L}_{g,{1}/{2}})$ is distributed exactly as 
\begin{displaymath}
\dfrac{1}{2}:\phi_{\Sigma}^{2}: d\nu_{g}.
\end{displaymath}
For the existence of the centered occupation field and the equality in law on an open subset of $\mathbb{C}$,
see \cite{LeJan2011Loops}, chapter 10.

The following informal description could be helpful: The field $\phi_{\Sigma}$ is non-zero and has a constant sign on each loop
of $\mathcal{L}_{g,{1}/{2}}$. The field $\frac{1}{2}\phi_{\Sigma}^{2} d\nu_{g}$ is the occupation field of the loops. The zero set of $\phi_{\Sigma}$ can be viewed as
the complementary of the set of points visited by the loops of 
$\mathcal{L}_{g,{1}/{2}}$. Since $\phi_{\Sigma}$ is a generalised function, this intuition needs to be made more precise (see also \cite {QW}). However this relation between the loops and the GFF is exact on 
a metric graph (\cite{Lupu2014LoopsGFF}).

\medbreak

Let us now list three examples (more general variants are easy to obtain along the same lines) of concrete conjectures in the present setup, that should correspond to the scaling limit of the formulas derived on metric graphs : 

\begin {enumerate}
 \item 
Let us consider the clusters of $\mathcal{L}_{g,{1}/{2}}$. Two loops belong to the same cluster if they are connected by a finite chain of loops in $\mathcal{L}_{g,{1}/{2}}$, where any two consecutive loops
intersect each other. If $\Sigma$ is a simply connected domain of $\mathbb{C}$, the outer boundaries of
outermost clusters, i.e. not surrounded by an other cluster, are distributed like the $\hbox{CLE}_{4}$ 
conformal loop ensemble (\cite{SheffieldWerner2012CLE}).

\begin{conjecture}
\label{ConjMetricRiemSurf}
There is a metric on the clusters of $\mathcal{L}_{g,{1}/{2}}$ inside $\Sigma$, measurable
with respect to $\mathcal{L}_{g,{1}/{2}}$, which is obtained as the limit of the  pseudo-metric on metric graphs approximating $\Sigma$. The metric does not depend on the time-parametrisation of loops and on the particular choice of $g$ compatible with the complex structure of $\Sigma$ (in the case where $\Sigma$ is a simply connected domain of $\mathbb{C}$, the metric is the one given by the conformal invariant growth mechanism inside $\hbox{CLE}_{4}$). As in Proposition \ref{thmDist2Sets}, the square of the distance between
$B_{1}(\Sigma)$ and $B_{2}(\Sigma)$ induced by the metric on clusters of $\mathcal{L}_{g,{1}/{2}}$
is an exponential random variable, and its mean is $2  \operatorname{ED}(B_{1}(\Sigma),B_{2}(\Sigma))$. 
\end{conjecture}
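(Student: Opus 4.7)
The strategy is to obtain the conjecture as the scaling limit of the exact discrete identities proven in Sections \ref{S2} and \ref{S3}. I would fix a family of metric graph approximations $(\Sigma^{\varepsilon})_{\varepsilon>0}$ of $\Sigma$: refine a triangulation compatible with the complex structure, identify $B_{1}^{\varepsilon}, B_{2}^{\varepsilon}$ with the vertices on $B_{1}(\Sigma), B_{2}(\Sigma)$, and treat vertices meeting $B_{3}(\Sigma)$ as interior, so that the random walk killed on $A := B_{1}^{\varepsilon} \cup B_{2}^{\varepsilon}$ reflects on $B_{3}(\Sigma)$ in the limit (modelling the free boundary). Let $\tilde{\phi}^{\varepsilon}$ be the metric graph GFF on $\Sigma^{\varepsilon}$ with zero boundary conditions on $A$, let $\delta^{\varepsilon}$ denote the induced local-time pseudo-metric, and set $d^{\varepsilon} := \delta^{\varepsilon}_{B_{1}^{\varepsilon}, B_{2}^{\varepsilon}}$.

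The distributional computation is the easy part. Since $h \equiv 0$, Proposition \ref{thmDist2Sets} gives
\[
\mathbb{P}(d^{\varepsilon} \geq \ell) = \exp\!\Bigl(-\tfrac{1}{2}\,\ell^{2} \sum_{\hat{x} \in B_{1}^{\varepsilon},\,\check{x} \in B_{2}^{\varepsilon}} C^{\rm eff}_{A}(\hat{x},\check{x})\Bigr) = \exp\!\Bigl(-\frac{\ell^{2}}{2\,R^{\rm eff}(B_{1}^{\varepsilon}, B_{2}^{\varepsilon})}\Bigr),
\]
because short-circuiting $B_{1}^{\varepsilon}$ and $B_{2}^{\varepsilon}$ separately reveals the total cross-conductance as the reciprocal of $R^{\rm eff}(B_{1}^{\varepsilon}, B_{2}^{\varepsilon})$. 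As already recorded in the paper, $R^{\rm eff}(B_{1}^{\varepsilon}, B_{2}^{\varepsilon}) \to \operatorname{ED}(B_{1}(\Sigma), B_{2}(\Sigma))$, so the limiting distribution of $(d^{\varepsilon})^{2}$ is exponential with mean $2\,\operatorname{ED}(B_{1}(\Sigma), B_{2}(\Sigma))$, exactly the quantitative half of the conjecture.

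What remains is to upgrade the discrete pseudo-metrics $\delta^{\varepsilon}$ into a bona fide random pseudo-metric $\delta$ on $\Sigma$, measurable with respect to $\mathcal{L}_{g,1/2}$, so that the limiting $d$ actually equals $\delta_{B_{1}(\Sigma), B_{2}(\Sigma)}$. My plan is: (i) combine the known convergence of the outer boundaries of positive/negative excursion clusters of $\tilde{\phi}^{\varepsilon}$ to the cluster boundaries of $\mathcal{L}_{g,1/2}$ (via Lupu's couplings \cite{Lupu2014LoopsGFF,Lupu2015ConvCLE}, together with the identification of intensity-$1/2$ loop-soup clusters with $\hbox{CLE}_{4}$ in the simply connected case) with (ii) tightness of the family $\{\delta^{\varepsilon}\}$ in a suitable topology, for instance Gromov--Hausdorff after quotienting by zero-distance equivalence. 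Tightness and identification of subsequential limits can be driven by iterating Proposition \ref{thmDist2Sets} between pairs of deterministic compact sets to obtain exponential tails and all moments of $\delta^{\varepsilon}$. Conformal invariance and independence from the metric $g$ then follow automatically once $\delta$ is expressed through $\mathcal{L}_{g,1/2}$ alone, since the loop soup itself is conformally invariant up to reparametrization. Agreement with the Sheffield--Watson--Wu metric on $\hbox{CLE}_{4}$ in the simply connected case reduces to matching one-point marginals: Corollary \ref{CorDistrib1ptandHitting}(ii) identifies the interior-to-boundary pseudo-distance with a Brownian hitting time, and the $\hbox{CLE}_{4}$ growth process produces exactly the same Brownian hitting structure.

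The main obstacle is step (ii): proving convergence of $\delta^{\varepsilon}$ as a random pseudo-metric, not merely in its finite-dimensional marginals between fixed sets. The pseudo-metric is built from the local time at level zero of $\tilde{\phi}^{\varepsilon}$, which concentrates on the fractal cluster boundaries; outside the metric-graph discretisation there is no intrinsic continuum notion of ``local time at zero along a continuous path'' for the generalised-function GFF $\phi_{\Sigma}$. The natural workaround is to \emph{define} the limiting metric directly through the $\hbox{CLE}_{4}$/loop-soup cluster structure (extending \cite{SheffieldWatsonWuMetricCLE4} from simply connected domains to bordered Riemann surfaces via the loop-soup description) and then check compatibility with $\delta^{\varepsilon}$ using the explicit distributional identities above. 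Carrying out this bridge rigorously is the real analytical content of the conjecture.
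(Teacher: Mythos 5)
The statement you are attempting to prove is explicitly labelled as a \emph{conjecture} in the paper, and the authors do not provide a proof of it --- they only explain why it is plausible, namely that the metric graph identities (Proposition~\ref{thmDist2Sets}) should pass to the scaling limit with effective resistances becoming extremal distances. Your ``proof proposal'' essentially reconstructs that same heuristic, so there is no genuine alternative route here to compare.

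The part of your write-up that is rigorous is the discrete distributional identity. Taking $h\equiv 0$ in Proposition~\ref{thmDist2Sets} gives
\[
\mathbb{P}(d^{\varepsilon}\geq \ell)
=\exp\!\Bigl(-\tfrac{\ell^{2}}{2}\,\sum_{\hat{x},\check{x}}C^{\rm eff}_{A}(\hat{x},\check{x})\Bigr)
=\exp\!\Bigl(-\tfrac{\ell^{2}}{2\,R^{\rm eff}(B_{1}^{\varepsilon},B_{2}^{\varepsilon})}\Bigr),
\]
and the reduction of the full cross-conductance to $1/R^{\rm eff}(B_{1}^{\varepsilon},B_{2}^{\varepsilon})$ by shorting is correct. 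Combined with $R^{\rm eff}(B_{1}^{\varepsilon},B_{2}^{\varepsilon})\to\operatorname{ED}(B_{1}(\Sigma),B_{2}(\Sigma))$, this indeed pins down the asserted limit law for $(d^{\varepsilon})^{2}$ --- the same computation the paper implicitly invokes to motivate the conjecture.

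However, you should be clear (as you partly are) that this does not constitute a proof of the statement. The substance of the conjecture is precisely the existence of a \emph{continuum random pseudo-metric} measurable with respect to $\mathcal{L}_{g,1/2}$, obtained as a genuine scaling limit of $\delta^{\varepsilon}$, together with conformal invariance and coincidence with the Sheffield--Watson--Wu metric in the simply connected case. Your steps~(i) and~(ii) are not carried out: tightness in a Gromov--Hausdorff-type topology does not follow from the two-set marginal laws of Proposition~\ref{thmDist2Sets} (indeed the paper itself warns, via the star--triangle discussion, that joint laws of several pseudo-distances are \emph{not} invariants of the conductance matrix, so controlling finite-dimensional marginals is already delicate); and identifying a subsequential limit with a cluster metric requires a continuum object whose construction is exactly what is open (the paper cites \cite{SheffieldWatsonWuMetricCLE4} as being ``in preparation'' and only for simply connected domains). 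You correctly name this gap at the end; the point is that the gap is the entire theorem, not a technical remainder. In short, the proposal is a faithful and correct expansion of the paper's motivating discussion, but the conjecture remains a conjecture.
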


\item
We consider now the case where the field $\phi_{\Sigma}$ has constant boundary condition $h_{2}$ on $B_{2}(\Sigma)$,
where it is not necessarily constant on $B_{1}(\Sigma)$, and has free boundary condition on $B_{3}(\Sigma)$.
Some regularity condition is needed for the boundary condition $h_1$ on $B_{1}(\Sigma)$. 
We for instance assume that $h_1$ is continuous and bounded on $B_1 (\Sigma)$
(but more general setups are also possible). 

We can consider the excursion measure $\mu$ of Brownian motion in $\Sigma$ (reflected on $B_3$ and killed upon hitting $B_1$ and $B_2$), and restrict it to the set of excursions that start on $B_1$ and end on $B_2$. This set of excursions has finite mass, so that it is possible to renormalise $\mu$ into a probability measure, and to consider the 
expected value of $h$ at the starting point of this excursion (according to this probability measure). We call this quantity $m$ (and note that it is 
exactly the continuous 2D counterpart of the quantity $m$ defined just before Proposition \ref {ThmReffLocSet}). Note that when $h_1$ is constant, then $m$ is equal to that constant. 

We assume that $a < \min_{B_1 (\Sigma)} h_1$.
We consider the local set $\Lambda_{a}^{\Sigma}$ for $\phi_{\Sigma}$, with the following properties:
\begin{itemize}
\item $\Lambda_{a}^{\Sigma}$ is compact,
\item $\Lambda_{a}^{\Sigma}$ contains $B_{1}(\Sigma)$,
\item $\Lambda_{a}^{\Sigma}$ is measurable with respect the GFF $\phi_{\Sigma}$,
\item Conditionally on $\Lambda_{a}^{\Sigma}$ and $\phi_{\Sigma}$ on $\Lambda_{a}^{\Sigma}$, 
the restriction of $\phi_{\Sigma}$ to $\Sigma\setminus \Lambda_{a}^{\Sigma}$ is distributed like a GFF with boundary condition $a$ on $\Lambda_{a}^{\Sigma}$, $h_{2}$ on $B_{2}(\Sigma)\setminus \Lambda_{a}^{\Sigma}$
and free on $B_{3}(\Sigma)\setminus \Lambda_{a}^{\Sigma}$,
\item $\Lambda_{a}^{\Sigma}$ is minimal for the above properties, that is to say any other random set satisfying the above four properties a.s. contains $\Lambda_{a}^{\Sigma}$.
\end{itemize}
$\Lambda_{a}^{\Sigma}$ is a first {passage} set, analogous to 
$\widetilde{\Lambda}_{a}$ on the metric graph, where $B_{1}(\Sigma)$ plays the role of $\widehat{A}$. 
$\Lambda_{a}^{\Sigma}$ can be for instance constructed out of a metric graph approximation of $\Sigma$. 

Proposition~\ref{ThmReffLocSet}  (and the comments that follow it) leads to the following: 
\begin{conjecture}
\label{ConjEL}
The extremal distance $\operatorname{ED}(\Lambda_{a}^{\Sigma},B_{2}(\Sigma))$ is distributed like the {\em last} visit time of the level $a$ by a Brownian bridge of length
$\operatorname{ED}(B_{1}(\Sigma),B_{2}(\Sigma))$ from $h_2$ to $m$ (this last time is defined to be $0$ when the bridge does not hit $a$ at all). 
\end{conjecture}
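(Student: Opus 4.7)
The plan is to obtain the statement as a scaling limit of Proposition \ref{ThmReffLocSet} applied to a sequence of metric graph approximations $\Sigma^\varepsilon$ of $\Sigma$, with $\widehat{A}=B_1^\varepsilon$ an $\varepsilon$-discretisation of $B_1(\Sigma)$ (carrying the boundary condition $h_1$ suitably discretised) and $\widecheck{A}=B_2^\varepsilon$ carrying the constant boundary condition $h_2$; the free part $B_3(\Sigma)$ is modelled by reflection at the corresponding boundary vertices (i.e. the discrete Neumann condition). Proposition \ref{ThmReffLocSet} then tells us, for each $\varepsilon$, that
\begin{displaymath}
\mathbb{E}\bigl[\exp(-u\,C^{\rm eff}(\widetilde{\Lambda}_a^\varepsilon,B_2^\varepsilon))\bigr]
=\exp\!\Bigl(-\tfrac12 C^{\rm eff}(B_1^\varepsilon,B_2^\varepsilon)\bigl[(m^\varepsilon-a+\sqrt{(h_2-a)^2+2u})^2-(m^\varepsilon-h_2)^2\bigr]\Bigr),
\end{displaymath}
which is exactly the Laplace transform of the last visit of level $a$ by a Brownian bridge of length $R^{\rm eff}(B_1^\varepsilon,B_2^\varepsilon)$ between $h_2$ and $m^\varepsilon$.

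The first step is to check the convergence of the scalar ingredients. The convergence $R^{\rm eff}(B_1^\varepsilon,B_2^\varepsilon)\to \operatorname{ED}(B_1(\Sigma),B_2(\Sigma))$ is the version of the classical convergence of discrete to continuous Dirichlet energies recalled in Section \ref{S4} (use the variational characterisation \eqref{EqELEnergy} and mixed Dirichlet/Neumann harmonic extensions on $\Sigma^\varepsilon$ that converge to the continuous one on $\Sigma$). The convergence $m^\varepsilon\to m$ follows from the interpretation of $m$ as an average of $h_1$ against the boundary excursion kernel from $B_1$ to $B_2$, combined with the convergence of discrete boundary excursion kernels to their continuous counterpart (as recalled at the end of the subsection on Riemann surfaces); continuity and boundedness of $h_1$ on $B_1(\Sigma)$ is enough here.

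The main step, and the main obstacle, is to promote this to the convergence in distribution of $C^{\rm eff}(\widetilde{\Lambda}_a^\varepsilon,B_2^\varepsilon)$ to the conductance between the continuum first passage set $\Lambda_a^\Sigma$ and $B_2(\Sigma)$. One needs (i) that the metric graph GFF $\tilde\phi^\varepsilon$ with the prescribed boundary data converges, jointly with its first passage set $\widetilde{\Lambda}_a^\varepsilon$ from $B_1^\varepsilon$, to $(\phi_\Sigma,\Lambda_a^\Sigma)$ in a suitable topology, and (ii) that $C^{\rm eff}(\cdot,B_2^\varepsilon)$ is continuous under this convergence. For (i) one can combine the coupling between discrete and metric graph GFF of \cite{Lupu2014LoopsGFF,Lupu2015ConvCLE} with a local-set characterisation of $\Lambda_a^\Sigma$ analogous to the one used in \cite{AruSepulvedaWerner2016BoundedLocSets} for the simply connected case, plus the conformal/Riemann-surface independence noted after Proposition \ref{ThmReffLocSet} that reduces the identification of the limit to showing that it satisfies the four bullet points defining $\Lambda_a^\Sigma$. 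For (ii) the natural route is again via the variational formula \eqref{EqELEnergy} combined with compactness of the sets $\widetilde{\Lambda}_a^\varepsilon$; the delicate point is a tightness-type bound ensuring that no non-trivial pieces of $\widetilde{\Lambda}_a^\varepsilon$ drift towards $B_2(\Sigma)$ in the limit and create spurious conductance. Once these two points are in place, passing to the limit $\varepsilon\to 0$ in the Laplace transform identity above yields the conjectured distribution of $\operatorname{ED}(\Lambda_a^\Sigma,B_2(\Sigma))$, and hence, by inversion of the Laplace transform, the identification with the last visit of level $a$ by a Brownian bridge of length $\operatorname{ED}(B_1(\Sigma),B_2(\Sigma))$ from $h_2$ to $m$.
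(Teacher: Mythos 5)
The statement you set out to prove is presented in the paper as a \emph{conjecture}, not a theorem: the authors explicitly write that Proposition~\ref{ThmReffLocSet} and the remarks following it ``lead to'' Conjecture~\ref{ConjEL}, and no proof is given. There is therefore no proof in the paper against which to compare your argument.

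Your proposal faithfully reproduces the paper's own heuristic route to the conjecture. The scalar identifications you make are correct: the metric-graph Laplace transform from Proposition~\ref{ThmReffLocSet}, its rephrasing as the last-visit time of level $a$ by a Brownian bridge with parameters $\bigl(R^{\rm eff}(\widehat A,\widecheck A),\,h(\widecheck A),\,m\bigr)$, the convergence of $R^{\rm eff}(B_1^\varepsilon,B_2^\varepsilon)$ to $\operatorname{ED}(B_1(\Sigma),B_2(\Sigma))$ via \eqref{EqELEnergy}, and the convergence of $m^\varepsilon$ via the convergence of the boundary excursion kernel. (One small remark: on a metric graph, imposing a ``discrete Neumann condition'' on $B_3^\varepsilon$ is vacuous --- those are just interior vertices not in $A$, and Proposition~\ref{ThmReffLocSet} is applied directly with $A=B_1^\varepsilon\cup B_2^\varepsilon$.)

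However, what you call ``the main step, and the main obstacle'' is indeed an obstacle and not merely a step: the joint convergence of $(\tilde\phi^\varepsilon,\widetilde\Lambda_a^\varepsilon)$ to $(\phi_\Sigma,\Lambda_a^\Sigma)$ in a topology for which the effective conductance functional $K\mapsto C^{\rm eff}(K,B_2^\varepsilon)$ is continuous, and the accompanying tightness estimate excluding spurious mass of $\widetilde\Lambda_a^\varepsilon$ accumulating near $B_2(\Sigma)$, are not established anywhere in the paper or in the cited references for bordered Riemann surfaces with a free boundary component, and you do not supply an argument for them either. So despite the word ``Proof'' at the top, your text is a roadmap that correctly identifies what would have to be proved (and in that sense agrees with the authors' own implicit reasoning), but it does not close the conjecture. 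It would be cleaner to present it as a conditional statement: assuming convergence of the first passage sets in a topology making the extremal distance continuous, the conjecture follows by passing to the limit in Proposition~\ref{ThmReffLocSet}.
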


\item 
We finally consider the case there $\phi_{\Sigma}$ has some sufficiently regular boundary condition $h$ on $B_{1}(\Sigma)\cup B_{2}(\Sigma)$ and
free boundary condition on $B_{3}(\Sigma)$. Let $z_{0}\in \Sigma\setminus B(\Sigma)$.
We consider the local set $\Lambda_{a}^{\Sigma}(z_{0})$ for $\phi_{\Sigma}$, with the following properties:
\begin{itemize}
\item $\Lambda_{a}^{\Sigma}(z_{0})$ is compact,
\item $\Lambda_{a}^{\Sigma}(z_{0})$ contains $B_{1}(\Sigma)\cup B_{2}(\Sigma)$,
\item $\Sigma\setminus \Lambda_{a}^{\Sigma}(z_{0})$ is connected and contains $z_{0}$,
\item $\Lambda_{a}^{\Sigma}(z_{0})$ is measurable with respect the GFF $\phi_{\Sigma}$,
\item conditionally on $\Lambda_{a}^{\Sigma}(z_{0})$ and $\phi_{\Sigma}$ on $\Lambda_{a}^{\Sigma}(z_{0})$, 
the restriction of $\phi_{\Sigma}$ to $\Sigma\setminus \Lambda_{a}^{\Sigma}(z_{0})$ is distributed like a GFF with boundary condition $a$ on $\Lambda_{a}^{\Sigma}(z_{0})$
and free on $B_{3}(\Sigma)\setminus \Lambda_{a}^{\Sigma}(z_{0})$,
\item $\Lambda_{a}^{\Sigma}(z_{0})$ is minimal for the above properties, that is to say any other random set satisfying the above five properties a.s. contains $\Lambda_{a}^{\Sigma}(z_{0})$.
\end{itemize}
Let $G_{\Sigma}(z_{1},z_{2})$ denote the Green function of the Laplacian on $\Sigma$ with zero Dirichlet
boundary condition on $B_{1}(\Sigma)\cup B_{2}(\Sigma)$ and zero Neumann boundary condition on
$B_{3}(\Sigma)$. Let $G_{\Sigma\setminus \Lambda_{a}^{\Sigma}(z_{0})}(z_{1},z_{2})$ denote the Green function of the Laplacian on $\Sigma\setminus \Lambda_{a}^{\Sigma}(z_{0})$ with zero Dirichlet
boundary condition on $\Lambda_{a}^{\Sigma}(z_{0})$ and zero Neumann boundary condition on
$B_{3}(\Sigma)\setminus \Lambda_{a}^{\Sigma}(z_{0})$. For any 
$z_{1}\in \Sigma\setminus \Lambda_{a}^{\Sigma}(z_{0})$, the function
\begin{displaymath}
z_{2}\mapsto G_{\Sigma}(z_{1},z_{2})-G_{\Sigma\setminus \Lambda_{a}^{\Sigma}(z_{0})}(z_{1},z_{2})
\end{displaymath}
is harmonic on $\Sigma\setminus \Lambda_{a}^{\Sigma}(z_{0})$ and thus has a continuous extension at 
$z_{2}=z_{1}$. Moreover
\begin{displaymath}
G_{\Sigma}(z_{1},z_{1})-G_{\Sigma\setminus \Lambda_{a}^{\Sigma}(z_{0})}(z_{1},z_{1})
\end{displaymath}
is the limit of differences of effective resistances in an approximation of $\Sigma$ by metric graphs.
Let $m$ be the value at $z_{0}$ of the harmonic extension of $h$ (with zero Neumann condition on 
$B_{3}(\Sigma)$). If $h$ is constant on $B_{1}(\Sigma)\cup B_{2}(\Sigma)$, then $m$ is that constant.
Corollary \ref{CorDistrib1ptandHitting} leads to the following.

\begin{conjecture}
\label{ConjGreenFunc}
The random variable  $
G_{\Sigma}(z_{0},z_{0})-G_{\Sigma\setminus \Lambda_{a}^{\Sigma}(z_{0})}(z_{0},z_{0})$ 
is distributed like the first hitting time of the level $a$ by a standard Brownian motion starting from
$m$.
\end{conjecture}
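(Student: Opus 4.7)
The plan is to obtain Conjecture \ref{ConjGreenFunc} as the scaling limit of its metric-graph counterpart in Corollary \ref{CorDistrib1ptandHitting}(i), using the general philosophy that has already guided the companion Conjectures \ref{ConjMetricRiemSurf} and \ref{ConjEL}. Concretely, I would discretise the bordered Riemann surface $\Sigma$ by a sequence of fine-mesh weighted graphs $\mathcal{G}^{\varepsilon}$ with metric-graph $\widetilde{\mathcal{G}}^{\varepsilon}$, choosing conductances so that (a) the reflected/killed Brownian motion on $\widetilde{\mathcal{G}}^{\varepsilon}$ (killed on the approximations $B_{1}^{\varepsilon}\cup B_{2}^{\varepsilon}$ of $B_{1}\cup B_{2}$ and instantaneously reflected on $B_{3}^{\varepsilon}$) converges to the corresponding process on $\Sigma$, (b) the metric-graph GFF $\tilde{\phi}^{\varepsilon}$ with boundary condition $h$ on $B_{1}^{\varepsilon}\cup B_{2}^{\varepsilon}$ and free on $B_{3}^{\varepsilon}$ converges to $\phi_{\Sigma}$, and (c) effective resistance differences $R^{\rm eff}(z_{0},A^{\varepsilon})-R^{\rm eff}(z_{0},K^{\varepsilon})$ converge to the corresponding differences of Green's functions $G_{\Sigma}(z_{0},z_{0})-G_{\Sigma\setminus K}(z_{0},z_{0})$ whenever $K^{\varepsilon}\to K$ in a suitable Hausdorff sense. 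The ingredients (a)--(c) are essentially classical consequences of the convergence of random walks on well-chosen graph approximations to the Brownian motion on $\Sigma$ and of the associated Dirichlet form and Green's function convergence.

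Once these approximations are in place, the main step is to construct, for each $\varepsilon$, the metric-graph first passage set $\widetilde{\Lambda}_{a}^{\varepsilon}(z_{0})$ starting from $B_{1}^{\varepsilon}\cup B_{2}^{\varepsilon}$, i.e.\ the set of points that can be joined to $B_{1}^{\varepsilon}\cup B_{2}^{\varepsilon}$ by a continuous path along which $\tilde{\phi}^{\varepsilon}\ge a$. This set plays the role of $\widetilde{\Lambda}_{a}$ in Corollary \ref{CorDistrib1ptandHitting} with $A=B_{1}^{\varepsilon}\cup B_{2}^{\varepsilon}$. Applying Corollary \ref{CorDistrib1ptandHitting}(i) directly on $\widetilde{\mathcal{G}}^{\varepsilon}$ yields that the random variable $R^{\rm eff}(z_{0},A^{\varepsilon})-R^{\rm eff}(z_{0},\widetilde{\Lambda}_{a}^{\varepsilon}(z_{0}))$ is distributed exactly as the first hitting time $T_{a}$ of level $a$ by a standard Brownian motion starting from $m^{\varepsilon}:=\mathbb{E}[\tilde{\phi}^{\varepsilon}(z_{0})]$, stopped at $R^{\rm eff}(z_{0},A^{\varepsilon})$. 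The final step is then simply to pass to the limit $\varepsilon\to 0$: by (c), the difference of resistances tends to $G_{\Sigma}(z_{0},z_{0})-G_{\Sigma\setminus \Lambda_{a}^{\Sigma}(z_{0})}(z_{0},z_{0})$; by the convergence of harmonic extensions, $m^{\varepsilon}\to m$; and $R^{\rm eff}(z_{0},A^{\varepsilon})\to +\infty$ (up to the divergent diagonal part of the Green's function), so that the truncation by $R^{\rm eff}(z_{0},A^{\varepsilon})$ becomes irrelevant and $T_{a}$ is the unrestricted first hitting time of $a$ from $m$.

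The hard part will be step three, namely justifying that $\widetilde{\Lambda}_{a}^{\varepsilon}(z_{0})$ converges (Hausdorff) to the continuum first passage set $\Lambda_{a}^{\Sigma}(z_{0})$, and that the associated effective resistances converge. In simply connected planar domains this is precisely the content of Lupu's convergence of metric-graph excursion clusters \cite{Lupu2015ConvCLE} together with the characterization of first passage sets by Aru--Sepúlveda--Werner \cite{AruSepulvedaWerner2016BoundedLocSets}. In the Riemann-surface setting with a Neumann piece $B_{3}(\Sigma)$, one has to extend both results. For the local-set part, I would use that $\Lambda_{a}^{\Sigma}(z_{0})$ is uniquely characterized by the minimality property in the statement of the conjecture, together with local absolute continuity of $\phi_{\Sigma}$ with respect to the zero-boundary GFF on small coordinate charts (which reduces the question of cluster convergence to the already-known simply connected case). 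For the Neumann part, one uses that reflection on $B_{3}^{\varepsilon}$ can be incorporated by a natural modification of the metric graph: one glues the Neumann boundary to itself or to an infinite reservoir, and the convergence of Brownian excursion measures away from $B_{1}\cup B_{2}$ then implies the desired convergence of effective conductances.

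Finally, once the set-convergence and resistance-convergence are established, the distributional identity at the metric-graph level is compatible with Skorokhod-type representations, so the law of $G_{\Sigma}(z_{0},z_{0})-G_{\Sigma\setminus \Lambda_{a}^{\Sigma}(z_{0})}(z_{0},z_{0})$ is the weak limit of the laws of $T_{a}$ started from $m^{\varepsilon}$, which is itself the law of $T_{a}$ started from $m$. This would establish the conjecture. The same overall scheme, with the obvious modifications (replacing Corollary \ref{CorDistrib1ptandHitting}(i) by Proposition \ref{ThmReffLocSet} in the first case, and by Proposition \ref{thmDist2Sets} in the second), would simultaneously establish Conjectures \ref{ConjMetricRiemSurf} and \ref{ConjEL}, with extremal distance playing the role of effective resistance via the limit identity $R^{\rm eff}(B_{1}^{\varepsilon},B_{2}^{\varepsilon})\to \operatorname{ED}(B_{1}(\Sigma),B_{2}(\Sigma))$.
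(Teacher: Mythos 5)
This statement is labelled a \emph{conjecture} in the paper and is not proved there: the authors only say that Corollary \ref{CorDistrib1ptandHitting} ``leads to'' it. There is therefore no proof in the paper to compare against. Your write-up is, in essence, an expansion of exactly the scaling-limit heuristic the authors have in mind: approximate $\Sigma$ by metric graphs, apply Corollary \ref{CorDistrib1ptandHitting}(i) with $A^{\varepsilon}$ approximating the Dirichlet boundary $B_1(\Sigma)\cup B_2(\Sigma)$, note that the effective-resistance difference converges to the Green's-function difference and that $R^{\rm eff}(z_0,A^{\varepsilon})\to+\infty$ so the truncation disappears, and pass to the limit.

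The reason the statement is a conjecture rather than a proposition, however, is precisely the unresolved gap you flag yourself: one needs (i) convergence of the metric-graph first passage sets $\widetilde{\Lambda}_a^{\varepsilon}$ (suitably filled in around $z_0$, since the continuum object $\Lambda_a^{\Sigma}(z_0)$ is required to have connected complement containing $z_0$) to $\Lambda_a^{\Sigma}(z_0)$, \emph{jointly} with the convergence of effective resistances to Green's functions; (ii) a treatment of the Neumann part $B_3(\Sigma)$, which is absent from all of the metric-graph results of Sections \ref{S2} and \ref{S3} (these take Dirichlet boundary conditions on $A$ throughout); and (iii) an extension of the convergence inputs \cite{Lupu2015ConvCLE,AruSepulvedaWerner2016BoundedLocSets}, which are proved for simply connected planar domains, to bordered Riemann surfaces. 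Your suggested workarounds for (ii) and (iii) (gluing the Neumann boundary, reducing to coordinate charts via absolute continuity) are plausible first ideas but are not closed arguments, and the paper does not carry out any of them. In short, your proposal correctly identifies the intended route and the remaining obstacles, but it is a programme rather than a proof, and thus does not go beyond what the paper itself offers as motivation for the conjecture.
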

\end {enumerate}

\section*{Appendix}

Here we provide some details about the rather brute-force It\^o formula computation of 
the semi-martingale decomposition of $\Psi_{\ell}(t)$ that proves Lemma \ref{LemMartingale}. We use the notation of the beginning of Section \ref{SecExplicit}.
To simplify the expressions, we will use the following notation:
\begin{displaymath}
r^{\ast}_{i}(t):=\hat{r}_{i}(t\wedge\tau_{\ell}),
\qquad
\tilde{\phi}^{\ast}_{i}(t):=\tilde{\phi}_{\hat{z}_{i}(\hat{r}_{i}(t\wedge \tau_{\ell}))},
\qquad
L^{\ast}(t):=L(t\wedge\tau_{\ell}),
\end{displaymath}
$$
C^{\ast}_{ij}(t):=C^{\rm eff}_{ij}(t\wedge\tau_{\ell}),
\qquad
C^{\ast}_{i\check{x}}(t):=C^{\rm eff}_{i\check{x}}(t\wedge\tau_{\ell}),
$$
\begin{displaymath}
S_{i\check{x}}(t):=\tilde{\phi}_{\hat{z}_{i}(\hat{r}_{i}(t\wedge \tau_{\ell}))}-h(\check{x}),
\qquad S^{+}_{i\check{x}}(t):=\vert\tilde{\phi}_{\hat{z}_{i}(\hat{r}_{i}(t\wedge \tau_{\ell}))}\vert +
\vert h(\check{x})\vert+\ell-L(t\wedge\tau_{\ell}),
\end{displaymath}
\begin{displaymath}
F(t)=\sum_{\substack{1\leq i\leq\hat{n}\\\check{x}\in\widecheck{A}}}
C^{\ast}_{i\check{x}}(t)(S^{+}_{i\check{x}}(t)^{2}-S_{i\check{x}}(t)^{2}).
\end{displaymath}
At each time $t$, the $\tilde{\phi}^{\ast}_{i}(t)$ for different values of $i$ have the same sign.
We will denote this common sign by $\sigma(t)\in \lbrace -1,+1\rbrace$. If all the 
$\tilde{\phi}^{\ast}_{i}(t)$ are zero, the value of $\sigma(t)$ does not matter.
Similarly, we will denote $\hbox {sgn}(h ( \widecheck A))\in \lbrace -1, +1\rbrace$ the common sign of $h$ on $\widecheck{A}$. If $h$ vanishes on $\widecheck{A}$, the value of the sign does not matter.

According to Lemma \ref{LemSDE}, $\tilde{\phi}^{\ast}_{i}(t)$ has the following semi-martingale decomposition:
$$ M^{\ast}_{i}(t)+
\sum_{j\neq i}\int_{0}^{t}
C^{\ast}_{ij}(s)(\tilde{\phi}^{\ast}_{j}(s)-\tilde{\phi}^{\ast}_{i}(s)) dr^{\ast}_{i}(s)
+\sum_{\check{x}\in\widecheck{A}}\int_{0}^{t}
C^{\ast}_{i\check{x}}(s)(h(\check{x})-\tilde{\phi}^{\ast}_{i}(s)) dr^{\ast}_{i}(s),
$$
where $M^{\ast}_{i}(t)$ is an $(\widehat{\mathcal{F}}_{t})_{t\geq 0}$-martingale. Moreover,
$\langle M_{i}^{\ast},M_{i}^{\ast}\rangle_{t}=r^{\ast}_{i}(t)$ and $
\langle M_{i}^{\ast},M_{j}^{\ast}\rangle_{t}=0$ for $i\neq j$.
Further, $d S_{i\check{x}}(t)= d \tilde{\phi}^{\ast}_{i}(t)$,
$d S_{i\check{x}}^{+}(t)=  \sigma(t) d\tilde{\phi}^{\ast}_{i}(t)$,
and
$d\langle S_{i\check{x}},S_{i\check{x}}\rangle_{t}=
d\langle S^{+}_{i\check{x}},S^{+}_{i\check{x}}\rangle_{t}
= d r^{\ast}_{i}(t).$
According to Lemma \ref{LemDerivPartFunc}, Equations \eqref{EqDerCond} and \eqref{EqDerCond2},
\begin{displaymath}
d C^{\ast}_{i\check{x}}(t) = C^{\ast}_{i\check{x}}(t)\sum_{j\neq i}C^{\ast}_{ij}(t) d r^{\ast}_{i}(t)
+C^{\ast}_{i\check{x}}(t)\sum_{\check{x}'\in\widecheck{A}}C^{\ast}_{i\check{x}'}(t) d r^{\ast}_{i}(t)
-\sum_{j\neq i}C^{\ast}_{ij}(t)C^{\ast}_{j\check{x}}(t) d r^{\ast}_{j}(t).
\end{displaymath}

Using It\^o's formula formula we get that
\begin{eqnarray*}
&&d(S^{+}_{i\check{x}}(t)^{2}-S_{i\check{x}}(t)^{2})=
2S^{+}_{i\check{x}}(t) dS^{+}_{i\check{x}}(t)+ d\langle S^{+}_{i\check{x}},S^{+}_{i\check{x}}\rangle_{t}
-2S_{i\check{x}}(t) dS_{i\check{x}}(t)
- d\langle S_{i\check{x}},S_{i\check{x}}\rangle_{t}
\\
&&=
2(\sigma(t)S^{+}_{i\check{x}}(t)-S_{i\check{x}}(t)) d\tilde{\phi}^{\ast}_{i}(t)=
2\theta_{\check{x}}(t)d\tilde{\phi}^{\ast}_{i}(t)
\\
&&=
2\theta_{\check{x}}(t)\bigg(
d M^{\ast}_{i}(t)+
\sum_{j\neq i}
C^{\ast}_{ij}(t)(\tilde{\phi}^{\ast}_{j}(t)-\tilde{\phi}^{\ast}_{i}(t)) dr^{\ast}_{i}(t)
+\sum_{\check{x}'\in\widecheck{A}}
C^{\ast}_{i\check{x}'}(t)(h(\check{x}')-\tilde{\phi}^{\ast}_{i}(t)) dr^{\ast}_{i}(t)
\bigg),
\end{eqnarray*}
where $\theta_{\check{x}}(t)=(\sigma(t)\vert h(\check{x})\vert + h(\check{x})+\sigma(t)(\ell-L^{\ast}(t)))$.
Further
\begin{eqnarray*}
d F(t)&=&
%d\sum_{\substack{1\leq i\leq\hat{n}\\\check{x}\in\widecheck{A}}}
%(C^{\ast}_{i\check{x}}(t)(S^{+}_{i\check{x}}(t)^{2}-S_{i\check{x}}(t)^{2}))
%\\&=&
\sum_{\substack{1\leq i\leq\hat{n}\\\check{x}\in\widecheck{A}}}(
2\theta_{\check{x}}(t)C^{\ast}_{i\check{x}}(t)d\tilde{\phi}^{\ast}_{i}(t)+
(S^{+}_{i\check{x}}(t)^{2}-S_{i\check{x}}(t)^{2})
dC^{\ast}_{i\check{x}}(t))
\\&=&
2\sum_{\substack{1\leq i\leq\hat{n}\\\check{x}\in\widecheck{A}}}
\theta_{\check{x}}(t)C^{\ast}_{i\check{x}}(t)d\tilde{\phi}^{\ast}_{i}(t)
+\sum_{\substack{i\neq j\\\check{x}\in\widecheck{A}}}
C^{\ast}_{i\check{x}}(t)C^{\ast}_{ij}(t)
(S^{+}_{i\check{x}}(t)^{2}-S_{i\check{x}}(t)^{2}-S^{+}_{j\check{x}}(t)^{2}+S_{j\check{x}}(t)^{2})
dr_{i}^{\ast}(t)
\\&&+
\sum_{\substack{1\leq i\leq\hat{n}\\\check{x},\check{x}'\in\widecheck{A}}}
(S^{+}_{i\check{x}}(t)^{2}-S_{i\check{x}}(t)^{2})
C^{\ast}_{i\check{x}}(t)C^{\ast}_{i\check{x}'}(t)dr_{i}^{\ast}(t)
\\&=&
2\sum_{\substack{1\leq i\leq\hat{n}\\\check{x}\in\widecheck{A}}}
\theta_{\check{x}}(t)C^{\ast}_{i\check{x}}(t)d\tilde{\phi}^{\ast}_{i}(t)
-2\sum_{\substack{i\neq j\\\check{x}\in\widecheck{A}}}\theta_{\check{x}}(t)
C^{\ast}_{i\check{x}}(t)C^{\ast}_{ij}(t)(\tilde{\phi}_{j}(t)-\tilde{\phi}_{i}(t))d r^{\ast}_{i}(t)
\\&&+
\sum_{\substack{1\leq i\leq\hat{n}\\\check{x},\check{x}'\in\widecheck{A}}}
(\theta_{\check{x}}(t)^{2}+2\theta_{\check{x}}(t)(\tilde{\phi}_{i}(t)-h(\check{x})))
C^{\ast}_{i\check{x}}(t)C^{\ast}_{i\check{x}'}(t)dr_{i}^{\ast}(t)
\\&=&
2\sum_{\substack{1\leq i\leq\hat{n}\\\check{x}\in\widecheck{A}}}
\theta_{\check{x}}(t)C^{\ast}_{i\check{x}}(t) dM_{i}^{\ast}(t)
+\sum_{\substack{1\leq i\leq\hat{n}\\\check{x},\check{x}'\in\widecheck{A}}}
\theta_{\check{x}}(t)^{2}C^{\ast}_{i\check{x}}(t)C^{\ast}_{i\check{x}'}(t)dr_{i}^{\ast}(t)
\\&&+
2\sum_{\substack{1\leq i\leq\hat{n}\\\check{x},\check{x}'\in\widecheck{A}}}
\theta_{\check{x}}(t)(h(\check{x}')-h(\check{x}))
C^{\ast}_{i\check{x}}(t)C^{\ast}_{i\check{x}'}(t)dr_{i}^{\ast}(t)
\\&=&
2\sum_{\substack{1\leq i\leq\hat{n}\\\check{x}\in\widecheck{A}}}
\theta_{\check{x}}(t)C^{\ast}_{i\check{x}}(t) dM_{i}^{\ast}(t)
+\sum_{\substack{1\leq i\leq\hat{n}\\\check{x},\check{x}'\in\widecheck{A}}}
\theta_{\check{x}}(t)^{2}C^{\ast}_{i\check{x}}(t)C^{\ast}_{i\check{x}'}(t)dr_{i}^{\ast}(t)
\\&&-(1+\sigma(t)\hbox{sgn}(h ( \widecheck A)))
\sum_{\substack{1\leq i\leq\hat{n}\\\check{x},\check{x}'\in\widecheck{A}}}
(h(\check{x}')-h(\check{x}))^{2}
C^{\ast}_{i\check{x}}(t)C^{\ast}_{i\check{x}'}(t)dr_{i}^{\ast}(t),
\end{eqnarray*}

\begin{eqnarray*}
d \langle F,F\rangle_{t}&=&
4\sum_{\substack{1\leq i\leq\hat{n}\\\check{x},\check{x}'\in\widecheck{A}}}
\theta_{\check{x}}(t)\theta_{\check{x}'}(t)C^{\ast}_{i\check{x}}(t)C^{\ast}_{i\check{x}'}(t)dr_{i}^{\ast}(t)
\\&=&
4\sum_{\substack{1\leq i\leq\hat{n}\\\check{x},\check{x}'\in\widecheck{A}}}
\theta_{\check{x}}(t)^{2}C^{\ast}_{i\check{x}}(t)C^{\ast}_{i\check{x}'}(t)dr_{i}^{\ast}(t)
+
4\sum_{\substack{1\leq i\leq\hat{n}\\\check{x},\check{x}'\in\widecheck{A}}}
\theta_{\check{x}}(t)(\theta_{\check{x}'}(t)-\theta_{\check{x}}(t))
C^{\ast}_{i\check{x}}(t)C^{\ast}_{i\check{x}'}(t)dr_{i}^{\ast}(t)
\\&=&
4\sum_{\substack{1\leq i\leq\hat{n}\\\check{x},\check{x}'\in\widecheck{A}}}
\theta_{\check{x}}(t)^{2}C^{\ast}_{i\check{x}}(t)C^{\ast}_{i\check{x}'}(t)dr_{i}^{\ast}(t)
-
2\sum_{\substack{1\leq i\leq\hat{n}\\\check{x},\check{x}'\in\widecheck{A}}}
(\theta_{\check{x}'}(t)-\theta_{\check{x}}(t))^{2}
C^{\ast}_{i\check{x}}(t)C^{\ast}_{i\check{x}'}(t)dr_{i}^{\ast}(t)
\\&=&
4\sum_{\substack{1\leq i\leq\hat{n}\\\check{x},\check{x}'\in\widecheck{A}}}
\theta_{\check{x}}(t)^{2}C^{\ast}_{i\check{x}}(t)C^{\ast}_{i\check{x}'}(t)dr_{i}^{\ast}(t)
\\&&-4(1+\sigma(t)\hbox{sgn}(h ( \widecheck A)))
\sum_{\substack{1\leq i\leq\hat{n}\\\check{x},\check{x}'\in\widecheck{A}}}
(h(\check{x}')-h(\check{x}))^{2}
C^{\ast}_{i\check{x}}(t)C^{\ast}_{i\check{x}'}(t)dr_{i}^{\ast}(t).
\end{eqnarray*}

Finally, 
\begin{displaymath}
\dfrac{d\Psi_{\ell}(t)}{\Psi_{\ell}(t)} =
-\dfrac{1}{2} dF(t)+\dfrac{1}{8} d \langle F,F\rangle_{t}
= -\sum_{\substack{1\leq i\leq\hat{n}\\\check{x}\in\widecheck{A}}}
\theta_{\check{x}}(t)C^{\ast}_{i\check{x}}(t) dM_{i}^{\ast}(t).
\end{displaymath}
It follows that $\Psi_{\ell}(t)$ is a local martingale.

\section*{Acknowledgements}

TL acknowledges the support of Dr. Max Rössler, the Walter Haefner
Foundation and the ETH Zurich Foundation. WW acknowledges the support of the SNF grant
SNF-155922. The authors are also part of the NCCR Swissmap of the SNF. They also thank the referees for their comments. 

\bibliographystyle{plain}

\end{document}